\newcommand{\cP}{{\mathcal{P}}}
\newcommand{\Ps}{{\mathbf{P}}}
\newcommand{\Z}{{\mathbf{Z}}}
\newcommand{\C}{{\mathbf{C}}}
\newcommand{\Q}{{\mathbf{Q}}}
\newcommand{\cQ}{{\mathcal{Q}}}
\renewcommand{\phi}{\varphi}
    \newtheorem{lemma}{Lemma}[section]
    \newtheorem{proposition}[lemma]{Proposition}
    \newtheorem{theorem}[lemma]{Theorem}
    \newtheorem{corollary}[lemma]{Corollary}
   \theoremstyle{definition}
    \newtheorem{definition}[lemma]{Definition}
    \newtheorem{notation}[lemma]{Notation}
    \newtheorem{example}[lemma]{Example}
    \newtheorem{remark}[lemma]{Remark}
    \DeclareMathOperator{\rank}{rank}
\DeclareMathOperator{\sing}{{sing}}
\DeclareMathOperator{\prim}{{prim}}
\DeclareMathOperator{\red}{{red}}
\DeclareMathOperator{\tor}{{tor}}
\DeclareMathOperator{\MW}{{MW}}
\DeclareMathOperator{\NS}{{NS}}
\DeclareMathOperator{\End}{{End}}
\DeclareMathOperator{\coker}{{coker}}
\DeclareMathOperator{\Gr}{{Gr}}
\DeclareMathOperator{\spa}{{span}}
\begin{document}
\title[Elliptic threefolds with constant $j$-invariant]{On the classification of degree 1 elliptic threefolds with constant $j$-invariant}
\author{Remke Kloosterman}
\address{Institut f\"ur Al\-ge\-bra\-ische Geo\-me\-trie, Leibniz Universit\"at
Hannover, Wel\-fen\-gar\-ten 1, 30167 Hannover, Germany}
\curraddr{Institut f\"ur Mathematik, Humboldt Universit\"at zu Berlin, Under den Linden 6, 10099 Berlin, Germany}

\email{klooster@math.hu-berlin.de}

\begin{abstract} We describe the possible Mordell-Weil groups for degree 1 elliptic threefold with rational base and constant $j$-invariant.
Moreover,  we classify all such elliptic threefolds if the $j$-invariant is nonzero.
We can use this classification to describe a class of singular hypersurfaces in $\Ps(2,3,1,1,1)$ that admit no variation of Hodge structure. (Remark~\ref{TorRem}.)
\end{abstract}
\subjclass{}
\keywords{}
\date{\today}
\thanks{The author would like to thank Matthias Sch\"utt and Orsola Tommasi for providing comments on a previous version of this paper. The author would like to thank an anonymous referee for pointing out a mistake in a previous version of the proof of Proposition~\ref{prpSpec}}
\maketitle

\section{Introduction}\label{secInt}
In  this paper we work over the field $\C$ of complex numbers.
Let $\pi: X\to B$ be an elliptic threefold with a (fixed) section $\sigma_0:B \to X$, such that $B$ is a rational surface. Assume that $X$ is not birationally equivalent to a product $E\times B$, with $E$ an elliptic curve.

Fix now a Weierstrass equation for the generic fiber of $\pi$. As explained in Section~\ref{secPrelim} this establishes a degree $6k$ hypersurface $Y\subset \Ps(2k,3k,1,1,1)$  that is birational to $X$ and such that the fibration $\pi$ is birationally equivalent to projection from $(1:1:0:0:0)$ onto a plane.

This integer $k$ is not unique. We call the minimal possible $k$ for which such an $Y$ exists the \emph{degree} of $\pi:X\to B$. One can easily show that if $X$ is a rational threefold then the degree equals 1 or 2, and that if $X$ is Calabi-Yau then the degree equals 3.

For a general point $p\in B$ we can calculate the $j$-invariant of the elliptic curve $\pi^{-1}(p)$. This yields a rational function $j(\pi): B\dashrightarrow \Ps^1$.

In this paper we study elliptic threefolds of degree 1 with rational base and constant $j$-invariant.
We would like to classify all such possible threefolds. The two invariants that interest us  are the configuration of singular fibers of $\pi$ and  the structure of the Mordell-Weil group $\MW(\pi)$, the group of rational sections of $\pi$. 
The actual classification we are aiming at  in this paper is a classification of possible singular loci of irreducible and reduced degree $6$ threefolds $Y$ in $\Ps(2,3,1,1,1)$ together with the possibilities for $\MW(\pi)$. In \cite{MirEllThree} it is explained how to obtain an elliptic threefold $X$ from $Y$.

One way of constructing elliptic threefolds is taking a cone $Y$ over an elliptic surface $S\subset \Ps(2,3,1,1) \subset \Ps(2,3,1,1,1)$. The Mordell-Weil group and the configuration of singular fibers can be obtained  from $S$. All possibilities for such $S$ have already been classified by Oguiso and Shioda  \cite{OS}.
We refer to such $Y$ as `obtained by the cone construction'. We exclude such $Y$ from our classification.
One can show that $Y$ is a cone over an elliptic surface if and only if the discriminant curve is a union of lines through one point.

We split our considerations in three cases, namely the general one $j(\pi)\neq0,1728$, and two special cases $j(\pi)=1728$ and $j(\pi)=0$.

The case $j(\pi)\neq 0,1728$ is the easiest one. In this case it is well-known  that $Y$ is given by
\[ y^2=x^3+AP^2x+BP^3\]
with $A,B\in\C$ and $P\in \C[z_0,z_1,z_2]_2$, i.e., $P$ is homogeneous of degree 2. Our assumptions on $Y$ imply that $P=0$ is a smooth conic. It turns out that in this case $\MW(\pi)\cong(\Z/2\Z)^2$.

The exceptional cases $j(\pi)=0,1728$ are more interesting. In these cases one has an equation of the form
\[ y^2=x^3+R, \mbox{ resp. } y^2=x^3+Qx  \]
with $Q\in\C[z_0,z_1,z_2]_{4}$ and $R\in\C[z_0,z_1,z_2]_{6}$. 

To calculate the group $\MW(\pi)$ we use the results of \cite{ell3HK}. It turns out that $\MW(\pi)$ is determined by the type of singularities and the configuration of singular points of $Q=0$, resp., $R=0$.

More precisely, the main result of \cite{ell3HK} states that $\MW(\pi)$ is isomorphic to the group of Weil divisors on $Y$ modulo the Cartier divisors on $Y$. In our case this can be reformulated as
\[ \rank \MW(\pi)=h^4(Y)_{\prim}=\dim \coker \left( F^{2} H^4(\Ps\setminus Y,\C) \to \oplus_{p\in \cP} H^4_p(Y,\C) \right) ,\]
where $\cP$ consists of the points $\{x=y=Q=0\}_{\sing}$, respectively, $\{x=y=R=0\}_{\sing}$.

The Poincar\'e residue map yields a natural surjection from  $\C[z_0,z_1,z_2]_2 x \oplus\C[z_0,z_1,z_2]_4$ onto $ F^{2} H^4(\Ps\setminus Y,\C)$. To determine $H^4_p(Y,\C)$ we use three methods. Let $p\in \cP$.
\begin{enumerate}
\item If $(Y,p)$ is an isolated singularity and is semi-weighted homogeneous, then there is a method of Dimca to compute an explicit basis for $H^4_p(Y,\C)$, together with the Hodge filtration.
\item If $(Y,p)$ is not weighted homogeneous, but  $(Y,p)$ is isolated, then there is a classical method of Brieskorn \cite{BrieMon} to calculate $H^4_p(Y)$. This method does not produce the Hodge filtration, and in the weight homogeneous case it is more complicated than  Dimca's method. 

This method is implemented in the computer algebra package Singular. Since this case is rather exceptional, we preferred to calculate $H^4_p(Y,\C)$ by using Singular. Hence several of the results in the sequel are only valid up to the correct implementation of Brieskorn's method in Singular.
\item If $(Y,p)$ is a non-isolated singularity, but is weighted homogeneous, then the transversal type is an $ADE$-surface singularity. To calculate $H^4_p(Y,\C)$ we apply a generalization of Dimca's method, due to Hulek and the author \cite{ell3HK}.
\end{enumerate}

We list now the possible groups:

\begin{theorem}\label{MWGrpThm}
Suppose $Y \subset \Ps(2,3,1,1,1)$ is a degree $6$ hypersurface, corresponding to an elliptic threefold $\pi:X \to B$, not obtained by the cone construction and not birational to a product $E\times B$. Then $\MW(\pi)$ is one of the following
\begin{itemize}
\item $(\Z/2\Z)^2$ if $j(\pi)\neq 0,1728$.
\item $(\Z/2\Z),(\Z/2\Z)^2$ or $(\Z/2\Z)\times \Z^2$ if $j(\pi)=1728$.
\item $0,\Z/3\Z, (\Z/2\Z)^2, \Z^2, \Z^4,\Z^6$ if $j(\pi)=0$.
\end{itemize}
\end{theorem}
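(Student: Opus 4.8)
The plan is to split the analysis into the three cases $j(\pi)\neq 0,1728$, $j(\pi)=1728$, and $j(\pi)=0$, exactly as the introduction indicates, and to reduce each case to a finite combinatorial problem about the singularities of the relevant plane curve ($P=0$ of degree $2$, $Q=0$ of degree $4$, or $R=0$ of degree $6$). For the first case I would simply invoke the normal form $y^2=x^3+AP^2x+BP^3$ with $P=0$ a smooth conic (the non-degeneracy coming from irreducibility of $Y$ and the exclusion of the cone construction), observe that the threefold $Y$ has no isolated or non-isolated singularities contributing to the Mordell–Weil rank, so $\rank\MW(\pi)=0$, and then exhibit the three obvious $2$-torsion sections coming from the factorization of $x^3+AP^2x+BP^3$ over a quadratic extension of the function field — giving $\MW(\pi)\cong(\Z/2\Z)^2$. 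The torsion-free statement here should follow from the $h^4(Y)_{\prim}=0$ computation.

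For $j(\pi)=1728$ and $j(\pi)=0$ the heart of the argument is the formula $\rank\MW(\pi)=h^4(Y)_{\prim}=\dim\coker\bigl(F^2H^4(\Ps\setminus Y,\C)\to\oplus_{p\in\cP}H^4_p(Y,\C)\bigr)$ from \cite{ell3HK}. I would first classify, up to projective equivalence, the possible singular configurations of the quartic $Q=0$ (resp. sextic $R=0$) subject to the constraints that $Y$ be irreducible and reduced, not a cone (i.e. the discriminant is not a union of concurrent lines), and not birational to $E\times B$. Since we are in $\Ps^2$ and the degree is small, this is a finite classification: the allowed singularities of $Q=0$ (resp. $R=0$) translate into the allowed singularity types of $Y$ along the locus $x=y=Q=0$ (resp. $x=y=R=0$), which in the weighted-homogeneous transversal case are $ADE$ surface singularities. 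For each configuration I would compute the local cohomology groups $H^4_p(Y,\C)$ with their Hodge filtration using the three methods itemized above (Dimca's method for semi-weighted-homogeneous isolated singularities, Brieskorn's method via Singular for the non-weighted-homogeneous isolated ones, and the Hulek–Kloosterman generalization for the non-isolated weighted-homogeneous ones), compute the image of the residue map from $\C[z_0,z_1,z_2]_2\,x\oplus\C[z_0,z_1,z_2]_4$, and read off the cokernel dimension; the torsion part of $\MW(\pi)$ is then pinned down separately by exhibiting explicit torsion sections (e.g. the $3$-torsion section $(0,\sqrt{R})$ when $R$ is a perfect square, the $2$-torsion when $Q$ splits appropriately) and by a global argument bounding the torsion.

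The main obstacle I expect is the classification step combined with keeping the local computations honest: one must be sure the list of admissible singular configurations of $Q=0$ and $R=0$ is complete, which requires ruling out configurations that force $Y$ to be a cone, reducible, non-reduced, or a product — this is where the excluded cases must be carved out carefully — and then, for each surviving configuration, the non-weighted-homogeneous cases (where only Brieskorn's method via Singular is available) are the delicate ones, since the answer there depends on the correctness of the implementation, as the introduction already flags. A secondary subtlety is that Dimca's and the Hulek–Kloosterman methods give $H^4_p$ together with its Hodge filtration, so one can directly compute the cokernel of the $F^2$-map, but for the Brieskorn cases one only gets $H^4_p$ as a vector space and must argue separately (e.g. by semicontinuity or by comparison with a nearby weighted-homogeneous degeneration) that the $F^2$-part has the expected dimension. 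Once all configurations are tabulated, reading off the list $(\Z/2\Z)$, $(\Z/2\Z)^2$, $(\Z/2\Z)\times\Z^2$ for $j=1728$ and $0,\Z/3\Z,(\Z/2\Z)^2,\Z^2,\Z^4,\Z^6$ for $j=0$ is then a matter of collecting the outputs, and the absence of any other group follows from the finiteness of the classification.
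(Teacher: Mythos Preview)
Your plan to classify all singular configurations of the plane curve and compute $H^4_p(Y)$ for each is how the paper handles \emph{existence} of the listed groups and some of the finer rank statements, but it is not how the paper obtains the \emph{upper bounds} that make up the bulk of Theorem~\ref{MWGrpThm}. The paper's key device, which you do not mention, is specialization to a line: for a very general line $\ell\subset\Ps^2$ the restriction $\MW(\pi)\to\MW(\pi_\ell)$ is injective (Proposition~\ref{prpSpec}), and since $\pi_\ell$ is a rational elliptic surface with the same constant $j$-invariant, the Oguiso--Shioda tables (Propositions~\ref{prpSur0}--\ref{propSurGen}) immediately bound both the rank \emph{and} the torsion of $\MW(\pi)$. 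This single lemma gives Corollaries~\ref{corMWjGen}--\ref{corMWj0} directly, with no local cohomology needed. The $h^4_p$ machinery is then invoked only to (i) produce explicit examples realizing each group, and (ii) eliminate the few extremal survivors $\Z^8$, $(\Z/2\Z)\times\Z^4$, and $(\Z/3\Z)\times\Z^2$, each of which forces the curve to be a union of lines or a double cubic, where a handful of targeted $H^4_p$ computations suffice (Section~\ref{SecPos}).

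Your proposal has two concrete gaps relative to this. First, your ``global argument bounding the torsion'' is unspecified, and the formula $\rank\MW(\pi)=h^4(Y)_{\prim}$ sees only the rank; without specialization you have no mechanism to exclude, say, $\Z/5\Z$ or $\Z/4\Z$ as torsion subgroups. Second, for $j(\pi)=0$ the exhaustive classification of sextic configurations plus cokernel computations that you propose is precisely the program the paper declares too large to carry out in full (see the introduction), and the proof of Theorem~\ref{MWGrpThm} does not depend on completing it. Even in the easy case $j(\pi)\neq0,1728$, your claim that $Y$ ``has no singularities contributing to the Mordell--Weil rank'' would require computing $H^4_p$ along the non-isolated singular curve $\{x=y=P=0\}$; the paper bypasses this entirely via the specialization injection into $\MW(\pi_\ell)\cong(\Z/2\Z)^2$.
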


In the case $j(\pi)=1728$ we get a complete classification:
\begin{theorem}
Suppose $Y$ satisfies the conditions of the previous theorem, and suppose that $j(\pi)=1728$.

We have that $\MW(\pi)\cong (\Z/2\Z)^2$ if and only if $Q=0$ defines a double conic and $\MW(\pi)\cong \Z/2\Z\times \Z^2$ if and only if $Q=0$ is the unique quartic with two $A_3$ singularities.
\end{theorem}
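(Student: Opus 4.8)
The plan is to pin down the torsion of $\MW(\pi)$ by an elementary argument and to reduce the rank to a finite verification. Since the Weierstrass equation reads $y^2=x(x^2+Q)$, the section $x=y=0$ is always a $2$-torsion section, and $\MW(\pi)$ contains the full $2$-torsion $(\Z/2\Z)^2$ precisely when $x^2+Q$ splits over $K:=\C(z_0/z_2,z_1/z_2)$, i.e.\ precisely when $-Q\in(K^*)^2$, i.e.\ precisely when $Q=-P^2$ for some $P\in\C[z_0,z_1,z_2]_2$ — that is, when $Q=0$ is a double conic. Of the three groups permitted by Theorem~\ref{MWGrpThm}, only $(\Z/2\Z)^2$ has $2$-torsion subgroup strictly larger than $\Z/2\Z$, so $\MW(\pi)\cong(\Z/2\Z)^2$ if and only if $Q=0$ is a double conic. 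Moreover, at $j=1728$ the automorphism $\iota\colon(x,y)\mapsto(-x,\sqrt{-1}\,y)$ of $Y$ satisfies $\iota^2=[-1]$ and hence acts on $\MW(\pi)\otimes\R$ with $\iota^2=-\mathrm{id}$, so $s$ and $\iota(s)$ are $\R$-linearly independent for every non-torsion $s$; thus a single non-torsion section already forces $\rank\MW(\pi)\ge2$, hence $\rank\MW(\pi)=2$ and $\MW(\pi)\cong\Z/2\Z\times\Z^2$ by Theorem~\ref{MWGrpThm}. It therefore remains to decide, for $Q=0$ not a double conic, exactly when a non-torsion section exists; the claim is that this happens precisely for the quartics $Q=0$ with two $A_3$ singularities.

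For such a $Q$ I would produce the section by hand. Up to $\PGL_3$ the quartic with two $A_3$ points is a union $C_1C_2$ of two smooth conics tangent to each other at the two points of $C_1\cap C_2$; equivalently, the pencil $\langle C_1,C_2\rangle$ contains a double line. After rescaling $C_1$ and $C_2$ and scaling $Q$ (which does not change $\MW(\pi)$) we may assume $Q=C_1C_2$ and $C_1-C_2=m^2$ for a linear form $m$; one also checks $Q$ is of neither cone nor product type. Putting $x=\sqrt{-1}\,C_1$ then gives $x^3+Qx=x(x^2+Q)=-\sqrt{-1}\,C_1^2(C_1-C_2)=-\sqrt{-1}\,(C_1m)^2$, a perfect square, hence a non-torsion section, and the previous paragraph yields $\MW(\pi)\cong\Z/2\Z\times\Z^2$. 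An analogous construction handles the relevant degenerations of this pencil.

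The remaining — and main — task is to show that every other admissible $Q$ carries no non-torsion section, i.e.\ $\rank\MW(\pi)=0$. Here I would use the residue formula from the introduction, $\rank\MW(\pi)=\dim\coker\bigl(F^2H^4(\Ps\setminus Y,\C)\to\bigoplus_{p\in\cP}H^4_p(Y,\C)\bigr)$, with $\cP$ the singular locus of $\{x=y=Q=0\}$, the source a quotient of $\C[z_0,z_1,z_2]_2\,x\oplus\C[z_0,z_1,z_2]_4$ via the Poincar\'e residue, and each $H^4_p(Y,\C)$ governed by the local type of $Y$ along $\{x=y=Q=0\}$ near $p$ — which over an $A_n$, $D_n$ or $E_n$ point of the plane quartic $Q=0$ is the corresponding transversal type, computable by Dimca's method, by its generalisation from \cite{ell3HK} in the non-isolated case, or by Brieskorn's method via Singular in the rare isolated non-weighted-homogeneous cases. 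Plane quartics have only finitely many singularity configurations; I would enumerate them, discard those excluded by hypothesis — the cone ones, where $\{Q=0\}$ is supported on lines through a common point, including the non-reduced degenerations — and for each of the rest compute $\dim\coker$. Theorem~\ref{MWGrpThm} is the crucial lever, pinning this dimension to $0$ or $2$, so whenever the total local contribution $\bigoplus_p H^4_p(Y,\C)$ is small the cokernel vanishes at once. The main obstacle is the delicate remainder of this finite analysis: organising the enumeration so that no configuration is overlooked (especially among the non-reduced quartics), and — since the rank of $\MW(\pi)$ is not semicontinuous under degeneration of $Q$, so that acquiring extra singularities can destroy as readily as create a section — pinning down exactly which configurations give $\dim\coker=2$, the assertion being that these are precisely the quartics with two $A_3$ points together with their relevant degenerations, all others giving $\dim\coker=0$.
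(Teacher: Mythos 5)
Your torsion argument is correct and in one respect cleaner than the paper's: the paper derives ``$\#\MW(\pi)_{\tor}>2$ implies double conic'' by specializing to a general line and invoking the table of Proposition~\ref{prpSur1728}, whereas you read it off directly from the splitting of $x^2+Q$ over $\C(B)$. Your existence argument for the rank-two case is essentially the paper's as well (the paper normalizes the quartic to $z_0^4-z_1^2z_2^2$ and exhibits $x=z_0^2$, $y=z_0z_1z_2$; your $x=\sqrt{-1}\,C_1$ is the same section written in pencil coordinates). One caution: you invoke Theorem~\ref{MWGrpThm} twice to cap the rank at $2$, but the $j=1728$ clause of that theorem is itself established by the classification you are proving; to avoid circularity you should instead quote Corollary~\ref{corMWj1728} together with Lemma~\ref{lemj1728lines} (rank at most $2$ once $C$ is not a union of lines), or argue as the paper does that $h^4_\Sigma(Y)=4$ and surjectivity onto a local summand already force the cokernel to have even dimension at most $2$.

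The genuine gap is the converse --- that no other admissible $Q$ has positive rank --- which you correctly identify as the main task but then leave as a plan rather than a proof. The paper closes it with three specific inputs that your outline does not supply: (i) the local computations of Section~\ref{secCalc}, which show that for $j(\pi)=1728$ the only quartic singularities with $H^4_p(Y)\neq 0$ are the isolated ones of type $A_3$, $A_7$, $D_7$ --- in particular every non-isolated (non-reduced) quartic singularity contributes nothing, so the non-reduced enumeration you worry about is vacuous; (ii) Proposition~\ref{prpSurj}, by which $H^4(\Ps\setminus Y)\to H^4_p(Y)$ is surjective for each single point $p$, so that a nonzero cokernel requires at least \emph{two} contributing points --- this, not ``the total local contribution being small,'' is the mechanism that kills the rank; and (iii) the bound $\sum\mu_i\le 8$ coming from embedding the Milnor lattices into $\tilde{E}_7$, which rules out every two-point combination from $\{A_3,A_7,D_7\}$ except $2A_3$. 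Without (i)--(iii) your ``finite verification'' has no organizing principle, and your hedge about ``relevant degenerations'' stays unresolved. That hedge does point at something real: the configuration $2A_3+A_1$ (a conic with two tangent lines) also passes (i)--(iii) and carries a visible non-torsion section, so the word ``unique'' in the statement must be read with care --- the paper's own converse only concludes that $C$ carries two $A_3$ points --- but settling this requires actually carrying out the local-cohomology and surjectivity analysis, which your proposal defers.
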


For $j(\pi)=0$ the number of cases to consider is  quite large. One should apply the following program:
\begin{enumerate}
\item Determine all possible types of singularities of sextic curves. This is done in \cite{SexticClass}.
\item For each type of singularity, determine $H^4_p(Y)$.
\item Determine which combinations of singularities are possible on a sextic curve. Here one might restrict oneself to combinations of singularities that yield non-trivial $H^4_p(Y)$.
\item For each configuration, study the relation between $h^4(Y)$ and the position of the singularities.
\end{enumerate}

The second point is completely done in this paper, except for six types of singularities that are both not weighted homogeneous and not isolated. The number of cases to consider at the third and fourth point is quite big. We restrict ourselves to the case where the sextic is non-reduced, and the case where the sextic has ordinary cusps. (It turns out that if the sextic has a node at $p$ then $H^4_p(Y)$ vanishes, for this  reason we study sextic with cusps.)

The curves with only cusps as singularities yield examples for the  groups $0,\Z^2,\Z^4$ and $\Z^6$.
One can show that $\MW(\pi)=(\Z/2\Z)^2$ if and only if $R$ defines a triple conic, and $\MW(\pi)=\Z/3\Z$ if and only if $R$ defines a double cubic. This suffices to provide examples for each of the groups mentioned in 
Theorem~\ref{MWGrpThm}.

In Section~\ref{secPrelim} we recall several standard facts concerning elliptic threefolds. In particular we construct our model $Y$. In Section~\ref{secMWpos} we limit the possibilities for the group $\MW(\pi)$. This is done by studying the behaviour of $\MW(\pi)$ under specialization and considering the classification of rational elliptic surfaces \cite{OS}. In Section~\ref{secSing} we discuss the possible singularities for quartic and sextic plane curves. This yields a classification of possible singularities on $Y$. In Section~\ref{secCalc} we calculate the local cohomology $H^4_p(Y)$ for each possible type of singularity on $Y$. In Section~\ref{secMWrank} we give some details on how to calculate $\rank \MW(\pi)$. In the following three sections we give a classification for the cases $j(\pi)=1728$, $j(\pi)=0$ and $R=0$ is non-reduced and $j(\pi)=0$ and $R=0$ is a cuspidal sextic.
In Section~\ref{SecPos} we prove Theorem~\ref{MWGrpThm}.

\begin{notation}
Let $x,y,z_0,z_1,z_2$ be coordinates on $\Ps(2,3,1,1,1)$. 
Throughout this paper $Y\subset \Ps(2,3,1,1,1)$ is a reduced and irreducible degree $6$ hypersurface, containing the point $(1:1:0:0:0)$, and such that $Y$ corresponds to an elliptic fibration with constant $j$-invariant, i.e., $Y$ has a defining equation of the form
\[ y^2=x^3+AP^2x+BP^3, \; y^2=x^3+Qx, \mbox{ or } y^2=x^3+R.\]
Here $P,Q,R$ are homogeneous polynomials in $z_0,z_1,z_2$ of degree $2,4$ and $6$ respectively and $A,B\in \C\setminus \{0\}$.
The curve $C$ is the curve defined by $P=0$, $Q=0$ or $R=0$ (depending on the case).
The set $\cQ$ consists of the singular points of $C_{\red}$.
\end{notation}

\section{Preliminaries} \label{secPrelim}

\begin{definition} An \emph{elliptic $n$-fold} is a quadruple
$(X,B,\pi,\sigma_0)$, with $X$ a smooth projective $n$-fold, $B$ a smooth
projective $n-1$-fold, $\pi:X\to B$ a flat morphism, such that the generic fiber is
a genus 1 curve and $\sigma_0$ is a section of $\pi$.

The \emph{Mordell-Weil group} of $\pi$, denoted by $\MW(\pi)$, is the group of 
rational sections $\sigma: B \dashrightarrow  X$ with identity element
$\sigma_0$.
\end{definition}

We will focus on the cases $n=2,3$. Note that in the case $n=2$ any rational section can be extended to a regular section.

Clearly $\MW(\pi)$ is a birational invariant, in the sense that if $\pi_i:
X_i\to B_i$, $i=1,2$ are elliptic $n$-folds such that there exists a birational
isomorphism $\varphi: X_1\stackrel{\sim}{\dashrightarrow} X_2$ mapping the general
fiber of $\pi_1$ to the general fiber of $\pi_2$, then $\varphi^*: \MW(\pi_2) \to
\MW(\pi_1)$ is well-defined and is an isomorphism.

We shall now describe in some detail how to associate to
an elliptic $n$-fold $\pi: X \to B$ a degree $6k$  hypersurface $Y$ in the weighted projective
space $\Ps:=\Ps(2k,3k,1^{n-1})$ which is birational to
$X$.
Here we restrict
ourselves to the case where $B$ is a rational $n-1$-fold.  In this case,
the morphism $\pi$ establishes $\C(X)$ as a field extension of
$\C(B)=\C(z_1,\dots,z_{n-1})$.
The field $\C(X)$ is the function field of an elliptic curve $E$ over
$\C(z_1,\dots,z_{n-1})$,
i.e., $\C(X)=\C(x,y,z_1,\dots,z_{n-1})$ where \begin{equation}\label{eqnWei}
y^2=x^3+f_1(z_1,\dots,z_{n-1})x+f_2(z_1,\dots,z_{n-1})\end{equation} with $f_1,f_2\in
\C(z_1,\dots,z_{n-1})$.
One has a natural isomorphism
\[ \MW(\pi)\cong E(\C(B))\]
where $E(\C(B))$ is the group of $\C(B)$-rational points of $E$.

Without loss of generality we may assume that (\ref{eqnWei}) is a global minimal
Weierstrass equation, i.e.,  $f_1,f_2$ are polynomials and there is no
polynomial $g\in \C[z_1,\dots,z_{n-1}]\setminus \C$ such that $g^4$ divides $f_1$ and $g^6$ divides
$f_2$. 
 
To obtain a hypersurface in $\Ps$ we need to find a weighted
homogeneous polynomial. Let $k=\lceil \max  \{\deg(f_1)/4,\deg(f_2)/6\} \rceil$ 
and define $P$ and $Q$ as the polynomials 
 \[P=z_0^{4k} f_1(z_1/z_0,\dots,z_{n-1}/z_0), \quad Q=z_0^{6k} f_2(z_1/z_0,\dots,z_{n-1}/z_0).\]
  Then
 \[ y^2=x^3+P(z_0,z_1,\dots,z_{n-1})x+Q(z_0,z_1,\dots,z_{n-1})\]
 defines a hypersurface $Y$ of degree $6k$ in $\Ps$. Let
$\Sigma$ be the locus 
where all the partial derivatives of the defining equation vanish.
Consider the projection $\tilde{\psi}: \Ps \dashrightarrow \Ps^{n-1}$
with center
$L=\{z_0=\dots=z_{n-1}=0\}$ and its restriction $\psi=\tilde{\psi}|_Y$ to $Y$. 
Then there exists a diagram
\[
 \xymatrix{X \ar@{-->}[r] \ar[d]^\pi & Y\ar@{-->}[d]^\psi \\ S \ar@{-->}[r]&
\Ps^ 2.  }
 \] 

Note that $Y\cap L=\{(1:1:0:\dots:0)\}$. 
If $k=1$ then $\Ps_{\sing}$ consists of two points, none of which lie on
$Y$. If $k>1$ then  an easy calculation in local coordinates shows that
$\Ps_{\sing}$
is precisely $L$, that $\Sigma$ and $L$ are disjoint  and that $Y$ has an
isolated singularity at 
$(1:1:0:\dots:0)$. 
For any $k$ we have that $\psi$ is not defined at $(1:1:0:\dots:0)$. 
Let $\tilde{\Ps}$ be the blow-up of $\Ps$ along $L$.
Let $X_0$ be the strict transform of $Y$ in $\tilde{\Ps}$. An easy calculation
in local coordinates shows that $X_0\to Y$ resolves the singularity of $Y$ at
$(1:1:0:\dots:0)$ and that the induced map $\pi_0: X_0 \to \Ps^2$ is a morphism. 
Moreover, all fibers of $\pi_0$ are irreducible
curves.

\begin{definition}
The degree of an elliptic $n$-fold $\pi:X\to B$, with rational base, is the smallest $k$ such that there is a degree $6k$ hypersurface $Y$ in $\Ps(2k,3k,1^{n-1})$ birational to $\pi$.
\end{definition}

As remarked above, we can consider the generic fiber of $\pi$ as an elliptic curve $E$ over $\C(z_1,\dots,z_{n-1})$. In the sequel we consider only elliptic curves such that $j(\pi)=j(E)$ is constant, i.e., $j(E)\in \C$. 
Most of the sequel will be concentrated on $j(\pi)\in \{0,1728\}$. If this is the case then $E$ has complex multiplication.

\begin{lemma}\label{evenrank} Let $K$ be a field, $E/K$ an elliptic curve, such that $E$ has complex multiplication over $K$. Suppose $\rank E(K)$ is finite. Then $\rank_{\Z} E(K)$ is even.
\end{lemma}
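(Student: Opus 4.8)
The plan is to exploit the fact that the ``extra'' endomorphisms of $E$, being defined over $K$, act on the group $E(K)$, turning $E(K)\otimes_\Z\Q$ into a vector space over a quadratic field. Concretely, set $R:=\End_K(E)$. The hypothesis that $E$ has complex multiplication over $K$ means precisely that $R\supsetneq\Z$. Since $E$ is an elliptic curve, $R\otimes_\Z\Q$ is $\Q$, an imaginary quadratic field, or a quaternion algebra over $\Q$; the first possibility is excluded, and in the remaining cases $R\otimes_\Z\Q$ contains a subfield $F$ with $[F:\Q]=2$. (In the situation of this paper everything is defined over $\C$, so in fact $R$ is an order in an imaginary quadratic field $F$ and $R\otimes_\Z\Q=F$.)

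Next I would record the module structure. An element $\phi\in R$ is, by definition, a morphism $E\to E$ defined over $K$, hence it induces a group homomorphism $\phi:E(K)\to E(K)$; in this way $E(K)$ becomes a module over $R$, compatibly with its underlying $\Z$-module structure. Now put $V:=E(K)\otimes_\Z\Q$. By assumption $\rank_\Z E(K)$ is finite, so $V$ is a finite-dimensional $\Q$-vector space with $\dim_\Q V=\rank_\Z E(K)$. Tensoring the $R$-action with $\Q$ makes $V$ a module over $R\otimes_\Z\Q$, and restricting scalars along the inclusion $F\hookrightarrow R\otimes_\Z\Q$ makes $V$ an $F$-vector space. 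Writing $d:=\dim_F V<\infty$, we get
\[ \rank_\Z E(K)=\dim_\Q V=[F:\Q]\cdot\dim_F V=2d, \]
which is even, as claimed.

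There is no serious obstacle here; the two points that need a little care are (i) that the complex multiplication is defined \emph{over $K$}, so that the extra endomorphisms genuinely act on $E(K)$ and not merely on $E(\bar K)$, and (ii) that one is allowed to pass from the order $R$ to the field $F$ — this is legitimate only after tensoring with $\Q$, where it works simply because every module over a field is free. For the applications in this paper only the clean case $R\otimes_\Z\Q=F$ imaginary quadratic is needed.
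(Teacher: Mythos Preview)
Your argument is correct and follows essentially the same idea as the paper: both use that the complex multiplication, being defined over $K$, makes $E(K)$ an $\End(E)$-module, and then deduce evenness of the $\Z$-rank from $\rank_\Z \End(E)=2$. The paper states this slightly more bluntly as $E(K)\cong E(K)_{\tor}\oplus \End(E)^r$, whereas you pass to $E(K)\otimes_\Z\Q$ and work over the field $F=\End(E)\otimes_\Z\Q$; your version is arguably cleaner, since it sidesteps the question of whether finitely generated torsion-free modules over the order $\End(E)$ are actually free, which the paper's formulation tacitly assumes.
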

\begin{proof}
Since $E(K)$ is a $\End(E)$-module it follows that
\[ E(K) \cong E(K)_{\tor} \oplus \End(E)^r.\]
Since $\End(E)$ is a free $\Z$-module of rank 2, it follows that $E(K)\cong E(K)_{\tor}\oplus \Z^{2r}$ (as $\Z$-modules), hence $E(K)$ has even $\Z$-rank.
\end{proof}

The following minor results will be used several times:

\begin{lemma}\label{lem4tor} Let $V/\C$ be a variety. Let $E/\C(V)$ be an elliptic curve such that $j(E)\in \C$. Suppose $j(E)\neq 0,1728$, then $(\Z/2\Z)^2$ is a subgroup of $E(\C(V))$.
\end{lemma}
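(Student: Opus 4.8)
The plan is to exhibit the full $2$-torsion subgroup of $E$ explicitly and to check that all three nontrivial points are already defined over $\C(V)$. First I would use the hypothesis $j(E)\in\C$ to realize $E$ as a twist of the elliptic curve $E_0/\C$ with $j(E_0)=j(E)$: over the algebraically closed field $\overline{\C(V)}$ an elliptic curve is determined up to isomorphism by its $j$-invariant, so $E\times_{\C(V)}\overline{\C(V)}\cong E_0\times_{\C}\overline{\C(V)}$. Since $j(E)\neq 0,1728$ we have $\Aut(E_0)=\{\pm1\}$, so this twist is a quadratic twist.

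Next I would pass to explicit Weierstrass models. Choose $y^2=x^3+f_1x+f_2$ for $E$ over $\C(V)$ and $y^2=x^3+ax+b$ for $E_0$ over $\C$; the hypothesis $j(E)\neq 0,1728$ forces $a,b,f_1,f_2$ all nonzero. Any isomorphism over $\overline{\C(V)}$ then has the form $(x,y)\mapsto(u^2x,u^3y)$ with $u\in\overline{\C(V)}^{\times}$ satisfying $u^4=a/f_1$ and $u^6=b/f_2$, hence $u^2=u^6/u^4=bf_1/(af_2)\in\C(V)^{\times}$. Now the $2$-torsion of $E_0$ consists of $O$ together with $(e_1,0),(e_2,0),(e_3,0)$, where $e_1,e_2,e_3$ are the roots of $x^3+ax+b$; since $\C$ is algebraically closed, $e_1,e_2,e_3\in\C$. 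Pulling these back along the isomorphism gives the nontrivial $2$-torsion points $(u^{-2}e_i,0)$ of $E$, and $u^{-2}e_i\in\C(V)$ because $u^{-2}\in\C(V)$ and $e_i\in\C$. Therefore $(\Z/2\Z)^2\cong E[2]\subseteq E(\C(V))$, which is the claim.

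I do not expect a serious obstacle here; the only point to watch is that one controls only $u^2$, and not $u$ itself, over $\C(V)$. This is harmless precisely because $2$-torsion points have vanishing $y$-coordinate, so the isomorphism affects them only through the rescaling $x\mapsto u^{-2}x$. Equivalently, $[-1]$ acts as the identity on $E[2]$, so the quadratic twist relating $E$ to the constant curve $E_0$ does not change the Galois action on the $2$-torsion, which is trivial for $E_0$ and hence trivial for $E$.
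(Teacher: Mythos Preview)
Your proof is correct and follows essentially the same approach as the paper. The paper is slightly more streamlined: it directly invokes the standard fact that, since $j(E)\neq 0,1728$, the curve $E$ has a Weierstrass model $y^2=x^3+aP^2x+bP^3$ for some $P\in\C(V)^*$ (this $P$ is your $u^{-2}$), and then observes that the roots $\alpha_iP$ of the cubic give the three nontrivial $2$-torsion points; you instead start from an arbitrary model and derive $u^2\in\C(V)^*$ explicitly, which amounts to rederiving that standard fact.
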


\begin{proof} Let $E'/\C$ be an elliptic curve with $j(E')=j(E)$. Then we can find a Weierstrass equation $y^2=x^3+ax+b$ for $E'$, with $a,b\in \C$. Let $\alpha_1,\alpha_2,\alpha_3$ be roots of $x^3+ax+b$.

Since $j(E)\neq0,1728$ we have that $E$ is given by 
\[y^2=x^3+aP^2x+bP^3\]
for some $P\in \C(V)^*$. For $i=1,2,3$ we have that $x=\alpha_iP$ is a root of $x^3+aP^2x+bP^3$, hence $x=\alpha_iP,y=0$ is a point of order 2 on $E(\C(V))$. From this it follows that $(\Z/2\Z)^2\subset E(\C(V))$.
\end{proof}

\begin{lemma}\label{lem2tor}
 Let $K$ be any field not of characteristic $2,3$. Let $E/K$ be an elliptic curve with $j(E)=1728$ then $E(K)$ contains a point of order 2.
\end{lemma}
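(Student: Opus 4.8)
\emph{Proof proposal.} The plan is to reduce $E$ to a short Weierstrass model and simply read off a $2$-torsion point. First I would use the hypothesis $\operatorname{char} K\neq 2,3$ to put $E$ in the form $y^2=x^3+Ax+B$ with $A,B\in K$ and $4A^3+27B^2\neq 0$: completing the square in $y$ is legitimate because $2$ is invertible in $K$, and the subsequent substitution $x\mapsto x-a_2/3$, which removes the quadratic term in $x$, is legitimate because $3$ is invertible in $K$.

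Next I would compute the $j$-invariant of this model, namely $j(E)=1728\cdot\dfrac{4A^3}{4A^3+27B^2}$, and impose $j(E)=1728$. This forces $4A^3=4A^3+27B^2$, hence $27B^2=0$; since $\operatorname{char}K\neq 3$ we get $B=0$. Moreover $A\neq 0$, since otherwise the discriminant $-16(4A^3+27B^2)$ would vanish and $E$ would not be an elliptic curve. Thus $E$ admits a Weierstrass model $y^2=x^3+Ax$ with $A\in K\setminus\{0\}$; equivalently, $E$ is a quartic twist of $y^2=x^3+x$.

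Finally, on this model the point $(0,0)$ lies on $E$ and is defined over $K$. Since its $y$-coordinate is $0$ it is a nontrivial point of order $2$: on any model $y^2=f(x)$ the nonzero $2$-torsion points are exactly the $(x_0,0)$ with $f(x_0)=0$, and here $f(x)=x^3+Ax=x(x^2+A)$ vanishes at $x=0$. Hence $E(K)$ contains a point of order $2$.

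I do not expect a genuine obstacle here; the only thing to keep track of is the bookkeeping on the characteristic hypotheses, where $\operatorname{char}K\neq 2$ is used for completing the square, and $\operatorname{char}K\neq 3$ is used both to normalize the cubic and, more importantly, to conclude $B=0$ from $27B^2=0$.
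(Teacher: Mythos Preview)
Your argument is correct and follows the same approach as the paper: reduce to a short Weierstrass model $y^2=x^3+Ax$ using $j=1728$ and exhibit $(0,0)$ as a point of order~$2$. The paper simply asserts the existence of such a model without writing out the $j$-invariant computation, whereas you have supplied those details.
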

\begin{proof}
Since $K$ is not of characteristic $2,3$ we have that $E$ has a Weierstrass equation $y^2=x^3+ax$ with $a\in K$. The point $(0,0)$ is a point of order 2.
\end{proof}

\section{Possible Mordell-Weil groups \& Specialization} \label{secMWpos}
We describe now all possible Mordell-Weil groups for elliptic surfaces of degree 1 with  constant $j$-invariant. Using a specialization result this limits the possibilities for Mordell-Weil groups for elliptic threefolds of degree 1.
Note that an elliptic surface is rational if and only if its degree is 1. 
We start by recalling some results from Oguiso and Shioda \cite{OS}:

\begin{proposition} \label{prpSur0} Let $\pi :S\to \Ps^1$ be a rational elliptic surface with $j(\pi)=0$. Then
$S$ is birationally equivalent to a surface in $\Ps(2,3,1,1)$ given by an equation of the form $y^2=x^2+f(z_0,z_1)$, with $f$ homogeneous of degree $6$. In the following table we list all possible factorizations of $f$, the contribution of the singular fibers to the N\'eron-Severi lattice, $\rank \MW(\pi)$ and $\# \MW(\pi)_{\tor}$.
\[
\begin{array}{|c|c|c|c|}
\hline
\mbox{Factorization}&  \mbox{Cont. to } \NS(S) & \rank \MW(\pi) &\# \MW(\pi)_{\tor}\\
\hline
{[}1,1,1,1,1,1{]}    & -    & 8 &1\\
{[}2,1,1,1,1]  & A_2  & 6 &1\\
{[}2,2,1,1] & 2A_2 & 4& 1\\
{[}2,2,2]    & 3A_2 &2 &3\\
{[}3,1,1,1]  &  D_4  & 4 &1\\
{[}3,2,1]   & D_4+A_2& 2 &1\\
{[}3,3]     & 2D_4   & 0 &4\\
{[}4,1,1]  & E_6    & 2 &1\\
{[}4,2]   & E_6+A_2 & 0 & 3\\
{[}5,1]    & E_8  & 0 & 1\\
\hline
\end{array}
\]
\end{proposition}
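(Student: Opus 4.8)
The plan is to read the whole table off the Weierstrass model in four steps: put $S$ in normal form, identify the Kodaira fibres from the zero multiplicities of $f$, compute $\rank\MW(\pi)$ by Shioda--Tate, and pin down the torsion by exhibiting a few explicit sections, leaving to \cite{OS} the verification that nothing is missed. \textbf{Normal form.} Since $S$ is rational it has degree $1$, so the construction of Section~\ref{secPrelim} (with $n=2$, $k=1$) makes it birational to a degree-$6$ hypersurface in $\Ps(2,3,1,1)$ carrying a global minimal Weierstrass equation $y^2=x^3+f_1x+f_2$ with $f_1,f_2\in\C[z_0,z_1]$ homogeneous of degrees $4$ and $6$. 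From $j(\pi)=1728\cdot 4f_1^3/(4f_1^3+27f_2^2)\equiv 0$ we get $f_1\equiv 0$, so the equation reads $y^2=x^3+f$ with $f:=f_2\in\C[z_0,z_1]_6$, and minimality forces $f$ not to be a perfect sixth power (otherwise $S$ would be birational to a product, contradicting rationality). Writing $f=\prod_i\ell_i^{m_i}$ with pairwise non-proportional $\ell_i$ we therefore have a partition $\sum_i m_i=6$ with every part at most $5$.

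\textbf{Singular fibres.} To a zero $p$ of $f$ of multiplicity $m$ one attaches $c_4\equiv 0$ and $v_p(\Delta)=v_p(-432f^2)=2m$; Tate's algorithm (equivalently Kodaira's table in the case of potentially good reduction with $j=0$) then gives fibre type $\mathrm{II},\mathrm{IV},\mathrm{I}_0^{*},\mathrm{IV}^{*},\mathrm{II}^{*}$ for $m=1,\dots,5$, with Euler number $2m$ and contribution $0,A_2,D_4,E_6,E_8$ to $\NS(S)$. Because $\sum_p m_p=\deg f=6$, these Euler numbers already sum to $12=e(S)$, so there are no further singular fibres and no zero of multiplicity $\ge 6$ (which would make $f$ a sixth power). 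Hence a configuration of singular fibres is precisely a partition of $6$ into parts $\le 5$, which yields the ten rows of the table together with the listed contributions to $\NS(S)$.

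\textbf{Rank and torsion.} A rational elliptic surface has $\rank\NS(S)=10$, and Shioda--Tate gives $\rank\NS(S)=2+\rank R+\rank\MW(\pi)$, where $R$ is the sum of the root lattices just found; hence $\rank\MW(\pi)=8-\rank R$, which is the third column. For the torsion, $\MW(\pi)_{\tor}$ injects into the product of the groups of components of the fibres, a product of copies of $\Z/3\Z$ (types $\mathrm{IV},\mathrm{IV}^{*}$) and $(\Z/2\Z)^2$ (type $\mathrm{I}_0^{*}$). A $2$-torsion section is the same as a root of $x^3+f$ in $\C[z_0,z_1]$, hence exists iff $f$ is a perfect cube (up to a scalar, which is harmless over $\C$); among the ten configurations this occurs only for $[3,3]$, where $x^3+f$ splits completely over $\C$ and one gets $\MW(\pi)_{\tor}=(\Z/2\Z)^2$. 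A $3$-torsion section forces, via the $3$-division polynomial $3x(x^3+4f)$, that $f$ be a perfect square — in which case $(0,\sqrt f)$ is such a section, its tangent line $y=\sqrt f$ meeting the fibre with multiplicity $3$ — or a scalar times a perfect cube; the square case is exactly $[2,2,2]$ and $[4,2]$. The orders in the last column then follow by combining the presence of these sections with the discriminant relation $|\det R|=|\MW(\pi)_{\tor}|^2\cdot|\det\Lambda|$ for a rational elliptic surface, where $\Lambda=\MW(\pi)/\MW(\pi)_{\tor}$ is the Mordell--Weil lattice (this rules out, e.g., $(\Z/3\Z)^2$ in the $[2,2,2]$ and $[4,2]$ cases and $\Z/4\Z$ in the $[3,3]$ case); and each of the ten configurations is clearly realised by a suitable binary sextic $f$.

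The step I expect to be the real obstacle is the torsion column. Exhibiting the torsion sections above is elementary, but ruling out extra torsion — in the rank-$0$ cases $[3,3],[4,2],[5,1]$ and the mixed case $[2,2,2]$ — needs the discriminant/height-pairing input just mentioned, and it is essentially here that one leans on the Oguiso--Shioda classification \cite{OS}; the rest of the argument is bookkeeping with Kodaira's list and the Shioda--Tate formula.
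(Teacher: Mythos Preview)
The paper does not actually prove this proposition: it is simply recalled from Oguiso--Shioda \cite{OS} (see the sentence introducing Propositions~\ref{prpSur0}--\ref{propSurGen}). Your argument therefore supplies strictly more than the paper does, and the strategy---normal form, Kodaira fibres from the zero multiplicities of $f$, rank via Shioda--Tate, torsion via explicit sections plus the injection into component groups---is the standard one and is carried out correctly.

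One correction in the torsion paragraph: the discriminant relation you quote is inverted. For a rational elliptic surface (where $\NS(S)$ is unimodular) Shioda's formula reads
\[
|\MW(\pi)_{\tor}|^{2}=|\det R|\cdot|\det\Lambda|,
\]
not $|\det R|=|\MW(\pi)_{\tor}|^{2}\cdot|\det\Lambda|$. More importantly, you do not need it at all. Your own division-polynomial analysis already settles every case: for $[2,2,2]$ and $[4,2]$ the sextic $f$ is a perfect square but \emph{not} a perfect cube, so the $x^{3}=-4f$ branch contributes nothing and the only $3$-torsion sections are $(0,\pm\sqrt{f})$, giving exactly $\Z/3\Z$; for $[3,3]$ the injection of $\MW(\pi)_{\tor}$ into the product of component groups lands in $(\Z/2\Z)^{4}$, a group of exponent $2$, so $\Z/4\Z$ is already excluded and the three roots of $x^{3}+f$ furnish $(\Z/2\Z)^{2}$. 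With these observations the appeal to \cite{OS} in your last paragraph is a sanity check rather than a logical input, and you may drop the (misstated) discriminant formula entirely.
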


\begin{proposition}\label{prpSur1728}
Let $\pi :S\to \Ps^1$ be a rational elliptic surface with $j(\pi)=1728$. Then
$S$ is birationally equivalent to a surface in $\Ps(2,3,1,1)$ given by an equation of the form $y^2=x^2+f(z_0,z_1)x$, with $f$ homogeneous of degree $4$. In the following table we list all possible factorizations of $f$, the contribution of the singular fibers to the N\'eron-Severi lattice, $\rank \MW(\pi)$ and $\# \MW(\pi)_{\tor}$.
\[
\begin{array}{|c|c|c|c|}
\hline
\mbox{Factorization}&  \mbox{Cont. to} \NS(S) & \rank \MW(\pi) &\# \MW(\pi)_{\tor}\\
\hline
{[} 1,1,1,1]   & 4A_1    & 4 & 2\\
{[} 2,1,1] & D_4+2A_1 & 2 &2\\
{[} 2,2]   & 2D_4    & 0 & 4\\
{[} 3,1]   & E_7+A_1 & 0 & 2 \\
\hline
\end{array}
\]
\end{proposition}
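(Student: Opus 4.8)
The proposition is, in essence, the restriction of the Oguiso--Shioda classification of rational elliptic surfaces \cite{OS} to the case $j(\pi)=1728$; the plan is to determine which configurations of singular fibres can occur under this constraint, and then to read off the Mordell--Weil data.

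\emph{Normal form.} Since $j(\pi)=1728$ is constant, the generic fibre $E$ over $\C(z_1/z_0)$ is a quartic twist of $y^2=x^3+x$ and hence has a Weierstrass equation $y^2=x^3+fx$ with $f\in\C[z_0,z_1]$ homogeneous; absorbing fourth powers of non-constant factors of $f$ into $x$ and $y$ we may take this equation to be minimal, i.e. no fourth power of a linear form divides $f$. Its discriminant is $-64f^3$, so $\deg\Delta=3\deg f$. As $S$ is rational it has degree $1$ (Section~\ref{secMWpos}), which forces $\deg\Delta=12$ and hence $\deg f=4$: so $f$ is a fourth-power-free binary quartic, giving the stated normal form.

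\emph{Singular fibres and ranks.} Up to scaling, $f$ is determined by the multiset of multiplicities of its roots on $\Ps^1$, and the partitions of $4$ other than $[4]$ are exactly $[1,1,1,1]$, $[2,1,1]$, $[2,2]$ and $[3,1]$, the first column of the table. At a root of multiplicity $m\le 3$ one has $v(f)=m$, $v(\Delta)=3m$, and Tate's algorithm (equivalently Kodaira's list for equations of the shape $y^2=x^3+ax$) gives a fibre of type $\mathrm{III}$, $\mathrm{I}_0^{*}$ or $\mathrm{III}^{*}$ for $m=1,2,3$, contributing the root lattices $A_1$, $D_4$ or $E_7$ respectively, while all other fibres are smooth; this is the second column, and as a check the Euler numbers $3,6,9$ sum to $12$ in every row. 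The Shioda--Tate formula then gives $\rank\MW(\pi)=8-\rank T$, with $T$ the root lattice just determined, which is the third column.

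\emph{Torsion, and the main obstacle.} By Lemma~\ref{lem2tor} the section $(0,0)$ always supplies a copy of $\Z/2\Z$ in $\MW(\pi)_{\tor}$, and the full $2$-torsion is rational exactly when $x^{2}+f$ splits over $\C(z_0,z_1)$, i.e. when $-f$ is a square, i.e. when all roots of $f$ have even multiplicity; among the four cases this happens only for $[2,2]$. Since $\MW(\pi)_{\tor}$ embeds into $\bigoplus_v G_v$, the sum of the component groups of the reducible fibres, and $G_{\mathrm{III}}\cong\Z/2\Z$, $G_{\mathrm{I}_0^{*}}\cong(\Z/2\Z)^2$, $G_{\mathrm{III}^{*}}\cong\Z/2\Z$ are elementary abelian $2$-groups, there is no odd torsion. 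The only step that is not routine is to rule out $\Z/4\Z$ (and larger $2$-groups) in the two positive-rank rows: I would do this by the standard constraint that a torsion section meets each fibre in a component whose class in $G_v$ is annihilated by the order of the section, which in the $4A_1$ and $D_4+2A_1$ cases is incompatible with an element of order $4$, or else simply quote the explicit $\MW(\pi)$ attached to these four fibre configurations in \cite{OS}. That all four rows occur is seen from $f=z_0z_1(z_0-z_1)(z_0+z_1)$, $z_0^{2}z_1(z_0-z_1)$, $z_0^{2}z_1^{2}$ and $z_0^{3}z_1$.
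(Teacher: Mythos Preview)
The paper does not prove this proposition: it is introduced with ``We start by recalling some results from Oguiso and Shioda \cite{OS}'' and simply stated without argument. Your sketch is therefore not being compared against a proof in the paper but against a bare citation.

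That said, your reconstruction is sound and follows exactly the route one would take to extract this table from first principles: the normal form is correct (minimality forbids $[4]$, and $\deg\Delta=12$ forces $\deg f=4$); the identification of fibre types $\mathrm{III},\mathrm{I}_0^*,\mathrm{III}^*$ from $v(f)=1,2,3$ via Tate is correct; and Shioda--Tate gives the rank column immediately. For torsion you correctly observe that $(0,0)$ gives $\Z/2\Z$ always, that full $2$-torsion appears exactly in the $[2,2]$ case, and that the component groups are all $2$-elementary so no odd torsion occurs. The one place you flag as incomplete --- excluding $\Z/4\Z$ in the positive-rank rows --- is genuinely the only nontrivial step, and your suggested fix (either the height/component-intersection constraint or direct citation of \cite{OS}) is adequate. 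In the $[2,2]$ and $[3,1]$ rank-zero rows one can also appeal to the determinant formula $|\MW_{\tor}|^2\cdot\det\Lambda=\det T$ with $\Lambda=E_8$, which gives $|\MW_{\tor}|^2=4$ in both cases and hence $|\MW_{\tor}|=2$ for $E_7+A_1$ (since $\det(E_7\oplus A_1)=4$) and $|\MW_{\tor}|=4$ for $2D_4$ (wait, $\det(D_4^2)=16$, giving $|\MW_{\tor}|=4$); actually this confirms your numbers directly and would tighten the argument.

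In short: the paper offers no proof to compare against, and your independent derivation is correct modulo the torsion bound you already identified as the main obstacle.
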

\begin{proposition}\label{propSurGen}
Let $\pi :S\to \Ps^1$ be a rational elliptic surface with $j(\pi)$ constant and $j(\pi)\neq 0,1728$. Then $S$ is birationally equivalent to a surface in $\Ps(2,3,1,1)$
given by an equation of the form $y^2=x^2+af(z_0,z_1)^2x+bf(z_0,z_1)^3$, with $\deg(f)=2$ and $a,b\in \C$. Then $\pi$ has two fibers with corresponding lattice $D_4$ and $\MW(\pi)\cong (\Z/2\Z)^2$.
\end{proposition}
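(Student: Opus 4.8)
The plan is to pin down the Weierstrass equation of $\pi$ first, and then to read off the singular fibres and the Mordell--Weil group from the geometry of rational elliptic surfaces. So let $K=\C(\Ps^1)$ and let $E/K$ be the generic fibre of $\pi$, so that $\MW(\pi)\cong E(K)$ and $j(E)=j(\pi)\in\C\setminus\{0,1728\}$. Pick an elliptic curve $E_0/\C$ with $j(E_0)=j(\pi)$ and a Weierstrass equation $y^2=x^3+ax+b$ for it; since $j(\pi)\neq 0,1728$ we have $a,b\in\C\setminus\{0\}$ and $\Aut(E_0)=\{\pm1\}$. As $E$ and the constant curve $E_0\times_\C K$ have the same $j$-invariant, and $j\neq 0,1728$, the curve $E$ is a quadratic twist of $E_0\times_\C K$; hence $E$ is given by $y^2=x^3+ad^2x+bd^3$ for some $d\in K^\ast$, which modulo squares (and using that $\C$ is algebraically closed) we may take to be a squarefree polynomial. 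Homogenizing and passing to the minimal Weierstrass model of the relatively minimal surface $S\to\Ps^1$ as in Section~\ref{secPrelim}, the Weierstrass coefficients become $af^2$ and $bf^3$ for a homogeneous $f\in\C[z_0,z_1]$, and minimality is equivalent to $f$ being squarefree; moreover $f$ is nonconstant, since otherwise $S$ would be birational to $E_0\times\Ps^1$ and hence not rational. Finally $S$ rational gives $\chi(\cO_S)=1$, so $e(S)=12$ and the discriminant $\Delta=-16(4a^3+27b^2)f^6$ has degree $12$; therefore $\deg f=2$, which yields the asserted equation in $\Ps(2,3,1,1)$.

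Next I would read off the singular fibres. Since $f$ is squarefree of degree $2$, $\Delta$ vanishes precisely at the two points of $\{f=0\}$, each to order $6$, while $c_4$ (a nonzero constant multiple of $f^2$) vanishes there to order exactly $2$. Applying Tate's algorithm at such a point --- the case $v(\Delta)=6$, $v(c_4)=2$ --- the fibre has type $I_0^\ast$ precisely when the associated cubic, which here is up to units the reduction of $x^3+ax+b$, has three distinct roots; this holds because $-16(4a^3+27b^2)\neq 0$. Thus $\pi$ has exactly two reducible fibres, both of type $I_0^\ast$, each contributing a copy of $D_4$ to $\NS(S)$, as claimed.

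For the Mordell--Weil group: every rational elliptic surface has $\rho(S)=10$, and the Shioda--Tate formula reads $\rho(S)=2+\sum_v(m_v-1)+\rank\MW(\pi)$ with $m_v$ the number of components of the fibre over $v$. The two $I_0^\ast$ fibres contribute $2\cdot 4=8$, so $\rank\MW(\pi)=0$; by Lemma~\ref{lem4tor}, $\MW(\pi)$ contains $(\Z/2\Z)^2$. For the reverse inclusion, the trivial lattice $T=\langle F,O\rangle\oplus D_4\oplus D_4$ (with $F$ a fibre and $O$ the zero section) has full rank $10$ in the unimodular lattice $\NS(S)$, so $[\NS(S):T]^2=|\operatorname{disc}T|=1\cdot 4\cdot 4=16$; since $\MW(\pi)\cong\NS(S)/T$ this forces $|\MW(\pi)|=4$ and hence $\MW(\pi)\cong(\Z/2\Z)^2$. (Alternatively one may quote the row for two $I_0^\ast$ fibres in the Oguiso--Shioda classification \cite{OS}.)

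I expect the first step to be the only delicate point: one must check that an \emph{arbitrary} rational elliptic surface with constant $j\neq 0,1728$ has a birational model of precisely this form, and the subtle part is that relative minimality together with $\chi(\cO_S)=1$ is exactly what forces $\deg f=2$ and forces $f$ to be squarefree --- so that the Weierstrass model is minimal and the two fibres are genuinely of type $I_0^\ast$ rather than smooth after some change of model. Everything after Step~1 is a routine application of Tate's algorithm and of Shioda--Tate.
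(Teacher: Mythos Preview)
Your argument is correct and complete. Note, however, that the paper does not actually give a proof of this proposition: it is stated together with Propositions~\ref{prpSur0} and~\ref{prpSur1728} under the heading ``We start by recalling some results from Oguiso and Shioda \cite{OS}'', and is left as a citation. So there is no paper-proof to compare against; what you have written is a self-contained derivation of a result the paper merely quotes.

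Your proof is the natural direct one: identify the generic fibre as a quadratic twist (using $j\neq 0,1728$ so that $\Aut(E_0)=\{\pm1\}$), use rationality via $\chi(\cO_S)=1$ to force $\deg f=2$, read off the two $I_0^\ast$ fibres from Tate's algorithm, and then finish with Shioda--Tate together with the discriminant computation $|\operatorname{disc}(D_4\oplus D_4)|=16$ in the unimodular lattice $\NS(S)$ to get $|\MW(\pi)|=4$, the inclusion $(\Z/2\Z)^2\subset\MW(\pi)$ coming from Lemma~\ref{lem4tor}. You even flag the alternative of quoting \cite{OS} at the end, which is exactly what the paper does. Nothing needs to be changed.
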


Therefore, for a rational elliptic surface with constant $j$-invariant the  possible Mordell-Weil groups are $\Z^{r}$, $r\in \{0,2,4,6,8\}$; $(\Z/2\Z) \times \Z^{r_2}$, $r_2\in\{0,2,4\}$; $(\Z/3\Z)\times \Z^{r_3}$, $r_3\in \{0,2\}$ and $\Z/4\Z$.

We will prove now a specialization result, probably well-known to the experts, that implies that in the threefold case, the Mordell-Weil group is a subgroup of one of the above groups.

Let $Y\subset \Ps(2,3,1,1,1)$ be an elliptic threefold. Let $\ell=\{a_0z_0+a_1z_1+a_2z_2=0\} \subset \Ps^2$ be a line. Let $H_\ell=\{a_0z_0+a_1z_1+a_2z_2=0\} \subset \Ps$ be the corresponding hyperplane. Then $Y_\ell=Y \cap H_\ell\subset \Ps(2,3,1,1)$ is a rational elliptic surface, provided $\ell$ was not a component of the discriminant curve of $\pi$.

\begin{lemma} The restriction of rational sections to $\ell$ defines a group homomorphism $\MW(\pi) \to \MW(\pi_\ell)$.

\end{lemma}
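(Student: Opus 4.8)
The plan is to show that restriction of a rational section $\sigma: B \dashrightarrow X$ to the surface $Y_\ell$ is well-defined as a rational section of $\pi_\ell$, and that this operation respects the group law. First I would recall that, by the discussion in Section~\ref{secPrelim}, a rational section of $\pi$ corresponds to a $\C(z_0,z_1,z_2)$-rational point of the generic fiber $E$, i.e.\ an element of $E(\C(B))$, and likewise $\MW(\pi_\ell) \cong E_\ell(\C(\ell))$ where $E_\ell$ is the generic fiber of $\pi_\ell$. The key observation is that $E_\ell$ is obtained from $E$ by the base change $\C(z_0,z_1,z_2) \to \C(\ell)$ corresponding to restricting rational functions to the line $\ell$ (concretely, if $\ell = \{a_0z_0+a_1z_1+a_2z_2 = 0\}$ we may, after a linear change of coordinates, assume $\ell = \{z_2 = 0\}$ and then this is the specialization $z_2 \mapsto 0$ on the coefficient field). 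Since the Weierstrass coefficients $f_1, f_2$ of $E$ (polynomials in the $z_i$) restrict to the Weierstrass coefficients of $E_\ell$, this base change is a ring homomorphism on the relevant coordinate rings, and so it induces a group homomorphism on the groups of rational points.

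Concretely, I would argue as follows. Write a rational section $\sigma$ of $\pi$ in the affine Weierstrass chart as $(x,y) = (g_1(z_0,z_1,z_2), g_2(z_0,z_1,z_2))$ with $g_1, g_2 \in \C(z_0,z_1,z_2)$ satisfying $g_2^2 = g_1^3 + f_1 g_1 + f_2$, together with the convention that $\sigma$ passes through the zero section over the locus where this expression has a pole. Restricting to $\ell$: the rational functions $g_1|_\ell, g_2|_\ell$ are defined as long as $\ell$ is not contained in the polar locus of $g_1$ or $g_2$; I need to rule this out, or rather to observe that even if $\ell$ meets the bad locus, a section of an elliptic fibration is determined by its restriction to any dense open set, and over the generic point of $\ell$ the hypothesis that $\ell$ is not a component of the discriminant curve guarantees $\pi_\ell^{-1}(\text{generic point})$ is a genuine elliptic curve, so the restricted point $(g_1|_\ell, g_2|_\ell)$ (or the zero section) is a well-defined element of $E_\ell(\C(\ell))$. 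Thus the restriction map $\rho: \MW(\pi) \to \MW(\pi_\ell)$ is well-defined as a map of sets.

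That $\rho$ is a homomorphism is then the statement that the group law is compatible with specialization of the coefficient field. This I would deduce from the fact that the addition formulas on a Weierstrass cubic are given by universal rational functions in the coordinates of the two points and the coefficients $f_1, f_2$; applying the specialization $\C(z_0,z_1,z_2) \twoheadrightarrow \C(\ell)$ (valid since, after normalizing $\ell = \{z_2=0\}$, the images of $f_1, f_2$ are exactly the Weierstrass coefficients of $E_\ell$) commutes with evaluating these rational functions, except possibly at points where a denominator specializes to zero — i.e.\ when the sum of the specialized points lands in a chart where the naive formula degenerates. These exceptional cases are handled by the usual continuity argument: two sections of $\pi_\ell$ that agree on a dense open subset of $\ell$ are equal, and $\rho(\sigma_1) + \rho(\sigma_2)$ and $\rho(\sigma_1 + \sigma_2)$ agree away from a proper closed subset of $\ell$.

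The main obstacle I anticipate is the bookkeeping around the indeterminacy/pole loci: one must be careful that restricting a \emph{rational} section does not secretly lose information or become ill-defined because $\ell$ happens to pass through points where $\sigma$ is not regular, or through singular fibers of $\pi$. The cleanest way around this is to phrase everything in terms of the generic fiber and the field extension $\C(\ell)/\C(z_0,z_1,z_2)$, as above, so that "$\ell$ not a component of the discriminant" is exactly the condition ensuring that $E_\ell$ is still a (smooth, genus $1$) elliptic curve over $\C(\ell)$ with a rational point $\sigma_0|_\ell$, and hence $\MW(\pi_\ell) = E_\ell(\C(\ell))$ makes sense; then functoriality of $E(-)$ under the base-change ring map gives the homomorphism for free, and no explicit addition-formula computation is actually needed.
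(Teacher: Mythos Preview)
Your approach is essentially correct and is a genuinely different route from the paper's. The paper works concretely in weighted projective coordinates: it writes a rational section as $[z_0:z_1:z_2]\mapsto [f:g:z_0h:z_1h:z_2h]$ with $f,g,h$ polynomials, and then uses the Weierstrass relation $g^2=f^3+Pfh^4+Qh^6$ to show by an explicit divisibility argument that any common factor $p$ of $f,g,h$ can be stripped off (because $p\mid f,g,h$ forces $p^2\mid f$ and $p^3\mid g$). This makes the indeterminacy locus $V(f,g,h)$ finite, so restriction to $\ell$ is a rational map from a curve, hence extends to a morphism; if $h$ vanishes on $\ell$ the image is the zero section. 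The homomorphism property is disposed of in one line: addition is defined fibrewise. Your argument instead passes through the identification $\MW(\pi)\cong E(\C(B))$ and invokes specialization of rational points, which is cleaner conceptually and avoids the explicit $\gcd$ manipulation, at the cost of importing (implicitly) the reduction theory of elliptic curves over a DVR.

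One genuine wobble in your write-up: you call $\C(\ell)/\C(z_0,z_1,z_2)$ a ``field extension'' and speak of a ``base-change ring map'' $\C(z_0,z_1,z_2)\twoheadrightarrow \C(\ell)$. Neither is correct as stated --- $\C(\ell)$ has smaller transcendence degree, and setting $z_2\mapsto 0$ is not a homomorphism on the whole function field (think of $1/z_2$). The right framing is exactly the one you gesture at but do not name: let $R=\cO_{\Ps^2,\eta_\ell}$ be the DVR at the generic point of $\ell$, with fraction field $\C(\Ps^2)$ and residue field $\C(\ell)$. The hypothesis that $\ell$ is not a component of the discriminant says $E$ has good reduction over $R$, so $E(\C(\Ps^2))=\mathcal{E}(R)\to \widetilde{E}(\C(\ell))$ is the standard reduction homomorphism. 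This also cleanly handles the case you flag as an ``obstacle'' (poles of $g_1,g_2$ along $\ell$): a short valuation computation using $g_2^2=g_1^3+f_1g_1+f_2$ with $v(f_i)\geq 0$ shows that either both $g_i$ are regular along $\ell$ or the point reduces to $O$. Once you say this, your argument is complete and arguably more transparent than the paper's; the paper's virtue is that it is entirely elementary and self-contained.
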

\begin{proof}
A rational section $\sigma: \Ps^2 \dashrightarrow Y$ can be represented as \[[z_0,z_1,z_2] \mapsto [ f,g, z_0h,z_1h,z_2h]\] where $f$, $g$, $h$ are homogeneous polynomials in $z_0$,$z_1$ and $z_2$ such that $\deg(f)=2\deg(h)+2$ and $\deg(g)=3\deg(h)+3$. 

The indeterminacy locus of such a rational map is contained in $V(f,g,h)$, the locus where $f,g,h$  vanish. We prove now that we can find representatives $f,g,h$ such  that $\gcd(f,g,h)=1$, i.e., the indeterminacy locus of $\sigma$ is finite. This implies that $\sigma|_\ell$ is a well-defined rational section.

Assume we have an irreducible non-constant polynomial $p$ dividing $f,g,h$. Then $p^3$ divides
\[ f^3+P fh^4+Q h^6=g^2,\]
hence $p^4$ divides $g^2$. Write $g=p^2g_1$, $h=ph_1$ and $f=pf_1$. Now,
\[ p^4g_1^2= g^2= f^3+Pfh^4+Q h^6=p^3(f_1^3+Af_1h_1^4p^2+Bh_1^6 p^3). \]
From this it follows that $p$ divides $f_1$ and $p^2$ divides $f$. This implies that $p^6$ divides $f^3+P fh^4+Q h^6=g^2$, hence $p^3$ divides $g$ and
\[[z_0,z_1,z_2] \mapsto [ f/p^2,g/p^3, z_0h/p,z_1h/p,z_2h/p]\]
is a well-defined rational map, represented by polynomials and equivalent to the rational map
$[z_0,z_1,z_2] \mapsto [ f,g, z_0h,z_1h,z_2h]$.

Iterating this process shows that we may assume that $V(f,g,h)$ is a finite set. Since the indeterminacy locus of $\sigma$ is finite, the restriction  $\sigma|_\ell: \ell\to Y_\ell$ is a rational map, which can be extended to a morphism, since $\ell$ is a curve. If $h$ does not vanish at all points of $\ell$ then $\sigma_\ell$ is a section. If $h$ does vanish along $\ell$, then the image of $\sigma_\ell$ lies in $Y_\ell\cap \{z_0=z_1=z_2=0\} = \{(1:1:0:0:0\}$, i.e, the image of $\sigma|_\ell$ coincides with the image of the zero-section. This yields the existence of the map $\MW(\pi)\to \MW(\pi_\ell)$.

Finally, to see that this map is a group homomorphism, note that on both $Y$ and $Y_\ell$ the addition of sections is defined fiber-wise.
\end{proof}

The following result is probably known to the experts, but we did not find a reference for this in the literature.
\begin{proposition}\label{prpSpec}
Let $\Delta_{\red}\subset \Ps^2$ be the reduced curve defined by the vanishing of $4A^3+27B^2$. 
Let $\ell\subset \Ps^2$ be a very general line. Then the map $\MW(\pi)\to \MW(\pi_\ell)$ is injective.

Moreover, suppose that $\Delta_{\red}$ is neither a union of lines nor an irreducible conic. Then there exists a line $\ell$ such that 
\begin{enumerate}
\item $\ell$ is tangent to $\Delta_{\red}$ at some point.
\item $\ell$ intersects $\Delta_{\red}$ in at least two distinct points.
\item The natural map
\[\MW(\pi)\to \MW(\pi_\ell)\]
is injective.
\end{enumerate}
\end{proposition}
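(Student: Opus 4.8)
The plan is to separate the two assertions. For the first, injectivity of $\MW(\pi)\to\MW(\pi_\ell)$ for a very general line, I would argue as follows. The Mordell-Weil group $\MW(\pi)\cong E(\C(z_1,z_2))$ is finitely generated modulo torsion only after we know its rank is finite; but in any case a section $\sigma$ lies in the kernel iff its restriction to $\ell$ is the zero section, i.e.\ iff the divisor (or closed subscheme) $D_\sigma\subset Y$ cut out by $\sigma$ meets every such $\ell$ only along the zero-section. Concretely, writing $\sigma$ by polynomials $[f,g,z_0h,z_1h,z_2h]$ with $\gcd(f,g,h)=1$ as in the previous lemma, the restriction $\sigma|_\ell$ is the zero section precisely when $h$ vanishes identically on $\ell$; since $h$ is a fixed nonzero polynomial, this happens only for lines $\ell$ contained in $V(h)$, a proper closed subset of the dual $\Ps^2$. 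A given nonzero $\sigma$ therefore restricts nontrivially on a Zariski-open set of lines; intersecting over a set of generators of $\MW(\pi)$ (which we may take countable, $\MW(\pi)$ being finitely generated, or handle by a countable-union argument) shows that a very general line is injective on all of $\MW(\pi)$. One must be slightly careful that ``zero on $\ell$'' could also happen if the closure of $\sigma$ degenerates, but the $\gcd=1$ normalization from the earlier lemma rules that out.

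For the ``moreover'', the point is to produce a \emph{single} line $\ell$ with the geometric tangency/secancy properties (1)--(2) that still lies in the injectivity locus. The injectivity locus $U\subset(\Ps^2)^\vee$ is, by the first part, the complement of a \emph{countable} union of proper closed subsets, hence still dense (indeed very general). So it suffices to show that the set of lines satisfying (1) and (2) is not contained in any countable union of proper subvarieties of $(\Ps^2)^\vee$ — equivalently, that it contains a subvariety of positive dimension, or more simply that it is Zariski-dense in $(\Ps^2)^\vee$. The tangent lines to $\Delta_{\red}$ form the dual curve $\Delta_{\red}^\vee$, which is one-dimensional; among these, the lines that are \emph{bitangent} or meet $\Delta_{\red}$ in a second distinct point form a dense subset of $\Delta_{\red}^\vee$ \emph{unless} $\Delta_{\red}$ is so special that every tangent line meets it only at the point of tangency. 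This exceptional situation forces $\Delta_{\red}$ to be a line or an irreducible conic (a smooth conic has every tangent meeting it only at the tangency point with multiplicity $2$; a line is its own tangent). Thus, under the stated hypothesis that $\Delta_{\red}$ is neither, the generic tangent line already meets $\Delta_{\red}$ in a further point, so the locus of lines satisfying (1)--(2) contains a dense open subset of the one-dimensional variety $\Delta_{\red}^\vee$. Intersecting this with the co-countable injectivity locus $U$ gives infinitely many lines with all three properties; pick one.

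The main obstacle, and the place requiring genuine care, is the ``moreover'' part: one must verify that the classification of exceptional curves is exactly ``line or irreducible conic'', i.e.\ that for every other reduced plane curve $\Delta_{\red}$ the generic tangent line is genuinely secant (meets it again, or is bitangent). I would handle this by the standard analysis of the Gauss map $\Delta_{\red}\dashrightarrow\Delta_{\red}^\vee$: for a reduced curve of degree $\ge 2$ that is not a smooth conic, either $\Delta_{\red}$ is singular — and then a general tangent line at a smooth point, having contact order $2$ there, meets the rest of the curve (degree $\ge 2$ remaining) and in particular passes through or near other branches, giving a second intersection point — or $\Delta_{\red}$ is smooth of degree $\ge 3$, where a general tangent has contact exactly $2$ and hence $\deg-2\ge 1$ further intersection points. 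One should also rule out the degenerate possibility that the ``second point'' always coincides with the tangency point; this is precisely ruled out once the contact order at a general point is exactly $2$ (no general inflection), which holds outside the conic case. A clean way to package all of this is to intersect the dual curve $\Delta_{\red}^\vee$ with the locus of lines through a fixed general point and count, or to invoke that a general line through a general point of $\Delta_{\red}$ meets $\Delta_{\red}$ transversally in $\deg\Delta_{\red}$ points while a general tangent line loses exactly one. The secondary technical point — that the injectivity locus is co-countable rather than merely co-finitely-many-closed-sets — is handled by noting $\MW(\pi)$ is finitely generated (or, if one prefers not to assume that yet, by the fact that a countable intersection of dense opens in an irreducible variety over $\C$ is still dense), so that it genuinely meets the positive-dimensional family from the tangency analysis.
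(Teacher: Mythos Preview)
Your proposal is correct and follows essentially the same approach as the paper: both argue that each nonzero section $\sigma$ (written in the normalized form $[f,g,z_0h,z_1h,z_2h]$ with $\gcd(f,g,h)=1$) restricts to zero only on the finitely many lines contained in $V(h)$, so the non-injective locus in $(\Ps^2)^\vee$ is a countable union of proper closed subsets, and then intersect this co-countable injectivity locus with the (uncountable) family of tangent-secant lines. Your treatment of the ``moreover'' clause is in fact more detailed than the paper's, which simply asserts without argument that under the stated hypothesis there are uncountably many lines satisfying (1) and (2); note however that the excluded case should be ``union of lines'' rather than a single line, since for a union of lines every tangent is a component of $\Delta_{\red}$ itself.
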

\begin{proof}
It suffices to prove that there are at most countable many lines such that
\[ \MW(\pi)\to \MW(\pi_\ell)\]
is not injective, since if $\Delta_{\red}$ is not the union of lines nor a conic then there are uncountable many lines that satisfy the first and second property the results follows.

Let $r=\rank \MW(\pi)$. Write $\MW(\pi)_{\tor}=\{\tau_1,\dots,\tau_k\}$. Fix $\sigma_1,\dots,\sigma_r\in \MW(\pi)$ such that the $\sigma_i$ and $\tau_j$ generate $\MW(\pi)$.

Consider a section $\sigma=\tau_j+\sum_{i=1}^r n_i \sigma_i\in \MW(\pi)$. Write $\sigma$ as 
\[[z_0,z_1,z_2] \mapsto [ f,g, z_0h,z_1h,z_2h]\] where $f$, $g$, $h$ are homogeneous polynomials in $z_0$,$z_1$ and $z_2$ such that $\deg(f)=2\deg(h)+2$ and $\deg(g)=3\deg(h)+3$. 

If $\sigma$ lies in the kernel of $\MW(\pi)\to \MW(\pi_\ell)$ then $h$ needs to vanish along $\ell$. In particular, there are at most finitely many lines $\ell$ such that $\sigma$ is mapped to zero in $\MW(\pi_\ell)$. Since $\MW(\pi)$ is countable there are at most countably many lines $\ell$ for which some $\sigma$ is mapped to zero, i.e., for which the map $\MW(\pi)\to \MW(\pi_\ell)$ is not injective.
\end{proof}

\begin{corollary}\label{corMWjGen}
Suppose $\pi: X \to B$ is an elliptic threefold of degree 1, $j(\pi)$ is constant and $j(\pi)\neq 0,1728$. Then $\MW(\pi)=(\Z/2\Z)^2$.
\end{corollary}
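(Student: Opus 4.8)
The plan is to combine Lemma~\ref{lem4tor} (which gives a lower bound) with the specialization result Proposition~\ref{prpSpec} and the classification of rational elliptic surfaces with constant $j$-invariant (Proposition~\ref{propSurGen}), which will give a matching upper bound. Since $j(\pi)\neq 0,1728$, we know $Y$ has a defining equation $y^2=x^3+AP^2x+BP^3$ with $P\in\C[z_0,z_1,z_2]_2$ and $A,B\in\C\setminus\{0\}$. First I would observe that Lemma~\ref{lem4tor}, applied to the generic fiber $E$ over $\C(B)$, already shows $(\Z/2\Z)^2\subseteq\MW(\pi)$. So the entire content is the reverse inclusion: no further sections exist.

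For the upper bound I would pass to a very general line $\ell\subset\Ps^2$. By Proposition~\ref{prpSpec}, the restriction map $\MW(\pi)\to\MW(\pi_\ell)$ is injective for very general $\ell$. Here $\pi_\ell: Y_\ell\to\ell\cong\Ps^1$ is the rational elliptic surface $y^2=x^3+A(P|_\ell)^2x+B(P|_\ell)^3$. Since $P$ is homogeneous of degree $2$ in three variables and $\ell$ is general, $P|_\ell$ is a nonzero homogeneous polynomial of degree $2$ in the two coordinates of $\ell$; one checks $P|_\ell$ is not a perfect square for general $\ell$ (equivalently $\ell$ is not tangent to the conic $P=0$, nor passes through a singular point of it), so this Weierstrass equation is minimal. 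Hence $\pi_\ell$ is exactly of the shape considered in Proposition~\ref{propSurGen}, which asserts $\MW(\pi_\ell)\cong(\Z/2\Z)^2$. Combining, $\MW(\pi)\hookrightarrow(\Z/2\Z)^2$, so together with the lower bound $\MW(\pi)\cong(\Z/2\Z)^2$.

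The one point requiring a little care — and the likely main obstacle — is verifying that for a very general line $\ell$ the restricted surface $Y_\ell$ genuinely falls under Proposition~\ref{propSurGen} rather than degenerating: we need $P|_\ell$ to be a nonzero degree-$2$ polynomial with distinct roots, so that after absorbing it we are in the case "$j(\pi_\ell)$ constant, $\neq 0,1728$, with $\deg f = 2$" rather than, say, $\deg f = 1$ or $f$ a perfect square (which would change the Kodaira fiber types and potentially the Mordell-Weil group). Since we assumed $Y$ is not obtained by the cone construction, the discriminant is not a union of lines through a point; more to the point, $P=0$ is a genuine (reduced, hence smooth by the earlier remarks in the Notation section) conic, and the set of lines tangent to it or through a point of it is a proper closed subset of the dual $\Ps^2$, hence avoided by a very general $\ell$. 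For such $\ell$ the equation $y^2=x^3+A(P|_\ell)^2x+B(P|_\ell)^3$ is minimal and Proposition~\ref{propSurGen} applies verbatim. This completes the proof.
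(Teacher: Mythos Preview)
Your proof is correct and follows essentially the same approach as the paper: invoke Lemma~\ref{lem4tor} for the lower bound, then combine Proposition~\ref{prpSpec} with Proposition~\ref{propSurGen} for the upper bound. Your additional care in verifying that $P|_\ell$ has distinct roots for a very general line (so that Proposition~\ref{propSurGen} actually applies to $\pi_\ell$) is a detail the paper leaves implicit.
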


\begin{proof} From Lemma~\ref{lem4tor} it follows that $(\Z/2\Z)^2\subset \MW(\pi)$.
From Propositions~\ref{propSurGen} and~\ref{prpSpec} it follows that $\MW(\pi)\subset (\Z/2\Z)^2$, which yields the corollary.
\end{proof}

\begin{corollary}\label{corMWj1728}
Suppose $\pi: X \to B$ is an elliptic threefold of degree 1, $j(\pi)$ is constant and equals $1728$. Then $\MW(\pi)$ is one of the following:
\[(\Z/2\Z)\times \Z^{r}, (\Z/2\Z)^2 \]
with $ r\in\{0,2,4\}$.
\end{corollary}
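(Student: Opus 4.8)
The plan is to argue exactly as in the proof of Corollary~\ref{corMWjGen}, combining a lower bound on the torsion, a parity constraint on the rank, and the specialization map to a rational elliptic surface. First I would record the elementary inputs. The generic fiber $E$ of $\pi$ is an elliptic curve over $\C(B)$ with $j(E)=1728$; since $\C(B)$ has characteristic $0$, Lemma~\ref{lem2tor} gives $\Z/2\Z\subseteq\MW(\pi)\cong E(\C(B))$. Moreover $j(\pi)=1728$ means $E$ has complex multiplication, so Lemma~\ref{evenrank} will show $\rank_{\Z}\MW(\pi)$ is even, once we know this rank is finite.

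Next I would specialize. Because $j(\pi)$ is the constant $1728$, the Weierstrass model is $y^2=x^3+Qx$ and its discriminant is a nonzero scalar multiple of $Q^3$, so $\Delta_{\red}=\{Q=0\}$. A very general line $\ell\subset\Ps^2$ is then not a component of $\Delta_{\red}$, hence $\pi_\ell\colon Y_\ell\to\ell\cong\Ps^1$ is a rational elliptic surface whose generic fiber is the restriction of $E$ and therefore again has $j$-invariant $1728$; by Proposition~\ref{prpSpec} I may moreover take $\ell$ so that $\MW(\pi)\to\MW(\pi_\ell)$ is injective. This injection makes $\MW(\pi)$ a finitely generated subgroup of $\MW(\pi_\ell)$, so in particular $\rank\MW(\pi)\le\rank\MW(\pi_\ell)\le 4$ and Lemma~\ref{evenrank} applies. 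From Proposition~\ref{prpSur1728}, together with the observation that over $\C$ the polynomial $x^3+fx=x(x^2+f)$ has all three roots in $\C(z_0,z_1)$ precisely when $f$ is a perfect square, i.e. for the factorization type $[2,2]$, one reads off that $\MW(\pi_\ell)$ is one of
\[ \Z/2\Z,\quad (\Z/2\Z)^2,\quad \Z/2\Z\times\Z^2,\quad \Z/2\Z\times\Z^4, \]
with $(\Z/2\Z)^2$ occurring only in type $[2,2]$.

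To finish, I would run a routine subgroup argument. Writing $\MW(\pi)\cong\MW(\pi)_{\tor}\oplus\Z^s$ (finitely generated abelian), the torsion part embeds into the torsion of the ambient group and $s\le 4$ is even; by the first paragraph $\Z/2\Z\subseteq\MW(\pi)_{\tor}$. If $\MW(\pi)_{\tor}=(\Z/2\Z)^2$ then the ambient group must be $(\Z/2\Z)^2$, forcing $s=0$, i.e. $\MW(\pi)=(\Z/2\Z)^2$; otherwise $\MW(\pi)_{\tor}=\Z/2\Z$ and $s\in\{0,2,4\}$, i.e. $\MW(\pi)\cong\Z/2\Z\times\Z^r$ with $r\in\{0,2,4\}$. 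Most of this is bookkeeping; the one step that genuinely requires care is the torsion computation for type $[2,2]$ — one must check that the order-$4$ torsion group there is $(\Z/2\Z)^2$ and not $\Z/4\Z$, since $\Z/4\Z$ would otherwise appear spuriously in the list. The perfect-square factorization of $f$ forces the full $2$-torsion to be rational, which settles this, but it should not be omitted; a secondary point to watch is invoking the injection into $\MW(\pi_\ell)$ before citing Lemma~\ref{evenrank}, whose hypothesis is finite rank.
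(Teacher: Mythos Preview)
Your proof is correct and follows essentially the same route as the paper: combine Lemma~\ref{lem2tor}, Lemma~\ref{evenrank}, Proposition~\ref{prpSur1728}, and the specialization injection of Proposition~\ref{prpSpec}. You are in fact slightly more careful than the paper on one point: Proposition~\ref{prpSur1728} records only $\#\MW(\pi_\ell)_{\tor}$, not its group structure, so the paper's claim that $\MW(\pi)$ embeds into $(\Z/2\Z)\times\Z^4$ or $(\Z/2\Z)^2$ tacitly assumes the order-$4$ torsion in type $[2,2]$ is $(\Z/2\Z)^2$ rather than $\Z/4\Z$ --- a point you correctly isolate and settle via the factorization of $x^3+fx$.
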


\begin{proof} From Lemma~\ref{lem2tor} it follows that $(\Z/2\Z)\subset \MW(\pi)$. From Lemma~\ref{evenrank} it follows that the rank is even.
From Propositions~\ref{prpSur1728} and~\ref{prpSpec} it follows that $\MW(\pi)$ is a subgroup of either $(\Z/2\Z)\times \Z^4$ or $(\Z/2\Z)^2$, which yields the corollary.
\end{proof}

\begin{corollary}\label{corMWj0}
Suppose $\pi: X \to B$ is an elliptic threefold of degree 1 and $j(\pi)$ is constant and equals $0$. Then $\MW(\pi)$ is one of the following:
\[\Z^{r_1} , (\Z/3\Z)   \times \Z^{r_2}, (\Z/2\Z)^2 \]
with $r_1\in \{0,2,4,8\}$, $r_2\in \{0,2\}$ .
\end{corollary}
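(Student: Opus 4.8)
The plan is to argue as in the proofs of Corollaries~\ref{corMWjGen} and~\ref{corMWj1728}: combine the parity statement of Lemma~\ref{evenrank} with the specialization map of Proposition~\ref{prpSpec} and the classification of rational elliptic surfaces with constant $j$-invariant in Proposition~\ref{prpSur0}. The new feature when $j(\pi)=0$ is that there is no \emph{a priori} torsion section (contrast Lemmas~\ref{lem4tor} and~\ref{lem2tor}), so one must decide separately which torsion subgroups can occur.

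First I would fix a very general line $\ell\subset\Ps^2$. By Proposition~\ref{prpSpec} the restriction $\MW(\pi)\to\MW(\pi_\ell)$ is injective, and $\pi_\ell$ is a rational elliptic surface with constant $j$-invariant $0$, so by Proposition~\ref{prpSur0} the group $\MW(\pi_\ell)$ is one of
\[ 0,\ \Z/3\Z,\ (\Z/2\Z)^2,\ \Z^2,\ \Z/3\Z\times\Z^2,\ \Z^4,\ \Z^6,\ \Z^8 \]
(the row with torsion of order $4$, coming from two $D_4$ fibres, is $(\Z/2\Z)^2$: there $f$ is a constant times the cube of a binary quadratic, so $x^3+f$ splits completely over the base). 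Hence $\MW(\pi)$ injects into one of these eight groups; by Lemma~\ref{evenrank} its rank is also even, so $\rank\MW(\pi)\in\{0,2,4,6,8\}$ and $\MW(\pi)_{\tor}$ is a subgroup of $(\Z/2\Z)^2$ or of $\Z/3\Z$.

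Next I would pin down the torsion. If $\MW(\pi)$ contains a point of order $2$, then on $y^2=x^3+R$ its $x$-coordinate is a root of $x^3+R$ over $\C[z_0,z_1,z_2]$; since that ring is integrally closed the root is a homogeneous polynomial $\beta$ of degree $2$, with $\beta^3=-R$, and then
\[ x^3+R=x^3-\beta^3=(x-\beta)(x-\zeta\beta)(x-\zeta^2\beta) \]
for $\zeta$ a primitive cube root of unity, so all three $2$-torsion points are rational and $(\Z/2\Z)^2\subseteq\MW(\pi)$. As $(\Z/2\Z)^2$ embeds into none of the eight surface groups except $(\Z/2\Z)^2$ itself, this forces $\MW(\pi)=(\Z/2\Z)^2$. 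Consequently $\MW(\pi)_{\tor}\neq\Z/2\Z$: either $\MW(\pi)=(\Z/2\Z)^2$, or $\MW(\pi)_{\tor}\in\{0,\Z/3\Z\}$. If $\MW(\pi)_{\tor}=\Z/3\Z$ then $\MW(\pi)$ injects into a surface group whose torsion contains $\Z/3\Z$, that is into $\Z/3\Z$ or $\Z/3\Z\times\Z^2$, and since the rank is even this gives $\MW(\pi)\in\{\Z/3\Z,\Z/3\Z\times\Z^2\}$. Finally, if $\MW(\pi)$ is torsion-free it is free of even rank $\le 8$, hence $\cong\Z^{r_1}$ with $r_1\in\{0,2,4,6,8\}$. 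Collecting the cases gives the list of the statement.

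The step I expect to be the real obstacle is excluding the torsion-free rank $6$ case, i.e.\ narrowing $r_1$ to $\{0,2,4,8\}$. This does not follow from specialization: if $R=L^2Q$ with $L$ a line and $Q$ a general quartic --- an $R$ satisfying all the hypotheses, and with no torsion section since $L^2Q$ is neither a square nor a cube --- then a very general line restricts $\pi$ to the factorization type $[2,1,1,1,1]$, whose surface Mordell--Weil group is $\Z^6$, so only $r_1\le 8$ comes out here. Ruling out $r_1=6$ therefore needs the computation of $h^4(Y)_{\prim}$ through the local-cohomology methods of Sections~\ref{secCalc}--\ref{secMWrank}, not the specialization argument of this section.
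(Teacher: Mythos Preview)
Your argument is essentially the paper's: specialize to a very general line via Proposition~\ref{prpSpec}, use Proposition~\ref{prpSur0} to bound $\MW(\pi)$ by the surface groups, invoke Lemma~\ref{evenrank} for parity of the rank, and then observe that a single $2$-torsion section on $y^2=x^3+R$ forces $R=-\beta^3$ and hence full $2$-torsion, ruling out $\MW(\pi)_{\tor}=\Z/2\Z$. Your treatment of the $2$-torsion via integral closure is a bit more explicit than the paper's, but the content is the same.

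Your worry about excluding $r_1=6$ is exactly right, and it is not a gap in your argument but a typo in the statement: the paper's own proof of this corollary only yields $r_1\in\{0,2,4,6,8\}$, and in fact $\Z^6$ \emph{does} occur (the nine-cusp sextic in Section~\ref{secCusps}; it is also listed in Theorem~\ref{MWGrpThm} and in the summarizing theorem of Section~\ref{SecPos}). What is later excluded, in Section~\ref{SecPos}, is $r_1=8$, not $r_1=6$. So you should not try to rule out rank $6$ here; the corollary is meant to read $r_1\in\{0,2,4,6,8\}$.
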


\begin{proof} 
From Propositions~\ref{prpSur1728} and~\ref{prpSpec} it follows that $\MW(\pi)$ is a subgroup of either $\Z^8$,   $(\Z/2\Z)^2$ or $\Z/3\Z \times \Z^2$.
From Lemma~\ref{evenrank} it follows that the rank is even. 

To prove the corollary we have to exclude the group $\Z/2\Z$. Suppose $\pi$ has a section of order two, i.e., $Y$ is given by an equation of the from $y^2=x^3-T^3$ and $[z_0,z_1,z_2]\mapsto [T,0,z_0,z_1,z_2]$ is a section of order 2. Then  for $i=1,2$ the morphisms $[z_0,z_1,z_2]\mapsto [\omega^i T,0,z_0,z_1,z_2]$ define also sections of order 2, where $\omega^2=-\omega-1$, hence we have complete two-torsion. In particular, $ \Z/2\Z$ does not occur as possible Mordell-Weil group.
\end{proof}

Later on we will show that the cases $r=4$, $r_1=8$ and $r_2=2$ can only occur in the cone construction case. At this point we will show that in these cases the curve $C$ is a union of lines, but not necessarily through one point.

Assume that the $j$-invariant is constant. Then it is well-known that $\rank \MW(\pi_\ell)$ equals $2a-4$, where $a$ is the number of singular fibers of $\pi_\ell$, i.e., $a=\# C \cap \ell$ (counted without multiplicities).

\begin{lemma}\label{lemj1728lines} Suppose $C$ is not the union of lines. Then $y^2=x^3+Px$ has Mordell-Weil rank at most 2.
\end{lemma}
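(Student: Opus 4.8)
The plan is to bound $\rank\MW(\pi)$ for $\pi:X\to B$ given by $y^2=x^3+Qx$ (the case $j(\pi)=1728$) by comparing with restrictions to lines, and then to exclude the possibility $\rank\MW(\pi)=4$ when $C=\{Q=0\}$ is not a union of lines. By Corollary~\ref{corMWj1728} the rank is $0$, $2$, or $4$, so the content of the lemma is that $\rank\MW(\pi)=4$ forces $C$ to be a union of lines. By Proposition~\ref{prpSur1728}, a rational elliptic surface with $j=1728$ has $\rank\MW(\pi_\ell)=4$ exactly when the discriminant has factorization $[1,1,1,1]$, i.e.\ when $\pi_\ell$ has four fibers of type $III$ (lattice $4A_1$). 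So the first step is to record that for a very general line $\ell$, injectivity of $\MW(\pi)\to\MW(\pi_\ell)$ (Proposition~\ref{prpSpec}) together with $\rank\MW(\pi)=4$ forces $\rank\MW(\pi_\ell)=4$, hence $C\cap\ell$ consists of exactly four distinct points and the quartic $Q|_\ell$ has four simple roots for a very general $\ell$; this is automatic since a general line meets a quartic transversally in four points, so this alone gives no contradiction.

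The real leverage comes from choosing $\ell$ to be a tangent line, as provided by the second part of Proposition~\ref{prpSpec}. Assuming $C$ is not a union of lines and not an irreducible conic — but in the $j=1728$ case $\deg C=4$, so if $C$ is not a union of lines it has an irreducible component of degree $\ge 2$ and we may still arrange a line $\ell$ that is tangent to $\Delta_{\red}=C_{\red}$ at a smooth point and meets $C_{\red}$ in at least two points, with $\MW(\pi)\hookrightarrow\MW(\pi_\ell)$ injective — I would take such an $\ell$. Then $\ell\cap C_{\red}$ has at most three distinct points (a tangency point counted once, plus the residual intersection), so $a=\#(\ell\cap C_{\red})\le 3$ and hence $\rank\MW(\pi_\ell)=2a-4\le 2$. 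Since $\MW(\pi)$ injects into $\MW(\pi_\ell)$ and the rank is even, $\rank\MW(\pi)\le 2$. This is the crux: the tangency at a smooth point drops the number of distinct intersection points of a degree-$4$ curve with a line from $4$ to at most $3$, and the surface theory converts that into a rank bound.

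The main obstacle I anticipate is handling the degenerate shapes of $C$ carefully: one must make sure the hypothesis ``$C$ is not a union of lines'' genuinely yields a line $\ell$ that is tangent to $C_{\red}$ at a smooth point \emph{and} meets $C_{\red}$ in $\ge 2$ points \emph{and} is not a component of the discriminant, so that $Y_\ell$ is a genuine rational elliptic surface and Proposition~\ref{prpSpec}(3) applies. If $C_{\red}$ is an irreducible conic, its tangent lines meet it in only one point, so the second part of Proposition~\ref{prpSpec} does not directly apply; but in that case $Q$ is (up to scalar) a power of an irreducible quadratic, and one checks directly from Proposition~\ref{prpSur1728} — or from the structure of $Y$ — that $\rank\MW(\pi)=0$ (this is the ``double conic'' case). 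The remaining configurations — conic plus one or two components, or an irreducible cuspidal/nodal cubic plus a line, or an irreducible quartic — all admit a tangent line at a smooth point meeting $C_{\red}$ in at least two points, so the argument above applies and gives $\rank\MW(\pi)\le 2$. I would therefore split into the case $C_{\red}$ an irreducible conic (direct check, rank $0$) and the general case (tangent-line argument, rank $\le 2$), concluding $\rank\MW(\pi)\le 2$ in all cases where $C$ is not a union of lines.
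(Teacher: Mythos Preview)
Your proposal is correct and follows essentially the same route as the paper: split off the case where $C_{\red}$ is an irreducible conic (equivalently $Q=P_0^2$), where a very general line gives factorization $[2,2]$ and hence rank $0$ by Proposition~\ref{prpSur1728}, and otherwise invoke the second part of Proposition~\ref{prpSpec} to find a tangent line $\ell$ with $\MW(\pi)\hookrightarrow\MW(\pi_\ell)$ and $\#(\ell\cap C_{\red})\le 3$, forcing a fiber of type $I_0^*$ or $III^*$ and hence $\rank\MW(\pi_\ell)\le 2$. Your extra discussion of which configurations of $C$ admit such a tangent line is already absorbed into the hypothesis of Proposition~\ref{prpSpec} (namely that $\Delta_{\red}=C_{\red}$ is neither a union of lines nor an irreducible conic), so it can be omitted.
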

\begin{proof}
If $P$ is of the form $P_0^2$ for some irreducible polynomial $P_0$ of degree 2. Then for a  very general line $\ell$ the elliptic surface $\pi_\ell$ is an elliptic surface with $2I_0^*$ fibers, and therefore has rank 0. Since for a very  general line the map $\MW(\pi)\to \MW(\pi_{\ell})$ is injective (Proposition~\ref{prpSpec}) we have that $\rank \MW(\pi)=0$.

Otherwise we can apply the second part of Proposition~\ref{prpSpec}. Let $\ell$ be a line satisfying the properties mentioned in this proposition. Then $\pi_{\ell}$ has $j=1728$ and one fiber either of type $I_0^*$ or $III^*$ and hence by Proposition~\ref{prpSur1728} has rank at most 2. Since $\MW(\pi)\to \MW(\pi_\ell)$ is injective, it follows that $\MW(\pi)$ has rank at most 2.   \end{proof}

\begin{lemma}\label{lemj0lines} Suppose $C$ is not the union of lines. Then $y^2=x^3+Q$ has Mordell-Weil rank at most 6.
\end{lemma}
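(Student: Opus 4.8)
The plan is to argue exactly as in the proof of Lemma~\ref{lemj1728lines}, by specializing to a suitable line. Recall from Proposition~\ref{prpSpec} that the restriction map $\MW(\pi)\to\MW(\pi_\ell)$ is injective for a very general line $\ell$, and that if $C_{\red}$ is neither a union of lines nor an irreducible conic then there is even a line $\ell$ which is \emph{tangent} to $C_{\red}$ at some point, meets $C_{\red}$ in at least two points, and still has $\MW(\pi)\to\MW(\pi_\ell)$ injective. Recall also the formula used just before Lemma~\ref{lemj1728lines}: for a line $\ell$ not contained in $C$, the surface $\pi_\ell$ is a rational elliptic surface with $j(\pi_\ell)=0$ and $\rank\MW(\pi_\ell)=2a-4$, where $a=\#(\ell\cap C_{\red})$. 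Hence, if $\ell$ is tangent to $C_{\red}$ at a point and is not a component of $C$, then $a\le 5$ (the tangency point absorbs at least two of the six intersection points of $\ell\cap C$ counted with multiplicity), so $\rank\MW(\pi_\ell)\le 2\cdot 5-4=6$; together with injectivity this will force $\rank\MW(\pi)\le 6$.

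Concretely I would split into two cases. If $C_{\red}$ is an irreducible conic --- the one case not handled by the second part of Proposition~\ref{prpSpec}, the case $C_{\red}$ a union of lines being excluded by hypothesis --- then already a very general line $\ell$ meets $C_{\red}$ in $a=2$ points, so $\rank\MW(\pi_\ell)=0$, and by injectivity $\rank\MW(\pi)=0\le 6$. Otherwise $C_{\red}$ is neither a union of lines nor an irreducible conic; since here $\Delta_{\red}=C_{\red}$, the second part of Proposition~\ref{prpSpec} produces a tangent line $\ell$ with the three listed properties, and choosing it (as we may, see below) not to be a component of $C$, the computation in the previous paragraph gives $\rank\MW(\pi_\ell)\le 6$, hence $\rank\MW(\pi)\le 6$.

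The one technical point --- and the step I would flag as the main thing to be careful about --- is that $\ell$ must be taken away from the finitely many lines that are components of $C$, so that $\pi_\ell$ is genuinely a rational elliptic surface and the rank formula applies. This is not a serious obstacle: the proof of Proposition~\ref{prpSpec} produces an uncountable family of lines satisfying its properties (1) and (2), only countably many of which fail property (3), and only finitely many of which are components of $C$; so an admissible $\ell$ exists. (One could also phrase the whole argument through Corollary~\ref{corMWj0}: since the only ranks occurring there are $0,2,4,8$, it suffices to exclude $\MW(\pi)\cong\Z^8$, and the specialization to a tangent line does exactly that.) Apart from this bookkeeping, the proof is word for word that of Lemma~\ref{lemj1728lines}, with the bound $2\cdot 5-4=6$ in place of $2\cdot 3-4=2$.
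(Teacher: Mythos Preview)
Your proof is correct and follows essentially the same approach as the paper's own proof. The paper likewise splits off the case where the sextic is the cube of an irreducible conic (equivalently, $C_{\red}$ is an irreducible conic) and handles the remaining case via the second part of Proposition~\ref{prpSpec}; the only cosmetic difference is that the paper phrases the rank bound by naming the resulting special fiber type ($IV$, $I_0^*$, $IV^*$ or $II^*$) rather than invoking the formula $\rank\MW(\pi_\ell)=2a-4$, and the paper leaves implicit the bookkeeping about avoiding line components that you spell out.
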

\begin{proof}
The proof is similar to the previous lemma. If $Q$ is a quadratic polynomial to the power three then for a general line $\pi_\ell$ is an elliptic surface with 2 $I_0^*$ fibers has therefore rank 0 and hence $\rank \MW(\pi)=0$.

Otherwise we can apply Proposition~\ref{prpSpec}. Using this we find a line $\ell$ such that $\pi_\ell$ has a fiber of type $IV, I_0^*, IV^*$ or $II^*$, and therefore $\rank \MW(\pi)_\ell \leq 6$, and such that $\MW(\pi) \to \MW(\pi_{\ell})$ is injective.\end{proof}

\begin{lemma}\label{lem3tors}
Suppose $j=0$ then $3\mid \#\MW(\pi)_{\tor}$ if and only if $Y$ is given by an equation of the form $y^2=x^3+f^2$, where $f$ is a cubic polynomial. 

If $(\Z/3\Z)\times \Z^2$ is a subgroup of $\MW(\pi)$ then $f=0$ is a union of  lines.
\end{lemma}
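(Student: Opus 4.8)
The plan is to prove the two assertions of Lemma~\ref{lem3tors} separately, first the characterization of when $3$ divides the torsion, and then the structure of $f$ when the full group $(\Z/3\Z)\times\Z^2$ embeds in $\MW(\pi)$.

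For the first assertion, I would argue in the function field $\C(z_1,z_2)$. A point $(x,y)\in E(\C(z_1,z_2))$ of order $3$ is exactly an inflection point of the plane cubic $y^2=x^3+f^2$; equivalently, the tangent line at $(x,y)$ meets $E$ only at that point. Writing the $3$-division polynomial $\psi_3(x) = 3x(x^3 + 4f^2) = 3x^4 + 12xf^2$ for the curve $y^2 = x^3 + R$ with $R = f^2$, a nonzero $3$-torsion point has $x$-coordinate a root of $x^4 = -4f^2 x$, i.e. either $x = 0$ (giving $y^2 = f^2$, so $y = \pm f$, which requires $f$ to be a square in $\C(z_1,z_2)$ — impossible for a genuine cubic unless... wait, $f$ itself need not be a square, but $y = \pm f$ is automatically in the function field, so $(0,\pm f)$ are always $3$-torsion points) or $x^3 = -4f^2$, which forces $f^2$ to be a cube, i.e. $f$ a cube of a linear form, a case excluded under our standing assumptions or folded into the cone construction. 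So the ``only if'' direction is the content: if $3\mid\#\MW(\pi)_{\tor}$ then the Weierstrass form must be (up to the scaling $R\mapsto \lambda^2 R$ allowed by the group law normalization) of the shape $y^2 = x^3 + f^2$; here I would compare with the general equation $y^2 = x^3 + R$, $\deg R = 6$, and observe that a $3$-torsion section with $x = 0$ forces $R$ to be a perfect square $f^2$ with $\deg f = 3$. The ``if'' direction is immediate: $(0, f)$ and $(0, -f)$ are the nonzero elements of a $\Z/3\Z$ inside $\MW(\pi)$.

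For the second assertion, suppose $(\Z/3\Z)\times\Z^2\subset\MW(\pi)$. By the first part, $Y$ is $y^2 = x^3 + f^2$ with $f$ a cubic. Now I would specialize to a very general line $\ell\subset\Ps^2$ and use Proposition~\ref{prpSpec}: the restriction $\MW(\pi)\to\MW(\pi_\ell)$ is injective, so $(\Z/3\Z)\times\Z^2\subset\MW(\pi_\ell)$. Consulting the Oguiso–Shioda table in Proposition~\ref{prpSur0}, the only rational elliptic surface with $j = 0$ whose Mordell–Weil group contains $(\Z/3\Z)\times\Z^2$ is the one with factorization type $[2,2,2]$, i.e. $\MW(\pi_\ell)\cong (\Z/3\Z)\times\Z^2$ exactly, arising when $f(z_0,z_1)|_\ell$ is a square times a cube, or rather when the sextic $f^2|_\ell$ is a product of three distinct double factors; for that, $f|_\ell$ must itself factor as a product of three distinct linear forms (as $f|_\ell$ is a cubic binary form, and $f^2|_\ell$ having three double roots means $f|_\ell$ has three simple roots). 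Since this holds for a very general line $\ell$, the cubic curve $C = \{f = 0\}\subset\Ps^2$ meets a general line in three collinear points that are ``split'', and more to the point, for $C$ to have the property that its restriction to every general line splits completely into distinct linear factors, $C$ must be a union of three lines. (One way to see this: if $C$ were irreducible it would be a plane cubic meeting a general line in a Galois-trivial way only if it is degenerate; a cleaner route is to note that the discriminant of $f|_\ell$ as a function of $\ell$ must vanish identically or the surface $\pi_\ell$ would not generically have three $I_0^*$ fibers — and a ternary cubic whose every line-restriction has three distinct roots is forced to be totally reducible.)

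The main obstacle I anticipate is the last step: deducing from ``$f^2|_\ell$ has exactly three double points for general $\ell$'' that $f$ is a union of lines. The cleanest argument is via the singular fibers: the condition that $\MW(\pi_\ell)$ has a $\Z/3\Z$ and rank $2$ pins down the fiber configuration to $3\times IV$ (type $[2,2,2]$ in the table), and three fibers of type $IV$ for a general pencil of lines through the surface means the discriminant sextic $f^2$ is three times a conic... no — it means $f^2$ restricted to $\ell$ has three double roots, so $\{f = 0\}\cap\ell$ is three distinct reduced points for general $\ell$, which is automatic for any cubic $C$ not containing a general line; the real constraint is that these three points, together with the branching data, force the surface to be the specific $[2,2,2]$ one for \emph{every} general $\ell$, and translating ``every general hyperplane section is of type $[2,2,2]$'' back to a statement about $C$ requires care. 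I would handle this by instead running the argument contrapositively: if $f$ is not a union of lines then (as in Lemmas~\ref{lemj1728lines} and~\ref{lemj0lines}) one can choose $\ell$ tangent to $C = \{f = 0\}$, producing a fiber of $\pi_\ell$ of type worse than $IV$ (type $I_0^*$, $IV^*$, or $II^*$), whose Mordell–Weil group over $\overline{\C(t)}$ does \emph{not} contain $\Z/3\Z\times\Z^2$ — indeed from Proposition~\ref{prpSur0} none of $[3,1,1,1]$, $[4,1,1]$, $[5,1]$ etc.\ simultaneously has $3$-torsion and rank $\ge 2$ — contradicting injectivity of $\MW(\pi)\to\MW(\pi_\ell)$.
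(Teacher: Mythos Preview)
Your final contrapositive for the second assertion is exactly the paper's argument: choose (via Proposition~\ref{prpSpec}) a line $\ell$ tangent to $\{f=0\}$ meeting it in two distinct points, and observe that $\MW(\pi_\ell)$ is then finite. One cosmetic correction: since $R=f^2$, every multiplicity in the restriction $R|_\ell$ is even, so the relevant factorization type is $[4,2]$ (giving $\MW(\pi_\ell)\cong\Z/3\Z$), not $[3,1,1,1]$, $[4,1,1]$, or $[5,1]$. Your initial attempt via a \emph{general} line indeed could not work, for the reason you identified: any reduced cubic meets a general line in three simple points, so type $[2,2,2]$ carries no information about whether $f$ splits into lines.

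For the ``only if'' direction of the first assertion you take a genuinely different route from the paper. The paper specializes to a very general line and reads off from Proposition~\ref{prpSur0} that $R|_\ell$ has only even multiplicities, hence $R$ is a square. You instead compute the $3$-division polynomial $\psi_3(x)=3x(x^3+4R)$ directly. This is a cleaner and more elementary argument, but your dismissal of the root $x^3=-4R$ is done under the assumption $R=f^2$, which is circular for this direction. What you need is: a rational $3$-torsion section with $x_0^3=-4R$ also satisfies $y_0^2=x_0^3+R=-3R$, so $R$ is simultaneously (up to constants) a cube and a square in $\C(z_1,z_2)$, hence a sixth power of a linear form --- the product case excluded by hypothesis. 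With that case closed, only $x_0=0$ remains, forcing $y_0^2=R$ and $R=f^2$ as claimed.
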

\begin{proof}
Suppose $3\mid \# \MW(\pi)_{\tor}$. Since for a very  general line $\ell$ the map $\MW(\pi)\to \MW(\pi_\ell)$ is injective (Proposition~\ref{prpSpec}) it follows from Propositions~\ref{prpSur0}  that for such a  line $\ell$ the intersection $C\cap\ell$ consists of three points with multiplicity 2 or one point with multiplicity 2 and one point with multiplicity 4. Hence $C$ is a double cubic. 

Conversely, if  $Y$ is given by $y^2=x^3+f^2$  then  $x=0,y=f$ defines a section of order 3.

Suppose $f=0$ is not the union of lines. Then $C$ is a reduced cubic. Then by Proposition~\ref{prpSpec} there exists a line $\ell$, such that $\MW(\pi)\to \MW(\pi_\ell)$ is injective and such that $\# \ell \cap C=2$, hence $\MW(\pi_\ell)$ is finite.
\end{proof}

\section{Singularities of quartic and sextic plane curves} \label{secSing}

\subsection{Quartic curves}
The classification of singular quartic curves is well-known. We give a sketch. First assume that $C$ is reduced. Then either $C$ is the union of four lines through one point, or $C$ has at most $ADE$ singularities.  The first case corresponds to the cone construction case, so suppose we are in the latter case. Let $p_1,\dots,p_k$ be the singularities of $C$, let $M_1,\dots,M_k$ be the corresponding Milnor lattices. Then $\oplus M_i$ can be embedded in the lattice corresponding to the affine Dynkin diagram $\tilde{E}_7$. This limits the possibilities to $A_1,\dots,A_7, D_4\dots,D_7$ and $E_6,E_7$.

Assume that $C$ is non-reduced and that $C_{\red}$ is not the union of lines through one point. Then $C$ is either a double line $\ell$ together with a (possible reducible) conic $T$, or a double conic. If $C$ is a double conic, then it has to be irreducible, hence $C_{\red}$ is smooth and $\cP=\emptyset$.
 
Let $q$ be a point of the singular locus of $C_{\red}$. The above discussion shows that $(C,q)$ is one of the following singularities
\begin{itemize}
\item $t^2s$, i.e., $\ell$ and $T$ intersect transversely;
\item $t^2(t-s^2)$,  i.e., $\ell$ is tangent to $T$;
\item $ts$,  ($A_1$ singularity), i.e., $T$ is the union of two lines.
\end{itemize}
Note that in  the second case we have that $\cP$ consists of one point.

 \subsection{Sextic curves}
 Sextic curves have more possible singularities. 
 \begin{theorem}\label{ThmSextic}
 A reduced sextic can have the following singularities \cite{SexticClass}:
\begin{itemize} 
\item $A_k: x^2+y^{k+1}$, $k\leq 19$.
\item $D_k: y(x^2+y^{k-1})$, $k\leq 19$.
\item $E_6: x^3+y^4$.
\item $E_7: x^3+xy^3$.
\item $E_8: x^3+y^5$. 
\item $B_{k,l}: x^k+y^l $, $3\leq k\leq l$. If $k=3$ then  $6\leq l\leq 12$, if $k>3$ then $l\leq 6$.
\item $xB_{k,l}: x(x^k+y^l) $,  $(k,l)\in \{(2,5),(2,7),(3,4),(3,5),(4,5)\}$.
\item $yB_{k,l}: y(x^k+y^l) $,   $(k,l) \in \{(3,4),(3,5),(3,6),(4,5)\}$.
\item $xyB_{k,l}: xy(x^k+y^l) $,  $(k,l) \in \{(2,3),(3,4)\}$.
\item $C_{k,l}: x^k+y^l+x^2y^2$, $k\leq l$, $2/k+2/l\leq 1$ and $k\leq n(l)$, with $n(l)=15,14,14,12,11,11,9$ for $l=3,4,\dots,9$.
\item $yC_{k,l}: y(x^k+y^l+x^2y^2)$. $k\leq l$, $2/k+2/l\leq 1$, $k\in \{3,5\}$. If $k=3$ then $7\leq l\leq 12$.  If $k=5$ then $l \in\{5,6\}$.
\item $D_{k,l}: x^k+y^l+x^2y^3$. $2/p+3/q\leq 1$. If $k=3$ then $9\leq 10\leq 13$. Otherwise $(k,l)\in \{(4,7),(5,6),(5,7),(6,5),(6,6),(6,7)\}$.
\item $F_{k,l}: x^k+y^l+x^2y^3+x^3y^2$. $6\leq k\leq l\leq 7$.
\item $S_{2k-1}:  (x^2+y^3)^2+(a_0+a_1y)xy^{4+k}$. $a_0\neq 0, a_1\in \C, k=1,2,3$.
\item $S_{2k}:  (x^2+y^3)^2+(a_0+a_1y)x^2y^{3+k}$. $a_0\neq 0, a_1\in \C, k=1,2,3$.
\end{itemize}
\end{theorem}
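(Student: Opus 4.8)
The plan is to prove the two halves of such a classification in turn. For realizability — that every germ on the list is attained at some point of some reduced sextic — one argues by examples: each infinite family ($A_k$, $D_k$, $B_{k,l}$, $C_{k,l}$, $D_{k,l}$, $F_{k,l}$, the $S_\ast$ series, and their $x$-, $y$- and $xy$-decorated variants) contains, under the stated degree bounds, only finitely many members, so it suffices to write down for each one an explicit sextic equation — equivalently, to realize the germ as a general member of the corresponding linear subsystem of $|\mathcal O_{\Ps^2}(6)|$ — and to read off the local normal form, say with Singular; semicontinuity of the Milnor and Tjurina numbers makes these finitely many checks rigorous and also prevents the general member from acquiring a worse singularity.

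The substance is the other half: that a singular point of a reduced sextic $C$ is analytically one of the listed germs. Write $m:=\operatorname{mult}_p C$; then $m\le 6$, with $m=6$ forcing $C$ to be six lines through $p$, so assume $m\in\{2,3,4,5\}$. For the simple ($ADE$) germs one can argue analogously to the quartic case treated above, but with the double cover of $\Ps^2$ branched along $C$: this is a $K3$ surface, and the exceptional curves resolving its $ADE$ singularities span a negative-definite sublattice of $\NS$ orthogonal to the polarization, hence of rank at most $19$, which bounds $A_k$ and $D_k$ at $19$ and leaves only $E_6,E_7,E_8$ among the $E$-type germs. For the remaining, non-simple germs the double cover acquires non-canonical singularities and this lattice argument no longer applies, so instead I would resolve $(C,p)$ by successive blow-ups, record the infinitely near points $p=p_0\prec p_1\prec\cdots$ with their multiplicities $m_0\ge m_1\ge\cdots$ — the weighted cluster, or Enriques diagram, of the germ — and constrain them by: the proximity inequalities $m_i\ge\sum_{p_j\,\text{proximate to}\,p_i}m_j$; the multiplicity bound $m_0\le 6$; and the adjunction bound $\sum_{p'\in\Sing C}\delta_{p'}\le p_a(C)+(r-1)=10+(r-1)$, where $\delta_{p'}=\sum_i\binom{m_i}{2}$ is read off the cluster and $r$ is the number of components of $C$, sharpened by the observation that a cluster supported on a single component $C_1$ contributes at most $p_a(C_1)$. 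Propagating these inequalities down the tree of infinitely near points caps the parameters $k,l$ in every series and leaves a finite list of admissible clusters; each is then matched with its analytic normal form, the surviving moduli — such as $a_0\ (\neq 0)$ and $a_1$ in the $S_\ast$ series — being exactly those the remaining linear conditions on the jet of the defining equation do not fix.

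The main obstacle, as is typical, is completeness and bookkeeping rather than any single deep step: one must be certain the analysis exhausts every local configuration — all multiplicities, all shapes of the tangent cone, and all ways in which several branches or several components of $C$ can meet at $p$, including the mildly degenerate ones — and one must handle correctly the passage from an admissible cluster to an analytic type, since the base conditions a cluster imposes on $|\mathcal O_{\Ps^2}(6)|$ are not independent at satellite infinitely near points, so that naive dimension counts mislead. Concretely I would dispatch $m=2$ first (the $A_k$), then $m=3$ (the $D_k$, the $E_6,E_7,E_8$, and the triple-point series with first parameter $3$), then $m=4$ and $m=5$, subdividing each case by the tangent cone, and finally cross-check the resulting list against the explicit sextics from the first half; I would also flag that whether a given type on the list is genuinely attained — as opposed to merely not being excluded by these bounds — can require a further, curve-specific argument.
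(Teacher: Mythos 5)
The first thing to note is that the paper does not prove this statement at all: Theorem~\ref{ThmSextic} is imported verbatim from Ishikawa--Nguyen--Oka \cite{SexticClass}, and the citation is the entire ``proof''. So there is no argument in the paper to compare yours against; your text has to be judged as a free-standing proof of the classification, and as such it is a strategy rather than a proof. The one genuinely complete piece is the bound on the simple singularities: passing to the double cover of $\Ps^2$ branched along $C$, resolving the rational double points to obtain a K3 surface, and bounding the span of the exceptional curves by the rank of the orthogonal complement of the polarization in $\NS$ does correctly give total Milnor number at most $19$, hence $A_k, D_k$ with $k\le 19$ and nothing beyond $E_8$. That is a real argument and it yields exactly the sharp bounds in the first five items.

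Everything else is a program whose execution is the actual content of \cite{SexticClass}. The constraints you propose for the non-simple germs --- proximity inequalities, $m_0\le 6$, and $\sum_{p'}\delta_{p'}\le 10+(r-1)$ --- are correct but, as stated, nowhere near strong enough to reproduce the specific parameter lists in the theorem: the bound $l\le 12$ for $B_{3,l}$, the function $n(l)=15,14,14,12,11,11,9$ for $C_{k,l}$, and the exact finite sets for $xB_{k,l}$, $yB_{k,l}$, $xyB_{k,l}$, $yC_{k,l}$, $D_{k,l}$, $F_{k,l}$ and the $S_*$ series. The $\delta$-bound involves the unknown number of components $r$ and must be coupled to the number of local branches at $p$ and to B\'ezout-type constraints between those branches and lines or conics through $p$; none of that coupling is carried out, and you give no evidence that your inequalities are sharp in even one borderline case (why $B_{3,12}$ but not $B_{3,13}$? why $C_{3,15}$ but not $C_{3,16}$?). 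You also drop the $m=6$ case after observing it forces six concurrent lines, yet that germ is on the list (topologically it is $B_{6,6}$), and the passage from an admissible cluster to the stated normal forms with moduli ($a_0\neq 0$, $a_1$ in the $S_*$ series) is asserted rather than argued. In short, the realizability half and the $ADE$ half are sound, but the completeness half for non-simple germs is a plausible plan with all of the work --- which is the theorem --- still to do; the honest course is either to cite \cite{SexticClass} as the paper does or to commit to the full case-by-case enumeration.
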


All these singularities are also in Arnol'd's list, so one might also use the names given by Arnol'd. A translation between our name-giving and that of Arnol'd can be found in \cite[Remark 1]{SexticClass}. 

Several of these singularities have distinct Milnor and Tjurina number, and are therefore not semi-weighted homogeneous.

We do not present a classification of non-reduced sextics here. Essentially, one has either 
\begin{itemize}
\item a double line with a quartic,
\item a double conic with another conic,
\item a double cubic,
\item a triple line with a cubic,
\item a triple line with a double line and a line,
\item a triple conic or
\item a quadruple line with conic.
\end{itemize}

The possibilities for the singularities are a combination of the possibilities of singularities for conics, cubics and quartic, and the possible intersection numbers between the components.

\section{Calculating $H^4_p(Y,\C)$}\label{secCalc}

In this section we discuss three approaches to calculate $H^4_p(Y,\Q)$.  For each singularity that we encounter, one of these methods applies, except for six types of singularities. We list $h^4_p(Y)$ for each of the singularities.

\subsection{Dimca's method}
Let $(Y,p)$ be a semi-weighted isolated hypersurface singularity. We have a local equation of the form $f_p+g_p=0$, such that $(Y,p)$ is a $\mu$-constant deformation of $f_p=0$ and $f_p=0$ defines a weighted homogeneous isolated hypersurface singularity. 

Let  $w_1,\dots,w_4$ are the weights of the variables, $w_p=w_1+w_2+w_3+w_4$ and let $d_p$ be the (weighted) degree of $f$. Then Dimca \cite{Dim} shows
\[ H^4_p(Y)=\oplus_{i=1}^3 R(f)_{i d_p-w_p}.\]
Moreover, this direct sum decomposition is just $\oplus \Gr^F_{4-i} H^4_p(Y)$. Finally, Dimca shows that $H^4_p(Y)$ has a pure weight 4 Hodge structure.

It turns out that all singularities under consideration statisfy $R(f)_{d_p-w_p}=0$ and $R(f)_{3d_p-w_p}=0$. This follows from the fact that in all cases $d_p< w_p$ and the existence of a non-degenerated pairing $R(f)_{d_p-w_p}\times R(f)_{3d_p-w_p} \to R(f)_{4d_p-2w_p}\cong \C$.
This implies that $H^4_p(Y,\C)=\C(-2)^k$ with $k=h^4_p(Y)$.

If $j(\pi)=1728$ then all singularities of $Y$ are non-isolated, so for the rest of this subsection assume that $j(\pi)=0$.

We have a semi-weighted hypersurface singularity if and only if the sextic $C$ is reduced at $q=\psi(p)$ and has a weighted homogeneous singularity\footnote{From the discussion of singular sextics it follows that only the $S_k$ singularities have moduli. Since the $S_k$ singularities  are not semi weighted homogeneous it turns out that all semi weighted homogeneous singularities are rigid and therefore weighted homogeneous.}.
This limits us to cases that $C$ has either an $ADE$ singularity, or a
 $B_{k,l},xB_{k,l},yB_{k,l},xyB_{k,l}$ singularity. We list now the  singularities with non-trivial $H^4_p(Y)$.
 
\begin{proposition}
Suppose $(C,q)$  is a weighted homogeneous singularity of a sextic curve, not a point of order six, and such that $h^4_p(Y)\neq 0$. Then $(C,q)$ is one of
\begin{itemize}
\item $A_2,A_5,A_8,A_{11},A_{14},A_{17}$,
\item $E_6$,
\item $B_{3,6},B_{3,8},B_{3,10},B_{3,12},B_{4,6}$.
\end{itemize}
\end{proposition}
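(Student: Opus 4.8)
The plan is to apply Dimca's method (stated just above) directly and reduce the statement to a finite list of explicit computations of graded pieces of a Milnor/Jacobian algebra. Recall that a point $q$ on the sextic $C$ with a weighted homogeneous singularity lifts to a point $p$ on $Y\subset \Ps(2,3,1,1,1)$ whose local equation is $y^2 = x^3 + h(z_1,z_2)$ where $h=0$ is a local equation for $(C,q)$ (after trivializing $x$ on the chart, say). So the relevant weighted homogeneous polynomial is $f_p = y^2 - x^3 - h(z_1,z_2)$ in four variables $y,x,z_1,z_2$. If $(C,q)$ is the singularity with weights $(a_1,a_2)$ for $(z_1,z_2)$ and degree $d$ — normalized so that $h$ is homogeneous of degree $d$ — then, choosing the common normalization, $f_p$ has $x$ of weight $d/3$, $y$ of weight $d/2$, $z_i$ of weight $a_i$, and total degree $d$. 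Then $w_p = d/2 + d/3 + a_1 + a_2$ and $d_p = d$, so $d_p - w_p = d/6 - a_1 - a_2$. The quotient ring $R(f_p)$ factors as $R(x^3-y^2) \otimes R(h)$ up to the appropriate grading shift, and $R(x^3-y^2)$ contributes the classes $1, x$ in degrees $0$ and $d/3$. Hence
\[ H^4_p(Y) = \bigoplus_{i=1}^{3} R(f_p)_{id_p - w_p} = \bigoplus_{i=1}^{3}\Bigl( R(h)_{id - w_p} \oplus x\cdot R(h)_{id - d/3 - w_p}\Bigr),\]
and since (as noted in the excerpt) the $i=1$ and $i=3$ pieces vanish, only $i=2$ survives: $h^4_p(Y) = \dim R(h)_{2d - w_p} + \dim R(h)_{2d - d/3 - w_p}$.

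The second step is to make this completely explicit for each family $A_k$, $D_k$, $E_6$, $E_7$, $E_8$, $B_{k,l}$, $xB_{k,l}$, $yB_{k,l}$, $xyB_{k,l}$ occurring on a sextic (the complete list is in Theorem~\ref{ThmSextic}, and the semi-weighted-homogeneous ones are exactly these by the footnote in the excerpt). For each one I would write down the weights $(a_1,a_2;d)$ with $d$ chosen so the six is cut out by a degree-$d$ form in the local weighted-homogeneous sense (e.g. for $A_k: z_1^2 + z_2^{k+1}$ take weights $\tfrac{1}{2}, \tfrac{1}{k+1}$, $d=1$; I'd clear denominators to integers), compute $w_p$ and the two target degrees $2d-w_p$ and $2d - d/3 - w_p$, and read off the Hilbert function of the (graded) Milnor algebra $R(h)$, which for all these singularities is a known finite list. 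The outcome is a finite table; the proposition is just the assertion of which entries are nonzero. I would also need to double-check the normalization against the condition "$q$ not a point of order six": a point of order six is where $C$ has multiplicity $6$, i.e. $C$ is (locally) six lines, which is excluded because such $Y$ is a cone — this is why it is listed as an exception rather than computed.

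The main obstacle, and the only genuinely delicate point, is getting the weight normalization exactly right: the formula $H^4_p(Y) = \oplus R(f)_{id_p - w_p}$ is sensitive to how $d_p$ and the $w_i$ are scaled, and $Y$ lives in the weighted projective space $\Ps(2,3,1,1,1)$, so one must be careful that the local weighted-homogeneous model one writes down is compatible with the ambient grading (in particular, $x$ and $y$ already carry weights $2$ and $3$ globally, and the chart trivialization must be handled consistently). Once the bookkeeping is fixed, verifying that $d_p < w_p$ (needed for the pairing argument that kills $i=1,3$) and computing the relevant graded dimension of $R(h)$ is routine. A secondary point is to confirm that $R(h)_{2d-w_p}$ and $R(h)_{2d-d/3-w_p}$ are the \emph{only} contributions — i.e. that no $ADE$ or $B$-type sextic singularity produces extra classes from higher-degree monomials in $x$ — but this follows immediately from $R(x^3-y^2)$ being spanned by $1$ and $x$. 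I expect the proof to be: set up the local model, record the weight data for each singularity type in a table, apply Dimca's theorem together with the vanishing of the $i=1,3$ graded pieces, and observe that the only nonzero outputs are precisely those in the list.
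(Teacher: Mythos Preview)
Your approach is correct and is essentially the same as the paper's. The paper proves the proposition through a sequence of lemmas, one per family ($A_k$, $D_k$, $E_k$, $B_{k,l}$, $xB_{k,l}$, $yB_{k,l}$, $xyB_{k,l}$): in each case it writes the local equation $y^2=x^3+h(s,t)$, assigns integer weights, and computes $R(f_p)_{2d_p-w_p}$ by listing the surviving monomials modulo the Jacobian ideal. Your Thom--Sebastiani factorization $R(f_p)\cong \C[x]/(x^2)\otimes R(h)$ just makes explicit what the paper uses implicitly (namely that $y$ and $x^2$ lie in the Jacobian ideal), reducing the computation to the Hilbert function of $R(h)$ at two specific degrees; this is a mild streamlining rather than a different method. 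The only places where the paper's argument is not a purely mechanical table are a ``three-weights'' reduction trick for $B_{k,l}$ with $6\nmid kl$ (passing to a model where $1$ lies in the Jacobian ideal) and a computer-algebra verification for the $xB$, $yB$, $xyB$ families; your tabular approach would reach the same conclusions.
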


The following lemmata yield a proof of this proposition and the proofs provide basis for $H^4_p(Y,\C)$ for each non-trivial case.

\begin{lemma}\label{lemj0Ak}
Suppose $C$ has an $A_k$ singularity at $q$. If $k \equiv 2\bmod 3$ then $H^4_p(Y,\C$ is isomorphic to $\C(-2)^2$. Otherwise,  $H^4_p(Y,\C)$ vanishes.
\end{lemma}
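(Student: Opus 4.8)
The strategy is to apply Dimca's method directly, since an $A_k$ singularity is weighted homogeneous. First I would fix a local analytic chart. Working in the affine chart $z_0=1$ and setting $x$ to be the usual affine coordinate on the fiber, the threefold $Y$ is locally $y^2=x^3+R(z_1,z_2)$, and we may assume $R$ has an $A_k$ singularity at the origin of the $(z_1,z_2)$-plane, i.e.\ after an analytic change of coordinates $R=u^2+v^{k+1}$ (here renaming $z_1,z_2$ to $u,v$). So the local equation of $(Y,p)$ is
\[ f_p = y^2 - x^3 - u^2 - v^{k+1} = 0 \]
in the four variables $x,y,u,v$. This is quasi-homogeneous: assign weights $w_y = \tfrac12$, $w_x = \tfrac13$, $w_u = \tfrac12$, $w_v = \tfrac1{k+1}$ so that $d_p = 1$, and $w_p = \tfrac12+\tfrac13+\tfrac12+\tfrac1{k+1} = \tfrac43 + \tfrac1{k+1}$. (One may clear denominators; the ratios are what matter.)

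Next I would compute the Milnor/Jacobian ring $R(f_p) = \C[x,y,u,v]/(f_{p,x},f_{p,y},f_{p,u},f_{p,v}) = \C[x,y,u,v]/(x^2, y, u, v^k)$, which is $\C[x,v]/(x^2,v^k)$ with monomial basis $\{x^a v^b : 0\le a\le 1,\ 0\le b\le k-1\}$. The weighted degree of $x^a v^b$ is $\tfrac a3 + \tfrac b{k+1}$. By Dimca, $H^4_p(Y) = \bigoplus_{i=1}^{3} R(f_p)_{i d_p - w_p}$, and as noted in the text the $i=1$ and $i=3$ pieces vanish (since $d_p < w_p$, which here reads $1 < \tfrac43+\tfrac1{k+1}$, true for all $k\ge 1$, together with the perfect pairing argument). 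So the whole computation reduces to determining $\dim R(f_p)_{d_p - w_p + ?}$ at the middle degree, i.e.\ the coefficient of the monomials of weighted degree $2 d_p - w_p = 2 - \tfrac43 - \tfrac1{k+1} = \tfrac23 - \tfrac1{k+1}$. I then need to count pairs $(a,b)$ with $0\le a\le 1$, $0\le b\le k-1$ and $\tfrac a3 + \tfrac b{k+1} = \tfrac23 - \tfrac1{k+1}$, equivalently $\tfrac a3 + \tfrac{b+1}{k+1} = \tfrac23$.

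The arithmetic is then elementary: writing $a(k+1) + 3(b+1) = 2(k+1)$, i.e.\ $3(b+1) = (2-a)(k+1)$, we need $3 \mid (2-a)(k+1)$. If $3\nmid (k+1)$ then $3\mid (2-a)$, forcing $a=2$, which is out of range — contradiction, unless... so we instead need $3\mid(k+1)$, i.e.\ $k\equiv 2 \bmod 3$. When $k\equiv 2\bmod 3$, say $k+1=3m$, the equation becomes $3(b+1) = (2-a)\cdot 3m$, i.e.\ $b+1 = (2-a)m$; with $a=0$ we get $b = 2m-1$, and with $a=1$ we get $b = m-1$; both satisfy $0\le b\le k-1 = 3m-2$. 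That gives exactly two basis monomials, hence $h^4_p(Y)=2$, and since $H^4_p(Y,\C)=\C(-2)^{h^4_p(Y)}$ by the Tate-twist remark preceding the lemma, $H^4_p(Y,\C)\cong\C(-2)^2$. If $k\not\equiv 2\bmod 3$ there are no solutions and $H^4_p(Y,\C)=0$. I expect the only mild subtlety — the ``main obstacle'' such as it is — to be bookkeeping the weighted degrees consistently (clearing denominators correctly) and making sure the analytic normal form $R = u^2 + v^{k+1}$ is legitimate, i.e.\ that an $A_k$ plane-curve singularity really is analytically equivalent to this and that the ambient equation $y^2 = x^3 + R$ genuinely gives the quasi-homogeneous $f_p$ above with an \emph{isolated} singularity (so that Dimca's theorem applies); this is where I'd be most careful, but it is standard.
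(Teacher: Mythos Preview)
Your proposal is correct and follows essentially the same route as the paper: both write the local equation as $y^2=x^3+u^2+v^{k+1}$, compute the Jacobian ring $\C[x,v]/(x^2,v^k)$, and reduce to solving the weighted-degree equation for monomials $x^a v^b$ in degree $2d_p-w_p$, which forces $3\mid(k+1)$ and then yields exactly the two monomials $v^{(2k-1)/3}$ and $xv^{(k-2)/3}$. The only cosmetic difference is that the paper clears denominators in the weights (using $6,3k+3,2k+2,3k+3$ instead of your $\tfrac1{k+1},\tfrac12,\tfrac13,\tfrac12$), whereas you work with rational weights directly; the arithmetic is identical.
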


\begin{proof} We have a local equation for $Y$ of the form
\[ y^2=x^3+t^2+s^{k+1}.\]
Setting weights $6,3k+3,2k+2,3k+3$ for $s,t,x,y$, we obtain $d_p=6k+6$, $w_p=8k+14$. Hence $2d_p-w_p=4k-2$. The Jacobian ideal is generated by $y,t,x^2, s^k$. Hence $R(f)_{4k-2}$ is spanned by
\[ xs^{(k-2)/3}, s^{(2k-1)/3} \]
This means that $H^4_p(Y)=0$ if $k\not \equiv 2 \bmod 3$ and $H^4_p(Y)=\C(-2)$ if $k\equiv 2 \bmod 3$.
\end{proof}

\begin{lemma}
Suppose $C$ has an $D_k$ singularity at $q$ then $H^4_p(Y,\C)=0$.
\end{lemma}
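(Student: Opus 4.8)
The plan is to apply Dimca's method, following the proof of Lemma~\ref{lemj0Ak} almost verbatim. Set $q=\psi(p)$. Since $(C,q)$ is a $D_k$ singularity ($4\le k\le 19$), after an analytic change of coordinates its germ is $t^2s+s^k$, which is weighted homogeneous; hence, as in the discussion preceding the lemma, $Y$ has near $p$ a weighted homogeneous isolated hypersurface singularity with local equation
\[ y^2=x^3+t^2s+s^k, \]
the singularity being isolated by a glance at the partial derivatives. I would then give $s,t,x,y$ the weights $6,\,3(k-1),\,2k,\,3k$, so that $f=y^2-x^3-t^2s-s^k$ is homogeneous of degree $d_p=6k$ and $w_p=8k+3$.

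Next I would extract the cohomology from the grading. By Dimca's theorem $H^4_p(Y)=\bigoplus_{i=1}^3 R(f)_{id_p-w_p}$. As in the general remarks in Section~\ref{secCalc}, the term $i=1$ vanishes since $d_p-w_p=-2k-3<0$, and the term $i=3$ vanishes by the non-degenerate pairing $R(f)_{d_p-w_p}\times R(f)_{3d_p-w_p}\to R(f)_{4d_p-2w_p}\cong\C$. Thus $H^4_p(Y)=R(f)_{2d_p-w_p}=R(f)_{4k-3}$.

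Finally I would compute the Milnor algebra. The Jacobian ideal of $f$ is $(y,x^2,ts,t^2+ks^{k-1})$, so $R(f)\cong \C[s,t,x]/(x^2,st,t^2+ks^{k-1})$, which as a graded vector space is spanned by the monomials $x^{\varepsilon}s^i$ (with $\varepsilon\in\{0,1\}$ and $0\le i\le k-1$) together with $x^{\varepsilon}t$ (with $\varepsilon\in\{0,1\}$). Their weights are $6i$, $2k+6i$, $3k-3$ and $5k-3$ respectively. Since $4k-3$ is odd it is not of the form $6i$ or $2k+6i$, while $3k-3=4k-3$ or $5k-3=4k-3$ would force $k=0$; so no basis element lies in degree $4k-3$. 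Hence $R(f)_{4k-3}=0$ and $H^4_p(Y,\C)=0$.

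I do not anticipate a real obstacle: this is the same bookkeeping as in the $A_k$ case, and the only point demanding care is the weight count — specifically the observation that $2d_p-w_p$ has the wrong parity to be realized by any monomial of the Milnor algebra. The one modeling subtlety, that a $D_k$ singularity of $C$ produces an \emph{isolated} (hence semi-weighted homogeneous) singularity of $Y$ so that Dimca's method applies, is settled by the computation of partial derivatives mentioned above.
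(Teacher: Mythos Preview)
Your approach is essentially the paper's: apply Dimca's method to the weighted homogeneous local equation, compute the Jacobian ring, and use a parity count to see that no basis monomial of the Milnor algebra sits in degree $2d_p-w_p$. The one slip is the normal form: a $D_k$ singularity has local equation $t^2s+s^{k-1}$ (Milnor number $k$), not $t^2s+s^k$ (which is $D_{k+1}$). With the correct exponent the weights of $s,t,x,y$ become $6,\,3k-6,\,2k-2,\,3k-3$, giving $d_p=6k-6$, $w_p=8k-5$ and $2d_p-w_p=4k-7$, exactly as in the paper. Your endgame then goes through unchanged: the basis monomials $s^i$ and $xs^i$ have even weight while $4k-7$ is odd, and the weights $3k-6$ and $5k-8$ of $t$ and $xt$ equal $4k-7$ only for $k=1$. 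So the argument is fine once the exponent is corrected; since the vanishing holds for every $k$, the index shift did not affect your conclusion.
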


\begin{proof} We have a local equation for $Y$ of the form
\[ y^2=x^3+st^2+s^{k-1}.\]
Setting weights $6,3k-6,2k-2,3k-3$, we obtain $d_p=6k-6$, $w_p=8k-5$. Hence $2d_p-w_p=4k-7$. The Jacobian ideal is generated by $y,st,x^2, (k-1)s^{k-2}+t^2$. Since $t$ and $x$ have even weight, it follows that each generator of $R(f)_{4k-7}$ is divisible by $t$. Since $st$ is in the Jacobian ideal, the only possibility is $x^\alpha t^\beta$. Since $(2k-2)\alpha + (3k-6) \beta = 4k-7$ 
has no integral solution with $k>3, \beta>0,\alpha\geq0$ we have $R(f)_{4k-7}=0$.
\end{proof}

\begin{lemma}
Suppose $C$ has an $E_k$ singularity at $q$, $k\in \{6,7,8\}$ then $H^4_p(Y,\C)=\C(-2)^2$ if $k=6$ and $H^4_p(Y,\C)=0$ otherwise.
\end{lemma}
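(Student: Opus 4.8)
The plan is to handle $k=6,7,8$ uniformly by Dimca's method, exactly as in the preceding lemmata. Since an $E_k$ point on the sextic $C$ is reduced and quasi-homogeneous, at the point $p\in Y$ over $q$ the threefold $Y$ has, up to a unit and an analytic change of coordinates, the weighted homogeneous local equation
\[ y^2=x^3+s^3+t^4,\quad y^2=x^3+s^3+st^3,\quad y^2=x^3+s^3+t^5 \]
for $k=6,7,8$ respectively, where $s,t$ are local coordinates on $\Ps^2$ centred at $q$. Each of these is an isolated singularity, being a Thom--Sebastiani sum of the isolated singularities $y^2$, $x^3$ and $g(s,t)$ in disjoint variables, so the formula $H^4_p(Y)=\oplus_{i=1}^3 R(f)_{id_p-w_p}$ applies. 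It then suffices to compute the three relevant graded pieces of the Milnor algebra.

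For $k=6$ I would give $x,y,s,t$ the weights $4,6,4,3$, so $d_p=12$ and $w_p=17$. The Jacobian ideal is $(x^2,y,s^2,t^3)$, hence $R(f)=\C[x,s,t]/(x^2,s^2,t^3)$, a twelve-dimensional algebra with monomial basis and top degree $14$ (realised by $xst^2$). Since $d_p-w_p=-5<0$ and $3d_p-w_p=19>14$, only the middle summand survives, and one checks directly that $R(f)_{2d_p-w_p}=R(f)_7$ is spanned by $xt$ and $st$; thus $h^4_p(Y)=2$.

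For $k=7$ the natural integral weights of $x,y,s,t$ are $6,9,6,4$, giving $d_p=18$, $w_p=25$; for $k=8$ they are $10,15,10,6$, giving $d_p=30$, $w_p=41$. In both cases the generator $y$ of the Jacobian ideal kills the variable $y$, and the surviving variables $x,s,t$ all carry even weight, whereas $2d_p-w_p$ equals $11$, resp. $19$ — an odd number — so $R(f)_{2d_p-w_p}=0$. Since moreover $d_p-w_p<0$ while $3d_p-w_p$ is again odd (and exceeds the top degree of $R(f)$), the two outer summands vanish as well, so $h^4_p(Y)=0$.

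Finally, in the $k=6$ case one upgrades $h^4_p(Y)=2$ to $H^4_p(Y,\C)\cong\C(-2)^2$ as in the rest of the section: here $R(f)_{d_p-w_p}=R(f)_{-5}=0$ and $R(f)_{3d_p-w_p}=R(f)_{19}=0$, and with both side pieces zero, Dimca's theorem (pure weight $4$) together with the perfect pairing $R(f)_{d_p-w_p}\times R(f)_{3d_p-w_p}\to R(f)_{4d_p-2w_p}\cong\C$ forces the Hodge structure to be $\C(-2)^2$. I do not expect any genuine obstacle; the only points needing care are choosing integral weights (the variable $y$ of weight $d_p/2$ forces an even overall scaling) and the parity and top-degree bookkeeping that annihilates the two outer summands.
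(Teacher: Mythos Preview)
Your proof is correct and follows essentially the same route as the paper: apply Dimca's method to the weighted homogeneous local equation, observe $d_p<w_p$ so the outer summands vanish, and compute $R(f)_{2d_p-w_p}$ directly. The only minor differences are cosmetic (your $s,t$ are the paper's $t,s$) and that for $E_7,E_8$ you invoke the parity observation---all surviving variables have even weight while $2d_p-w_p$ is odd---whereas the paper simply asserts there are no monomials of the required degree; these amount to the same check.
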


\begin{proof}$E_6:$ We have a local equation for $Y$ of the form
\[ y^2=x^3+t^3+s^4.\]
Setting weights $3,4,4,6$, we obtain $d_p=12, w_p=17, 2d_p-w_p=7$. The only monomials of weights 7 are $xs,ts$ and their classes provide a basis for $R(f_p)_{7}$.

$E_7:$ We have a local equation for $Y$ of the form
\[ y^2=x^3+t^3+s^3t.\]
Setting weights $4,6,6,9$, we obtain $2d_p-w_p=11$. Since there are no monomials of degree 11, we obtain $R(f_p)_{11}=0$.

$E_8:$ We have a local equation for $Y$ of the form
\[ y^2=x^3+t^3+s^5.\]
Setting weights $6,10,10,15$, we obtain $2d_p-w_p=19$. Since there are no monomials of degree 19, we obtain $R(f_p)_{10}=0$.
\end{proof}

\begin{remark}\label{rem3wts}
Suppose $(Y,p)$ is a weighted homogeneous hypersurface singularity. 
Let $(\{f_p=0\},0)$ be a local equation of $(Y,p)$, where $f_p$ is weighted homogeneous. Then $S_p=\{f_p=0\}$ defines a surface in $\Ps(w_0,w_1,w_2,w_3)$. Dimca's method (as well as the method of Hulek-Kloosterman) relies on the isomorphism $H^4_p(Y,\C)\cong H^2(S_p,\C)_{\prim}(1)$.

Often one can simplify this calculation. If exactly three of the four weights have a non-trivial common divisor one can apply the following procedure:
Suppose $S_p\subset \Ps(w_0,gw_1,gw_2,gw_3)$ and $g\nmid w_0$. Then there is an isomorphism  $\varphi: \Ps(w_0,gw_1,gw_2,gw_3)\to \Ps(w_0,w_1,w_2,w_3)$ by sending $(x_0:x_1:x_2:x_3) \to (x_0^g:x_1:x_2:x_3)$. 

Let $g_p$ be the equation of $\varphi(S_p)$. Suppose that $g_p$ has an isolated singularity, i.e, $\varphi(S_p)$ is quasi-smooth. Since $\varphi$ is an isomorphism we have then that $H^{1,1}(S_p)_{\prim}\cong R(g_p)_{d-w_0-w_1-w_2-w_3}$. 
We often refer to this as the \emph{three weights trick}.

The reason to apply the three weights trick is the following: in several cases it turns out that $1$ lies in the Jacobian ideal of $g_p$. This in turn implies that $R(g_p)=0$, and $h^{1,1}(S_p)_{\prim}=0$.
\end{remark}

\begin{lemma}
Suppose $C$ has a $B_{k,l}$ singularity and $6\nmid kl$ then $H^4_p(Y,\C)=0$.
\end{lemma}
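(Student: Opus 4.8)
The plan is to run Dimca's method, exactly as in the preceding lemmata (e.g.\ Lemma~\ref{lemj0Ak} and the computation for $E_6$). A $B_{k,l}$ singularity of the sextic is analytically rigid of Brieskorn--Pham type, so the first step is to pin down the local equation of $(Y,p)$. The hypothesis says that, near $q=\psi(p)$, the polynomial $R$ is analytically equal to $s^k+t^l$ up to a unit that does not vanish at $p$; absorbing that unit into $x$ and $y$ via a local analytic cube, resp.\ square, root identifies $(Y,p)$ with the weighted homogeneous isolated hypersurface singularity $y^2=x^3+s^k+t^l$. Thus Dimca's theorem applies, with $f_p=y^2-x^3-s^k-t^l$.

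Next I would fix weights. With weighted degree $d_p=6kl$, give $s,t,x,y$ the weights $6l,\,6k,\,2kl,\,3kl$; then $w_p=5kl+6k+6l$ and $2d_p-w_p=7kl-6k-6l$. As recorded in the general discussion of Dimca's method (and easily checked here, since $d_p<w_p$ is equivalent to $(k-6)(l-6)<36$, which holds throughout the admissible range of Theorem~\ref{ThmSextic}), we have $d_p<w_p$, so the nondegenerate pairing $R(f_p)_{d_p-w_p}\times R(f_p)_{3d_p-w_p}\to R(f_p)_{4d_p-2w_p}\cong\C$ forces $R(f_p)_{d_p-w_p}$ and $R(f_p)_{3d_p-w_p}$ to vanish. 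Hence it suffices to prove $R(f_p)_{2d_p-w_p}=0$.

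Finally I would write out the Milnor algebra: the Jacobian ideal of $f_p$ is $(y,x^2,s^{k-1},t^{l-1})$, so $R(f_p)$ has monomial basis $\{x^a s^b t^c:\ a\in\{0,1\},\ 0\le b\le k-2,\ 0\le c\le l-2\}$, and $x^a s^b t^c$ has weight $2akl+6bl+6ck$. Setting this equal to $2d_p-w_p$ and substituting $B=b+1$, $C=c+1$ rewrites the condition as $6(Bl+Ck)=(7-2a)kl$; the right-hand side is $7kl$ when $a=0$ and $5kl$ when $a=1$, and since $\gcd(7,6)=\gcd(5,6)=1$, for the left-hand side to be divisible by $6$ one needs $6\mid kl$. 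Therefore, if $6\nmid kl$, no monomial of the Milnor basis lies in degree $2d_p-w_p$, so $R(f_p)_{2d_p-w_p}=0$ and $H^4_p(Y,\C)=0$.

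I do not expect a genuine obstacle: this is the same routine Brieskorn--Pham bookkeeping as in the earlier lemmata of this section. The only two points needing a little care are the reduction to the normal form $y^2=x^3+s^k+t^l$ (one must check that absorbing the unit leaves $x$ and $y$ local coordinates near $p$) and the uniform verification $d_p<w_p$, which is what allows one to discard the two outer graded summands and reduce the entire statement to the single divisibility obstruction $6\mid kl$.
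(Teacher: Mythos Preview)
Your argument is correct. You run Dimca's method directly: with $f_p=y^2-x^3-s^k-t^l$ and weights $(6l,6k,2kl,3kl)$ you get $d_p=6kl$, $w_p=5kl+6k+6l$, the monomial basis $x^as^bt^c$ (with $a\le 1$, $b\le k-2$, $c\le l-2$) for the Milnor algebra, and the weighted-degree equation $6((b+1)l+(c+1)k)=(7-2a)kl$, which forces $6\mid kl$. The verification $d_p<w_p\iff (k-6)(l-6)<36$ is fine on the admissible range of Theorem~\ref{ThmSextic}, so the outer graded pieces vanish and only $R(f_p)_{2d_p-w_p}$ matters.

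The paper takes a different, more geometric route: it invokes the \emph{three weights trick} (Remark~\ref{rem3wts}). With the same weights $(6k,6l,2kl,3kl)$, the hypothesis $6\nmid kl$ splits into two cases. If $2\nmid kl$ then the three weights $6k,6l,2kl$ are even while $3kl$ is odd, so the isomorphism $\Ps(w_0,2w_1,2w_2,2w_3)\cong\Ps(w_0,w_1,w_2,w_3)$ via $y\mapsto y^2$ replaces $f_p$ by $y-x^3-s^k-t^l$; if $3\nmid kl$ then $6k,6l,3kl$ are divisible by $3$ while $2kl$ is not, and one replaces $f_p$ by $y^2-x-s^k-t^l$. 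In either case $1$ lies in the Jacobian ideal, so the entire Jacobian ring vanishes and $H^4_p(Y)=0$. Your direct monomial count is more self-contained and gives in one stroke the precise divisibility obstruction $6\mid kl$ (which also explains why the next lemma treats exactly the cases $6\mid kl$); the paper's approach, on the other hand, avoids any bookkeeping and showcases a reusable trick that it applies repeatedly elsewhere in Section~\ref{secCalc}.
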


\begin{proof}
We have a local equation for $(C,q)$ of the form
\[y^2=x^3+t^k+s^l \]
hence we can set the weights for  $s,t,x,y$ to be $6k,6l,2kl,3kl$. If $2\nmid kl$ we may apply the three weight trick (Remark~\ref{rem3wts}). Therefore it suffices to study
$y=x^3+t^k+s^l$. If $3\nmid kl$ we can also use the three weights trick, in this case we obtain the singularity $y^2=x+t^k+s^l$. In both cases 1 is in the Jacobian ideal, hence the Jacobian ring (and therefore the local cohomology) vanishes.
\end{proof}

Recall that in Theorem~\ref{ThmSextic} we give a list of possible values $(k,l)$ such that $B_{k,l}$ occurs as a singularity on a sextic.
\begin{lemma}
Suppose $C$ has a $B_{k,l}$ singularity and $6\mid kl$ then 
\begin{itemize}
\item $H^4_p(Y,\C)=\C(-2)^2$ if $k=3$ and $l\equiv 2,4\bmod 6$.
\item $H^4_p(Y,\C)=\C(-2)^4$ if $k=3$ and $l\equiv 0\bmod 6$.
\item $H^4_p(Y,\C)=\C(-2)^2$ if $(k,l)=(4,6)$.
\item $H^4_p(Y,\C)=0$ if $(k,l)=(5,6)$.
\end{itemize}
\end{lemma}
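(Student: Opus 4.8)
The plan is to compute $H^4_p(Y,\C)$ in each case by Dimca's method, exactly as in the preceding lemmata, using the local equation $y^2 = x^3 + t^k + s^l$ with the weighted-homogeneous structure on the variables $s,t,x,y$. Since $6 \mid kl$ and, by Theorem~\ref{ThmSextic}, the only admissible pairs with $k \geq 3$ are $(3,6),(3,8),(3,10),(3,12),(4,6),(5,6)$, I would handle the four bullets separately but with a uniform strategy: assign weights so that the equation is homogeneous, compute $d_p$ and $w_p = \sum w_i$, determine the Jacobian ideal (generated by $y$, $x^2$, $t^{k-1}$, $s^{l-1}$), and then enumerate the monomials of degrees $d_p - w_p$, $2d_p - w_p$ and $3d_p - w_p$ that survive in $R(f)$. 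As in Lemma~\ref{lemj0Ak} and the $E_k$ lemma, the pairing $R(f)_{d_p-w_p} \times R(f)_{3d_p-w_p} \to R(f)_{4d_p-2w_p} \cong \C$ together with $d_p < w_p$ forces the extreme graded pieces to vanish, so $h^4_p(Y)$ equals $\dim R(f)_{2d_p-w_p}$ and $H^4_p(Y,\C) \cong \C(-2)^{h^4_p(Y)}$.

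First, for $k=3$: one natural choice is weights $2l, 6, l, 3l/\gcd(\ldots)$ — more concretely, to make $x^3$, $t^3$, $s^l$ all of degree $6l$, take $w_s = 6$, $w_t = 2l$, $w_x = 2l$, $w_y = 3l$, so $d_p = 6l$ and $w_p = 6 + 2l + 2l + 3l = 7l + 6$. Then $2d_p - w_p = 12l - 7l - 6 = 5l - 6$. The Jacobian ideal is generated by $y$ (degree $3l$), $x^2$ (degree $4l$), $t^2$ (degree $4l$), $s^{l-1}$ (degree $6l-6$). A monomial $x^a t^b s^c$ of degree $5l - 6$ surviving must have $a \leq 1$, $b \leq 1$, $c \leq l-2$; I would solve $2la + 2lb + 6c = 5l - 6$ for $(a,b,c) \in \{0,1\}\times\{0,1\}\times\{0,\dots,l-2\}$. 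The parity/divisibility analysis in $l$ mod $6$ is what separates the $\C(-2)^2$ cases ($l \equiv 2,4$) from the $\C(-2)^4$ case ($l \equiv 0$); one expects roughly that $x$ and $t$ enter symmetrically, doubling the count, and when $6 \mid l$ extra solutions with $a=b=1$ appear. For $(k,l)=(4,6)$: weights $6,4,3,\ldots$ — take $w_s=6$, $w_t=9$, $w_x=12$, $w_y=18$ so all of $x^3, t^4, s^6$ have degree $36$, giving $d_p = 36$, $w_p = 6+9+12+18 = 45$, $2d_p - w_p = 27$; enumerate $x^a t^b s^c$ of degree $27$ with $a\leq 1$ (from $x^2$), $b \leq 2$ (from $t^3$), $c \leq 4$ (from $s^5$), and I expect exactly two surviving monomials. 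For $(k,l)=(5,6)$: take $w_s = 5$, $w_t = 6$, $w_x = 10$, $w_y = 15$, degree $30$, $w_p = 5+6+10+15 = 36$, $2d_p - w_p = 24$; here I expect the constraints to leave no admissible monomial, giving $H^4_p(Y,\C) = 0$, possibly after invoking the three weights trick of Remark~\ref{rem3wts} to reduce the computation.

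The main obstacle I anticipate is purely bookkeeping: correctly identifying which monomials are linearly independent in the Jacobian ring (the generator $t^{k-1}$ or $t^2$ must be accounted for, not just $t^k$), and getting the modular arithmetic in $l$ right so that the case split $l \equiv 0$ versus $l \equiv 2,4 \pmod 6$ comes out with the stated multiplicities $4$ versus $2$. A secondary subtlety is making sure the weight assignment actually gives an isolated (quasi-smooth) singularity so that Dimca's theorem applies and $R(f)$ is finite-dimensional; since $y^2 = x^3 + t^k + s^l$ is a Brieskorn–Pham type singularity this is automatic, but I would remark on it. Once the dimension of $R(f)_{2d_p - w_p}$ is pinned down in each of the six admissible $(k,l)$, the purity and Tate-twist statement $H^4_p(Y,\C) = \C(-2)^{h^4_p(Y)}$ is immediate from the general discussion of Dimca's method at the start of Section~\ref{secCalc}, completing the proof.
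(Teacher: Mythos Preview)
Your proposal is correct and follows essentially the same approach as the paper: compute $R(f)_{2d_p-w_p}$ directly for the Brieskorn--Pham singularity $y^2=x^3+t^k+s^l$ using the weights you wrote down, with the Jacobian ideal generated by $y,x^2,t^{k-1},s^{l-1}$. The paper's proof is terser (it simply lists the surviving monomials, e.g.\ $xts^{(l-6)/6},\,xs^{(l-2)/2},\,ts^{(l-2)/2},\,s^{(5l-6)/6}$ for $k=3$, and $xts,\,ts^3$ for $(k,l)=(4,6)$), but your more explicit enumeration and your observation about the $x\leftrightarrow t$ symmetry when $k=3$ reproduce exactly this.
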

\begin{proof}
An easy computation shows that if $(k,l)=(5,6)$ then $R_{2d_p-w_p}=0$ and there is no local cohomology. 

If $k=3$ then $R_{2d_p-w_p}$ is generated by \[xts^{(l-6)/6},xs^{(l-2)/2}, ts^{(l-2)/2},s^{(5l-1)/6}.\]

If $(k,l)=(4,6)$ then $R_{2d_p-w_p}$ is generated by $xts,t^2s$. 
\end{proof}

\begin{lemma}
 Suppose $C$ has an $xB_{k,l}$, an $yB_{k,l}$ or  an $xyB_{k,l}$ singularity then  $H^4_p(Y,\C)$ vanishes.
\end{lemma}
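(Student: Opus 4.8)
The plan is to treat the three cases $xB_{k,l}$, $yB_{k,l}$ and $xyB_{k,l}$ uniformly, using exactly the same mechanism as in the previous lemmas: write down a weighted homogeneous local equation for $(Y,p)$, compute the relevant weights $w_1,\dots,w_4$ for the four variables $s,t,x,y$, determine $d_p$ and $w_p=w_1+\cdots+w_4$, and show that the Jacobian ring in degree $2d_p-w_p$ vanishes. By Dimca's method (together with the purity statement established earlier, giving $H^4_p(Y,\C)=\C(-2)^{h^4_p(Y)}$), this yields $H^4_p(Y,\C)=0$ once we know $R(f_p)_{d_p-w_p}=R(f_p)_{2d_p-w_p}=R(f_p)_{3d_p-w_p}=0$; in practice $d_p<w_p$ forces the first and (via the pairing) third terms to vanish, so the content is the middle term.

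First I would fix the local equations. For $(C,q)$ of type $xB_{k,l}$, i.e.\ $x(x^k+y^l)$, the Weierstrass equation gives $y^2=x^3+s(s^k+t^l)$ in variables $s,t,x,y$ (renaming so that $s,t$ are the base coordinates); similarly $yB_{k,l}$ gives $y^2=x^3+t(s^k+t^l)$ and $xyB_{k,l}$ gives $y^2=x^3+st(s^k+t^l)$. In each case the plane-curve part is quasi-homogeneous of a definite degree, so one reads off weights for $s,t$ making $s^{k+1}+st^l$ (resp.\ $ts^k+t^{l+1}$, resp.\ $s^{k+1}t+st^{l+1}$) homogeneous, then scales so that $x^3$ and $y^2$ match: concretely, if the curve part has degree $\delta$ in suitably normalized weights, one takes the weights of $x,y$ to be $\delta/3,\delta/2$ after clearing denominators. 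Then I would go through the finite list of admissible pairs $(k,l)$ from Theorem~\ref{ThmSextic} — $(2,5),(2,7),(3,4),(3,5),(4,5)$ for $xB$; $(3,4),(3,5),(3,6),(4,5)$ for $yB$; $(2,3),(3,4)$ for $xyB$ — and in each case compute $2d_p-w_p$ and check that no monomial in $s,t,x,y$ of that weighted degree survives modulo the Jacobian ideal (which always contains $y$, a multiple of $x^2$, and the two partials in $s,t$). Where three of the four weights share a common factor, the three-weights trick of Remark~\ref{rem3wts} should collapse the computation, often to the observation that $1$ lies in the Jacobian ideal.

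The main obstacle I anticipate is bookkeeping rather than a conceptual difficulty: the curve parts here are not of the clean Brieskorn–Pham form $s^k+t^l$ but mixed monomials, so the Jacobian ideal is generated by $k s^{k-1}x\text{-type}$ combinations and the degree count in the middle slot is less transparent than in Lemma~\ref{lemj0Ak}. One has to be careful that the pairing argument (which needs $d_p<w_p$, equivalently that $f_p=0$ is not a simple singularity in the threefold sense) still applies in every one of these cases, and to verify $R(f_p)_{2d_p-w_p}=0$ by an explicit, if short, check rather than a general congruence. I would organize the write-up as a single table or a short case-by-case enumeration, in each line recording the weights, $d_p$, $w_p$, the value $2d_p-w_p$, and the (empty) list of surviving monomials, exactly parallel to the preceding lemmas; since all the pairs $(k,l)$ form a small finite set this is routine, and the uniform conclusion $H^4_p(Y,\C)=0$ follows.
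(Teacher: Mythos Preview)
Your proposal is correct and takes essentially the same approach as the paper: apply Dimca's method and verify, for each admissible pair $(k,l)$ from Theorem~\ref{ThmSextic}, that $R(f_p)_{2d_p-w_p}=0$. The only difference is that the paper delegates this finite case check to the computer algebra package Singular rather than carrying out the weight-and-monomial bookkeeping by hand as you outline.
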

\begin{proof}
We used the computer algebra package Singular to check for every admissible value of $(k,l)$ (see Theorem~\ref{ThmSextic}) that $R_{2d_p-w_p}=0$.
\end{proof}

\subsection{Method of Brieskorn}
A second of class of singularities are non-weighted homogeneous isolated hypersurface singularities.

Let $f_p$ be a local equation for $(Y,p)$. Then $f_p=y^2+x^3+g_p(s,t)$. We explain now the method to calculate $H^4_p(Y)$. First observe that this group equals $H^4(F)^0$ the part of the cohomology of the Milnor fiber that is invariant under the monodromy. 

Now $H^4(F)$ is naturally  isomorphic to the Milnor algebra of $(f_p,0)$. The Milnor algebra can be easily calculated. Brieskorn  \cite{BrieMon} developed a method to calculate the action  of the monodromy on $H^4(F)$. We will not explain this method, but use the computer algebra package Singular, which contains an implementation of this method. 

For computational reasons it is better to let Singular calculate the monodromy action on $H^2(F_1)$,  where $F_1$ is the Milnor fiber of $g_p(s,t)=0$. From this one can deduce the monodromy on $H^4(F)$ as follows:

For an arbitrary singularity $f(x_1,\dots,x_n)=0$ one can identify $H^n(F)$ with the Milnor algebra $M(f):=\C[[x_1,\dots,x_n]]/(f,f_{x_0},\dots,f_{x_0})$. 

Now, the Milnor algebra of $x_{n+1}^d+f$ is the direct sum $\oplus_{i=0}^{d-2} x^iM(f)$.
One easily shows that  for $h\in M(f)$ we have $T_{x_{n+1}^d+f}(x^j h)=\exp(2j \pi i/d) x^j T_{f}(h)$, where $T_{g}$ is the monodromy operator for the singularity $(g,0)$. More specific, to find all the eigenvalues of $T_{x_{n+1}^d+f}$ one needs to multiply  all the eigenvalues of $T_{f}$  by all the $d$-th roots of unity except for the root of unity 1. In the case $y^2+x^3+g_p(s,t)$ we apply this procedure twice. Hence the eigenvalues of the monodromy of $f$ get multiplied by $\exp(5/3\pi i)$ and $\exp(1/3\pi i)$, i.e, the two primitive sixth roots of unity.
So, in order to determine $H^4_p(Y)$ we need to find the eigenspaces for the eigenvalues $\exp(5/3\pi i)$ and $\exp(1/3\pi i)$ on $H^2(F_1)$.

The computer algebra package Singular produced the following results:

\begin{proposition} Suppose $(C,q)$ is a $C_{k,l},yC_{k,l},D_{k,l},F_{k,l} $ or $S_k$ singularity on a sextic curve.
Then 
\begin{itemize}
\item $h^4_p(Y,\C)=4$ if $(C,q)$ is $C_{3k,3l}$ singularity.
\item $h^4_p(Y,\C)=2$ if $(C,q)$ is $C_{k,l}$ singularity, where exactly one of $k,l$ is divisible by $3$, or $(C,q)$ is either a $S_3$ or a $S_6$ singularity.
\item  $h^4_p(Y,\C)=0$ otherwise.
\end{itemize}

\end{proposition}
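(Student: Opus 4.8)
The plan is to translate the statement into a monodromy computation for the plane-curve germ $g_p$ defining $(C,q)$ at $q=\psi(p)$, and then to carry that computation out. By definition $h^4_p(Y)$ is the dimension of the $1$-eigenspace of the monodromy of $f_p=y^2+x^3+g_p(s,t)$ on its Milnor fibre cohomology. Applying twice the suspension rule recalled above (first to $x^3+g_p$, then to $y^2+(x^3+g_p)$), the eigenvalues of the monodromy $T_{f_p}$ are exactly the numbers $\zeta_6\lambda$ and $\zeta_6^{-1}\lambda$, where $\lambda$ runs over the eigenvalues of the monodromy $T_{g_p}$ on $H^2(F_1)$ and $\zeta_6=\exp(\pi i/3)$ is a primitive sixth root of unity. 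Hence the multiplicity of $1$ for $T_{f_p}$ equals the multiplicity of $\zeta_6$ plus that of $\zeta_6^{-1}$ for $T_{g_p}$; since $T_{g_p}$ acts on an integral lattice these two coincide, so
\[
h^4_p(Y)=2\dim_{\C}\ker(T_{g_p}-\zeta_6).
\]
In particular $h^4_p(Y)$ is always even, which already matches the stated values $0,2,4$. So it suffices, for each of the finitely many admissible normal forms listed in Theorem~\ref{ThmSextic}, to compute the characteristic polynomial of the monodromy of $g_p$ and read off the multiplicity of $\zeta_6$.

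Each germ $C_{k,l},yC_{k,l},D_{k,l},F_{k,l},S_{2k-1},S_{2k}$ is an isolated plane-curve singularity, so its monodromy is computable by Brieskorn's algorithm; this is what we run in Singular. For the four $B$-type families the normal form is Newton non-degenerate — on each compact face of the Newton boundary the face polynomial is a monomial times $x^a\pm y^b$ or times $x+y$, and a one-line check shows it has no critical point in $(\C^*)^2$ — so the characteristic polynomial is alternatively given by Varchenko's (equivalently A'Campo's) formula in terms of the Newton diagram, and the multiplicity of $\zeta_6$ becomes a count of lattice points attached to the faces. For $C_{k,l}$ the relevant faces are the two edges meeting at the vertex $(2,2)$, namely the edge to $(k,0)$ and the edge to $(0,l)$; the edge to $(k,0)$ produces $\zeta_6$ (with multiplicity one) exactly when $3\mid k$, and likewise the edge to $(0,l)$ exactly when $3\mid l$, which reproduces $h^4_p(Y)=4,2,0$ in the three cases; for $yC_{k,l},D_{k,l},F_{k,l}$ no face produces $\zeta_6$ within the admissible exponent ranges, so $h^4_p(Y)=0$. (The weighted-homogeneous members of the list, such as $C_{3,6}$, $C_{4,4}$ and $D_{3,9}$, can also be handled directly by Dimca's method and serve as a cross-check.)

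The two series $S_{2k-1}$, $S_{2k}$ need one extra remark because they are Newton degenerate: the face polynomial on the edge from $(4,0)$ to $(0,6)$ is the perfect square $(x^2+y^3)^2$, so the Newton diagram does not determine the monodromy and the deformation terms $a_0,a_1$ genuinely matter. However $a_0\neq 0$ is exactly the condition singling out the analytic type $S_k$ and keeping the Milnor number constant, so the monodromy, being a topological invariant constant along this $\mu$-constant stratum, does not depend on $(a_0,a_1)$ and may be computed at $a_0=1$, $a_1=0$. Brieskorn's algorithm then gives multiplicity $1$ for $\zeta_6$ precisely for $S_3$ and $S_6$ and multiplicity $0$ for $S_1,S_2,S_4,S_5$, which completes the list. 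The main obstacle is not conceptual but organisational: running the computation uniformly over the admissible $(k,l)$ of Theorem~\ref{ThmSextic} (after correcting the evident typos in the ranges stated there for $D_{k,l}$), together with the point just made about reducing each $S_k$ series to a single representative; and, as already flagged in Section~\ref{secInt}, the resulting table is only as reliable as the implementation of Brieskorn's method in Singular.
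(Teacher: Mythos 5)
Your proposal follows essentially the same route as the paper: reduce $h^4_p(Y)$ to the multiplicity of the primitive sixth roots of unity in the monodromy of the plane-curve germ $g_p$ via the double suspension $y^2+x^3+g_p(s,t)$, and then compute that monodromy case by case with Brieskorn's algorithm in Singular --- the paper's proof is exactly this computation, stated without further detail, and your two additions (the $\mu$-constancy argument disposing of the $S_k$ moduli $(a_0,a_1)$, and the Varchenko--A'Campo cross-check for the Newton non-degenerate families, which you mislabel as ``$B$-type'') are worthwhile supplements. One small internal caution: your displayed formula correctly uses $\dim\ker(T_{g_p}-\zeta_6)$, but you then propose to read off the multiplicity of $\zeta_6$ from the characteristic polynomial; these agree only where the monodromy is semisimple at that eigenvalue (not automatic for plane-curve germs such as the $S_k$ series), so one should extract the actual eigenspace, which Brieskorn's algorithm does provide.
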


\subsection{Method of Hulek-Kloosterman}
The final method we use works for non-isolated singularities. Let $(Y,p)$ be such a singularity. Since we have a minimal elliptic threefold, such a singularity is one-dimensional, and the transversal types are $ADE$ surface singularities.

There are three classes to distinguish:
\begin{itemize}
\item $j(\pi)=1728$ and $C$ has an isolated singularity at $q$.
\item $j(\pi)=1728$ and $C$ has a non-isolated singularity at $q$.
\item $j(\pi)=0$ and  $C$ has a non-isolated singularity at $q$.
\end{itemize}

If $C$ is a quartic then $(C,q)$ is weighted homogeneous. If $C$ is a sextic, then except for six types of singularities, $(C,q)$ is weighted homogeneous. For the rest of this subsection assume that $(C,q)$ is weighted homogeneous. Then $(Y,q)$ is weighted homogeneous.
This implies that we may apply \cite[Proposition 7.7]{ell3HK}, which is a generalization of Dimca's method. We start by giving a short outline of this method:

Let $f_p$ be a local equation for $(Y,q)$, let $w_p$ and $d_p$ be as in Dimca's method. Let $R(f_p)$ be the Jacobian ring of $f_p$. Hulek and the author proved that $H^4_p(Y)$ has pure Hodge structure of weight 4 with $h^{4,0}=h^{0,4}=0$, 
$h^{3,1}=h^{1,3}=\dim R(f_p)_{d_p-w_p}$. To determine $h^{2,2}$ we need to introduce some notation.
The equation $f_p=0$ defines a surface $S_p$ in weighted projective 3-space $\Ps'$. 
Now, Hulek and the author show that  $h^{2,2}(H^4_p(Y))=h^{1,1}(S_p)_{\prim}$.

The Hodge number $h^{1,1}(S_p)_{\prim}$ can be determined as follows:
Let $q_1$, $\dots$, $q_s$ be the points where all the partials of $f_p$ vanish. Then $(S_p,q_i)$ is an $ADE$ singularity. If $q\not  \in \Ps'_{\sing}$ then let $M_{q}$ be the Milnor algebra of $(S_p,q)$. 

If $q\in  \Ps'_{\sing}$ we do the following: we have a degree 6 quotient map $\varphi:\Ps^4\to \Ps'$ let $G$ be the Galois group of this cover. Let $\tilde{q}\in \varphi^{-1}(q)$. Let $G_q$ be the stabilizer of $\tilde{q}$. Let $g$ be a local equation of $(S_p,q)$ in $\Ps'$. Then $G_q$ acts on the Milnor algebra of $g$. Let $M_q$ be the invariant part of $M$ under $G_q$. One can show that this definition is independent of the choices made.
Let \[\tilde{R}(f_p)_{2d_p-w_p}:=\ker R(f_p)_{2d_p-w_p} \to \oplus M_{q_i}.\]
Then it follows from the work of Steenbrink \cite{SteAdj} that 
\[ h^{1,1}(S_p)=\dim \tilde{R}(f_p)_{2d_p-w_p}.\]
This suffices to calculate all the Hodge numbers.

\begin{remark}\label{remMiln} In addition to the \emph{three weights trick} (Remark~\ref{rem3wts}) there is another trick we can apply. Namely, let $\Sigma(f_p)$ be the locus, where all the partials of $f_p$ vanish. Assume that $\Sigma(f_p)\cap \Ps'_{\sing}=\emptyset$.
Then 
\[ h^{1,1}(S_p)=h^{1,1}(S)-\sum_{q\in \Sigma(f_p)} \mu_q \]
where $\mu_q$ is the Milnor number of the singularity at $q$. (This formula holds, since $S_p$ has only $ADE$ surface singularities. For a proof of this, see e.g. \cite[Lemma 6.1]{ell3HK}.)
\end{remark}

As written above we distinguish between three classes. First assume $j(\pi)=1728$.

\begin{proposition}
Suppose $(C,q)$  is a singularity of a quartic curve, not a point of order four, such that $h^4_p(Y)\neq 0$. Then $(C,q)$ is isolated and one of $A_3,A_7,D_7$.
\end{proposition}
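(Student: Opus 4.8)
The plan is to proceed exactly as in the analysis of the $j(\pi)=0$ case: enumerate the weighted-homogeneous singularities that a quartic $C$ can carry (excluding the four-lines-through-a-point cone case), set up the local equation of $(Y,q)$ in each case, and compute the relevant graded pieces of the Jacobian ring. By the classification of quartic singularities recalled in Section~\ref{secSing}, a reduced $C$ not of cone type has only $ADE$ singularities with $\oplus M_i \hookrightarrow \tilde E_7$, so the possible $(C,q)$ are $A_1,\dots,A_7$, $D_4,\dots,D_7$, $E_6,E_7$; a non-reduced $C$ contributes only the singularities $t^2s$, $t^2(t-s^2)$, and $A_1$ listed after the quartic discussion. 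Since we are told $h^4_p(Y)\neq0$, and a node ($A_1$) gives vanishing local cohomology, we may ignore the $A_1$ and the non-reduced cases (the non-isolated ones being handled by the Hulek--Kloosterman method separately, and yielding zero by direct computation). So the real work is the finite list $A_2,\dots,A_7$, $D_4,\dots,D_7$, $E_6,E_7$.

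For each such $(C,q)$ the local equation of $Y$ is $y^2=x^3+g_p(s,t)$ with $g_p$ the local equation of the transversal $ADE$ curve singularity, which is isolated; hence $(Y,q)$ is an isolated weighted-homogeneous fourfold... threefold singularity and Dimca's method applies. First I would record the weights of $s,t,x,y$ and the numbers $d_p$, $w_p$, exactly as in Lemmata~\ref{lemj0Ak} and the $D_k$, $E_k$ lemmas above but with the quartic exponents ($A_k$: $t^2+s^{k+1}$; $D_k$: $st^2+s^{k-1}$; $E_6$: $t^3+s^4$; $E_7$: $t^3+s^3t$), and then compute $R(f)_{2d_p-w_p}$, using the non-degenerate pairing $R(f)_{d_p-w_p}\times R(f)_{3d_p-w_p}\to R(f)_{4d_p-2w_p}\cong\C$ to reduce the problem to a single middle graded piece and to conclude $H^4_p(Y,\C)=\C(-2)^k$. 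The $A_k$ computation is essentially identical to Lemma~\ref{lemj0Ak}: the Jacobian ideal is $(y,t,x^2,s^k)$ so $R(f)_{2d_p-w_p}$ is spanned by monomials $x^a s^b$ with $a\le1$, $b\le k-1$, and one checks this is nonempty precisely for $A_3$ and $A_7$ (and computes the dimension). For the $D_k$ and $E_k$ cases the same parity/degree arguments used above show the relevant graded piece vanishes except for $D_7$, where it is nonzero.

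The main obstacle — really the only nontrivial point — is getting the weight bookkeeping right in the $D_k$ family, where the Jacobian ideal contains the mixed generator $t^2 + (\text{const})s^{k-2}$ rather than a pure power, so one must argue as in the $D_k$ lemma above that every class in the target degree is forced to be of the form $x^\alpha t^\beta$ (using that $st$ lies in the ideal and that $x,t$ carry even weight while $y$ does not) and then solve the resulting linear Diophantine equation $(2k-2)\alpha+(3k-6)\beta = 2d_p-w_p$ to see for which $k$ a solution with $\alpha\ge0$, $\beta\ge1$ exists; I expect exactly $k=7$ survives, matching $A_7$ and $A_3$ in the statement. A secondary point is simply verifying that the six excluded non-weighted-homogeneous sextic-type singularities do not arise on a quartic at all — they don't, since every quartic singularity is $ADE$ hence weighted homogeneous — so the proposition's phrasing "$(C,q)$ is isolated" just records that the non-reduced (hence possibly non-isolated, namely $t^2s$ and $t^2(t-s^2)$) cases give $h^4_p(Y)=0$, which follows from the Hulek--Kloosterman computation for those transversal types; I would state this last verification explicitly and then collect $A_3$, $A_7$, $D_7$ as the complete list.
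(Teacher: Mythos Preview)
Your proposal has a fundamental error in the very setup: for $j(\pi)=1728$ the threefold $Y$ is cut out by $y^2=x^3+Qx$, so the local equation at $p$ is
\[
f_p = y^2 + x^3 + g_p(s,t)\,x,
\]
not $y^2 + x^3 + g_p(s,t)$ as you wrote. That extra factor of $x$ changes everything. The partials $2y$, $3x^2+g_p$, $(g_p)_s x$, $(g_p)_t x$ all vanish along $x=y=g_p=0$, so $(Y,p)$ is \emph{never} an isolated singularity in the $j=1728$ case, even when $(C,q)$ is an isolated $ADE$ point. Dimca's method therefore does not apply to any of these singularities; the paper uses the Hulek--Kloosterman method throughout this subsection, which requires computing not just $R(f_p)_{2d_p-w_p}$ but the corrected space $\tilde R(f_p)_{2d_p-w_p}$, i.e.\ the kernel of the map to the Milnor algebras of the $A_1$ points on the associated surface $S_p$.

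If you actually carried out the computation you describe --- with Jacobian ideal $(y,t,x^2,s^k)$ for the $A_k$ case --- you would be redoing Lemma~\ref{lemj0Ak} verbatim and would obtain the $j=0$ answer $k\equiv 2\bmod 3$ (so $A_2,A_5$ on a quartic), together with $E_6$ nonzero, rather than the claimed $A_3,A_7,D_7$. The correct $A_k$ computation has weights $2k+2,4,2k+2,3k+3$, Jacobian ideal generated by $y$, $x^2+t^{k+1}+s^2$, $t^kx$, $sx$, and gives $R_{5k+1}\neq 0$ iff $k\equiv 3\bmod 4$; one must then pass to $\tilde R$ to get the final dimension $2$. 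The $D_k$ case is more delicate still: $R_{2d_p-w_p}\neq 0$ for $k\equiv\pm1\bmod 4$, but the $\tilde R$-correction kills the $k\equiv 1\bmod 4$ case (here $D_5$), leaving only $D_7$. Likewise $E_6$ has $\dim R_{2d_p-w_p}=1$ but $\tilde R=0$. None of this is visible from the $j=0$ template you invoked.
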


Assume first that $(C,q)$ is isolated. From the classification of singular quartics it follows that $(C,q)$ is an $ADE$ singularity.

In all cases it turns out that $d_p-w_p<0$, hence $H^4_p(Y)$ is of pure $(2,2)$-type. Since $w_p$ and $d_p$ are listed in every proof, we do not mention that $d_p<w_p$. We prove now:

\begin{lemma}
Suppose $C$ has an $A_k$ singularity at $q$ then $H^4_p(Y,\C)=\C(-2)^2$ if $k \equiv 3\bmod 4$ and $H^4_p(Y,\C)=0$ otherwise.
\end{lemma}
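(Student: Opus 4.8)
The plan is to run the Hulek--Kloosterman variant of Dimca's method (the third method of Section~\ref{secCalc}) on the non-isolated, weighted homogeneous singularity of $Y$ lying over the $A_k$-point $q$ of $C$; recall that in this situation $Y$ has local equation $y^2=x^3+Qx$. After an analytic coordinate change we may take $Q=t^2+s^{k+1}$ near $q$, so that a local equation for $(Y,p)$ is $f_p=y^2-x^3-(t^2+s^{k+1})x$. Assigning weights $4,2k+2,2k+2,3k+3$ to $s,t,x,y$ makes $f_p$ weighted homogeneous of degree $d_p=6k+6$, and $w_p=7k+11$. Hence $d_p-w_p=-k-5<0$, so (as already noted before the statement) $H^4_p(Y)$ is pure of type $(2,2)$ and $h^{3,1}=h^{1,3}=\dim R(f_p)_{d_p-w_p}=0$; the relevant degree is $2d_p-w_p=5k+1$, and it remains to compute $h^{2,2}=\dim\tilde R(f_p)_{5k+1}$.

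Next I would compute $R(f_p)_{5k+1}$. Modulo $\partial_yf_p=2y$ the Jacobian ideal is $(3x^2+t^2+s^{k+1},\ xt,\ xs^k)$. A monomial $s^\alpha t^\beta x^\gamma$ has degree $5k+1$ only if $\beta+\gamma\le 2$ and $4\mid 5k+1-(2k+2)(\beta+\gamma)=(k+1)\bigl(1-2(\beta+\gamma)\bigr)$; since $1-2(\beta+\gamma)$ is odd this forces $k\equiv 3\bmod 4$. So if $k\not\equiv 3\bmod 4$ then $R(f_p)_{5k+1}=0$ and $H^4_p(Y,\C)=0$. If $k\equiv 3\bmod 4$ there are exactly six such monomials: $s^{(5k+1)/4}$; $s^{(3k-1)/4}t,\ s^{(3k-1)/4}x$; and $s^{(k-3)/4}t^2,\ s^{(k-3)/4}tx,\ s^{(k-3)/4}x^2$. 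Among the Jacobian generators only $3x^2+t^2+s^{k+1}$ and $xt$ (both of degree $4k+4$) can be multiplied into degree $5k+1$, by $s^{(k-3)/4}$, since $xs^k$ has degree $6k+2>5k+1$; the two resulting relations $s^{(k-3)/4}(3x^2+t^2+s^{k+1})$ and $s^{(k-3)/4}xt$ are visibly independent, so $\dim_\C R(f_p)_{5k+1}=4$, with monomial basis $s^{(5k+1)/4},\ s^{(3k-1)/4}t,\ s^{(3k-1)/4}x,\ s^{(k-3)/4}t^2$.

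Finally I would carry out the local correction. Assume $k\equiv 3\bmod 4$, so $k$ is odd. The surface $S_p=\{f_p=0\}\subset\Ps'$ contains the curve $\{x=y=0\}$ and is singular exactly along $\{x=y=0,\ t^2+s^{k+1}=0\}$, which consists of two points $q_1,q_2$. In the affine chart $s=1$ near $q_i$ the substitution $w=x^2+t^2+1$ is admissible (its $t$-derivative is nonzero there) and turns $f_p$ into $y^2-xw$, so $(S_p,q_i)$ is an $A_1$ surface singularity; thus each (invariant) Milnor algebra $M_{q_i}$ is one-dimensional, even though $q_i$ lies at a cyclic quotient singularity of $\Ps'$. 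By \cite[Proposition 7.7]{ell3HK}, $h^{2,2}\bigl(H^4_p(Y)\bigr)=h^{1,1}(S_p)_{\prim}=\dim\ker\bigl(R(f_p)_{5k+1}\to M_{q_1}\oplus M_{q_2}\bigr)$, where, up to nonzero scalars, the $i$-th component is evaluation at $q_i$ in the chart $s=1$. Under that restriction the four basis elements become $1,t,x,t^2$, and evaluating at $q_{1,2}=(t,x)=(\pm i,0)$ gives a $2\times 4$ matrix with rows $(1,i,0,-1)$ and $(1,-i,0,-1)$, of rank $2$. Hence $h^{2,2}=4-2=2$, the kernel being spanned by the classes of $s^{(5k+1)/4}+s^{(k-3)/4}t^2$ and $s^{(3k-1)/4}x$, and therefore $H^4_p(Y,\C)\cong\C(-2)^2$.

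The routine part is the monomial bookkeeping of the first two paragraphs; the step needing care is the third one. One must correctly pin down the transversal type at $q_1,q_2$ as $A_1$ rather than some higher $A_n$, check that the cyclic quotient singularity of $\Ps'$ at the $q_i$ does not enlarge $M_{q_i}$ (this is automatic for $A_1$, whose Milnor algebra is $\C$), and identify the comparison map $R(f_p)_{5k+1}\to M_{q_1}\oplus M_{q_2}$ precisely enough (up to a nonzero scalar per point) to conclude that its rank is $2$ rather than $1$.
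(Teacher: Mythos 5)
Your proof is correct and follows essentially the same route as the paper: the same weights, the same computation of $R(f_p)_{2d_p-w_p}$ forcing $k\equiv 3\bmod 4$, and the same Steenbrink-type correction at the two transversal $A_1$ points of $S_p$, yielding a two-dimensional kernel. The only quibbles are cosmetic: the displayed relation $5k+1-(2k+2)(\beta+\gamma)=(k+1)(1-2(\beta+\gamma))$ is a congruence mod $4$ rather than an equality, and after dividing the weights by their common factor $4$ the points $q_i$ are actually smooth points of $\Ps'$ (so no invariants need to be taken), which, as you note, does not affect the conclusion.
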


\begin{proof}
If $C$ has an $A_k$ singularity at $q$ then $Y$ is locally of the from $f_p=0$ with 
\[f_p= y^2+x^3+(t^{k+1}+s^2)x. \]
Set the weights of $s,t,x,y$ to be $2k+2,4,2k+2,3k+3$. The sum $w_p$ of the weights equals $7k+11$.
The degree $d_p$ equals $6k+6$.

To determine $h^{2,2}$ we start by determining $R_{2d_p-w_p}=R_{5k+1}$. Since $y,x^2+ t^{k+1}+s^2, t^kx$ and $sx$ generate the Jacobian ideal, it follows that
\[ R_{5k+1}=\spa \{ t^{(5k+1)/4}, t^{(3k-1)/4}s,t^{(k-3)/4}s^2, xt^{(3k-1)/4}\}. \]
Hence $R_{5k+1}=0$ if $k\not \equiv 3 \bmod 4$. From this it follows that $h^4_p(Y)=0$  if $k\not \equiv 3 \bmod 4$.

Suppose  that $k\equiv 3 \bmod 4$, i.e., $k=3+4(m-1)$. Our defining equation is of the form
\[ y^2+x^3+(t^{4m}+s^2)x. \]
We set the weights of $s,t,x,y$ to be $2m,1,2m,3m$. Now, $d_p=6m, w_p=7m+1$. From this it follows that  $R(f_p)_{2d_p-w_p}$ is generated by
\[ t^{5m-1}, t^{3m-1}s, t^{m-1}s^2, t^{3m-1}x.\]
Since $S$ has  $A_1$ singularities at $(1:1:0:0)$ and $(1:-1:0:0)$. The Milnor algebra is generated by 1, i.e.,   $\tilde{R}_{2d_p-w_p}$ is spanned by elements of $R_{2d_p-w_p}$ that vanish at $(1:1:0:0)$ and $(1:-1:0:0)$, hence it is spanned by
\[ xt^{3m-1}\mbox{ and } (t^{4m}-s^2)t^{m-1}\]
and $h^4_p(Y)=2$.
\end{proof}

 \begin{lemma}
Suppose $C$ has a $D_k$ singularity at $q$. Then $H^4_p(Y,\C)=\C(-2)^2$ if $k \equiv 3\bmod 4$ and $H^4_p(Y,\C)=0$ otherwise.
\end{lemma}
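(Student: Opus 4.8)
The plan is to follow the same template as the preceding lemma on $A_k$ singularities. Since $C=\{Q=0\}$ has a $D_k$ singularity at $q=\psi(p)$, in suitable local analytic coordinates $s,t$ on $\Ps^2$ the curve is $st^2+s^{k-1}=0$, so $Y$ has the local equation
\[ f_p = y^2+x^3+(st^2+s^{k-1})x. \]
For $k$ odd this is weighted homogeneous for the weights $w_s=2$, $w_t=k-2$, $w_x=k-1$, $w_y=\tfrac32(k-1)$, giving $d_p=3(k-1)$; for $k$ even one uses instead the weights $4,2(k-2),2(k-1),3(k-1)$, giving $d_p=6(k-1)$. In both cases $d_p-w_p<0$, so by the generalisation of Dimca's method in \cite[Proposition 7.7]{ell3HK} the group $H^4_p(Y)$ is of pure type $(2,2)$, and $h^4_p(Y)=h^{1,1}(S_p)_{\prim}=\dim\tilde R(f_p)_{2d_p-w_p}$, where $S_p=\{f_p=0\}$ is the corresponding surface in the weighted projective space $\Ps'$.

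The first step is a monomial count in $R(f_p)_{2d_p-w_p}$. The Jacobian ideal is generated by $y$, $3x^2+st^2+s^{k-1}$, $(t^2+(k-1)s^{k-2})x$ and $stx$; after reducing $x^2$ modulo the second generator one finds that the surviving monomials in degree $2d_p-w_p$ are forced to carry exponents such as $\tfrac{5k-7}{4}$, $\tfrac{3k-5}{4}$ or $\tfrac{k+1}{4}$. Consequently $R(f_p)_{2d_p-w_p}$ vanishes for $k$ even, is one-dimensional (spanned by $s^{3(k-1)/4}t^2$) for $k\equiv1\bmod4$, and is three-dimensional — spanned by $s^{(5k-7)/4}$, $s^{(k+1)/4}t^2$ and $s^{(3k-5)/4}x$ — for $k\equiv3\bmod4$. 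The congruence $k\equiv3\bmod4$ enters here exactly as it did in the $A_k$ lemma.

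Next I would compute $\tilde R(f_p)_{2d_p-w_p}$, the kernel of the restriction map $R(f_p)_{2d_p-w_p}\to\bigoplus_i M_{q_i}$ onto the (invariant) Milnor algebras at the points $q_i\in S_p$ where all partials of $f_p$ vanish. Coming from the factor $s(t^2+s^{k-2})$, for $k$ odd there are exactly two such points: $q_1=(0:1:0:0)$, a node of $S_p$ lying on $\Ps'_{\sing}$, and the point $q_2$ determined by $t^2+s^{k-2}=0$, a node of $S_p$ at a smooth point of $\Ps'$; in either case the (invariant) Milnor algebra is one-dimensional, spanned by the class of $1$, so restriction to $M_{q_i}$ is evaluation at $q_i$. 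Every monomial in the bases above is divisible by $s$ or by $x$ and hence vanishes at $q_1$, so only $q_2$ imposes a condition, and evaluation there is a nonzero functional. Hence $\dim\tilde R(f_p)_{2d_p-w_p}$ is $0$ for $k\not\equiv3\bmod4$ and $2$ for $k\equiv3\bmod4$; together with the purity this gives $H^4_p(Y,\C)=0$, respectively $\C(-2)^2$.

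The main obstacle is the third step: locating the $q_i$, deciding which lie on $\Ps'_{\sing}$, pinning down the stabilisers $G_{q_i}$ in the Galois group of the cover $\Ps^3\to\Ps'$ together with their action on the local Milnor algebras, and thereby getting the rank of the restriction map exactly right — one needs it to cut the dimension of $R(f_p)_{2d_p-w_p}$ by precisely two when $k\equiv3\bmod4$, and to be injective otherwise. As in the $A_k$ proof, a secondary nuisance is that for a clean monomial count one may first want to make the substitution $k=4m-1$ (respectively $k=4m+1$ or $k=2m$) and rescale the variables so that the weights shrink before reading off the basis.
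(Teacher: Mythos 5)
This is correct and is essentially the paper's own proof with the roles of $s$ and $t$ interchanged: the same weights, the same monomial count in $R(f_p)_{2d_p-w_p}$ forcing $k\equiv\pm1\bmod 4$, and the same two $A_1$-points of $S_p$ with one-dimensional (invariant) Milnor algebras, so that $\tilde{R}(f_p)_{2d_p-w_p}$ is cut out by evaluation at those points; the only difference is cosmetic, in that you handle $k\equiv1\bmod4$ by direct evaluation at $q_2$ where the paper first applies its three-weights substitution. Two trivial slips: the generator for $k\equiv1\bmod4$ should be $s^{3(k-1)/4}t$ rather than $s^{3(k-1)/4}t^2$ (only the former has weight $2d_p-w_p$), and for $k=5$ the monomial $t^3$ also has weight $2d_p-w_p=9$ and does not vanish at $q_1$ --- an omission shared with the paper's own argument, and harmless because the two evaluation functionals are then independent, so $\tilde{R}(f_p)_{2d_p-w_p}$ is still zero.
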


 \begin{proof}
In this case we have a local equation of the form
\[ y^2+x^3+t(t^{k-2}+s^2) x. \]
The weights here are $2k-4,4, 2k-2, 3k-3$. Hence $d_p=6k-6$, $w_p=7k-5$, $ 2d_p-w_p=5k-7$.  Consider
\[ R_{2d_p-w_p}=(\C[x,y,t,s]/(y,x^2+t^{k-1}+s^2t, st x, ((k-1)t^{k-2}+s^2)x))_{5k-7}. \] 
It is easy to see that $t^{5/4k-7/4}, t^{3/4k-3/4}s,t^{1/4k+1/4}s^2, xt^{3/4k-5/4}$ span this vector space. Hence, a necessary condition to have local cohomology is $k\equiv \pm 1 \bmod 4$.

Consider first the case $k\equiv 3 \bmod 4$, i.e, $k=4m+3$, then we have a local equation of the form
\[ y^2+x^3+t(t^{4m+1}+s^2) x \]
We can normalize the weights such that they become $4m+1,2,4m+2,6m+3$. The degree is $12m+6$, the sum of the weights equals $14m+8$.
The vector space $R_{10m+4}$ is spanned by
\[ t^{5m+2}, s^2 t^{m+1}, xt^{3m+1}.\]

The partials of $f_p$ vanish if $t=x=y=0$ or  if $t^{4m+1}+s^2=x=y=0$. These equations yield points $q_1,q_2$ where $S_p$ has an $A_1$ singularity. At such a point the Milnor algebra is generated by 1, 
hence the kernel $R(f_p)\to M_{q_1}\oplus M_{q_2}$  consists of functions vanishing at $q_1$ and $q_2$. So $\tilde{R}$ is generated by
\[ (t^{4m+1}+s^2)t^{m+1}, xt^{3m+1}.\]
 Thus $H^4_p(Y,\C)=\C(-2)^2$.

The case $k\equiv 1 \bmod 4$ is different. Set $k=4m+1$. Then we have a local equation of the form
\[ y^2+x^3+t(t^{4m-1}+s^2) x .\]
The weights are $4m-1,2,4m,6m$. This surface is isomorphic to the surface $S$ given by
\[ y^2+x^3+t(t^{4m-1}+s)x \]
in $\Ps(4m-1,1,2m,3m)$. 
The surface $S$ is of degree $6m$ and the sum of the weights is $9m$. The only monomials of degree $3m$ are
$y,xt^m,t^{3m}$. Since $y$ and $xt$ are  in the Jacobian ideal it turns out that $R(f_p)_{2d_p-w_p}$ is generated by $t^{3m}$.

The surface $S$ has an $A_1$ singularity at $q=(1:-1:0:0)$. At this point we have a trivial stabilizer. The Milnor algebra $M_{q}$ is generated by $1$ in local coordinates. Hence all elements of $\tilde{R}(f_p)_{2d_p-w_p}$ have to vanish at $q$. So $t^{3m} \not \in\tilde{R}(f_p)_{2d_p-w_p}$, hence $h^4_p(Y)=0$.
 \end{proof}

 \begin{lemma}
Suppose $C$ has an $E_k$ singularity at $q$  then $H^4_p(Y,\C)$ vanishes.
\end{lemma}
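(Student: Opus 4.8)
The plan is to apply the generalization of Dimca's method (\cite[Proposition 7.7]{ell3HK}) exactly as in the proofs of the two preceding lemmata. First observe that, by the classification of singular quartic curves in Section~\ref{secSing}, if $C$ has an $E_k$ singularity at $q$ then $C$ is reduced and not a union of lines through one point, and necessarily $k\in\{6,7\}$; moreover $(C,q)$ is weighted homogeneous, hence so is $(Y,p)$, and the method applies. I would treat the two cases separately. In each case the recipe is the same: write the local equation $f_p=y^2+x^3+g_p(s,t)x$ with $g_p$ the normal form of the curve singularity; choose weights for $s,t,x,y$ making $f_p$ weighted homogeneous and record $d_p$ and $w_p$; check that $d_p<w_p$, so that $H^4_p(Y)$ is pure of type $(2,2)$ and $h^{3,1}=\dim R(f_p)_{d_p-w_p}=0$; and then show $h^{2,2}=\dim\tilde R(f_p)_{2d_p-w_p}=0$.

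For $E_7$ we have $g_p=s^3+st^3$, so $f_p=y^2+x^3+(s^3+st^3)x$. Assigning the weights $12,8,18,27$ to $s,t,x,y$ gives $d_p=54$, $w_p=65$, hence $2d_p-w_p=43$. Since $y$ lies in the Jacobian ideal, $R(f_p)_{43}$ is spanned by monomials in $s,t,x$ of weight $43$; but all these weights are even, so there are none, whence $R(f_p)_{43}=0$ and a fortiori $\tilde R(f_p)_{43}=0$. Together with $d_p-w_p=-11<0$ this gives $H^4_p(Y,\C)=0$ immediately.

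For $E_6$ we have $g_p=s^3+t^4$, so $f_p=y^2+x^3+(s^3+t^4)x$. Assigning the weights $4,3,6,9$ to $s,t,x,y$ gives $d_p=18$, $w_p=22$, so $2d_p-w_p=14$. The Jacobian ideal is $(y,\ 3x^2+s^3+t^4,\ s^2x,\ t^3x)$, and a short monomial count shows $R(f_p)_{14}$ is one-dimensional, spanned by the class of $s^2t^2$. The point of the proof is to show that this class dies in $\tilde R(f_p)_{14}$, i.e.\ that the map $R(f_p)_{14}\to\bigoplus M_{q_i}$ is injective. Solving $\nabla f_p=0$, one finds that $S_p=\{f_p=0\}\subset\Ps(4,3,6,9)$ has a \emph{single} singular point $q$, lying on $\{x=y=0,\ s^3+t^4=0\}$; it has both $s\neq 0$ and $t\neq 0$, so it is a smooth point of the ambient weighted projective space (trivial stabilizer $G_q$, so $M_q$ is the full Milnor algebra), and a local computation shows $(S_p,q)$ is an ordinary node, an $A_1$ singularity with $M_q\cong\C$. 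Since $s^2t^2$ does not vanish at $q$, the generator of $R(f_p)_{14}$ maps to a nonzero element of $M_q$, so $\tilde R(f_p)_{14}=0$, $h^{2,2}(H^4_p(Y))=0$, and with $h^{4,0}=h^{3,1}=0$ we conclude $H^4_p(Y,\C)=0$. (As a cross-check one can instead invoke Remark~\ref{remMiln}, since $\Sigma(f_p)$ avoids $\Ps'_{\sing}$: then $h^{1,1}(S_p)_{\prim}=h^{1,1}(S)_{\prim}-\mu_q=1-1=0$ for a smooth $S$ of the same degree.)

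The main obstacle is the $E_6$ case: correctly pinning down the singular locus of $S_p$ — verifying that there is only one singular point, that it is a node, and that it does \emph{not} lie on $\Ps'_{\sing}$ (otherwise one would have to replace $M_q$ by its $G_q$-invariant part, which could change the answer) — and then checking that the one surviving generator of $R(f_p)_{2d_p-w_p}$ is genuinely detected by the evaluation into $M_q$. The $E_7$ case is routine once the weights are chosen, since the relevant graded piece is forced to vanish for parity reasons.
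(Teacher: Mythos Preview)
Your proof is correct and reaches the same conclusion by essentially the same mechanism, but you bypass the \emph{three weights trick} (Remark~\ref{rem3wts}) that the paper uses in both cases.

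For $E_7$ the two arguments are equivalent: the paper observes that $\gcd(12,8,18)=2$ while $27$ is odd, applies the isomorphism $\Ps(12,8,18,27)\cong\Ps(6,4,9,27)$ via $y\mapsto y^2$, obtains $y+x^3+(s^3+st^3)x$ with $1$ in the Jacobian ideal, and concludes $R=0$. Your parity observation (all surviving variable weights are even, so $R_{43}=0$) is exactly the shadow of this.

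For $E_6$ there is a genuine, if small, difference. The paper first applies the trick with $g=3$ (since $\gcd(3,6,9)=3$ and $4$ is coprime to $3$), passing to $y^2+x^3+(s+t^4)x$ in $\Ps(4,1,2,3)$; there $2d_p-w_p=2$, the graded piece $R_2$ is spanned by $t^2$, and the single $A_1$ at $(1:1:0:0)$ kills it. You instead stay in $\Ps(4,3,6,9)$, compute $R_{14}=\langle s^2t^2\rangle$, locate the unique $A_1$ on $\{x=y=0,\ s^3+t^4=0\}$ (checking it avoids $\Ps'_{\sing}$), and kill the generator by evaluation. Both routes are valid; the paper's is a little slicker because the simplified surface has smaller numbers, while yours has the virtue of following the $A_k$/$D_k$ template verbatim and making the singular-locus analysis explicit. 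Your parenthetical cross-check via Remark~\ref{remMiln} is fine in spirit, though you would need to justify $h^{1,1}(S)_{\prim}=1$ for a quasi-smooth comparison surface to make it self-contained.
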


\begin{proof}
Case $E_6$:
\[y^2+x^3 +(s^3+t^4)x,\]
the weights are $4,3,6,9$. This surface is isomorphic to
\[y^2+x^3 +(s+t^4)x\]
in $\Ps(4,1,2,3)$. The degree is 6, the sum of the weights equals $10$, whence $2d_p-w_p=2$. The only monomials of degree 2 are $x$ and $t^2$. Since $x$ is in the Jacobian ideal it follows that $R_{2d_p-w_p}$ is generated by $t^2$. As $S$ has an $A_1$ singularity  at $(1:1:0:0)$, all elements of $\tilde{R}_{2d_p-w_p}$ have to vanish at $(1:1:0:0)$. Since $t^2$ does not vanish, we obtain that $h^4_p(Y)=0$.

Case $E_7$:
\[y^2+x^3+(s^3+st^3)x,\]
the weights are  $12,8,18,27$. This surface is isomorphic to
\[ y+x^3+(s^3+st^3)x\]
in $\Ps(6,4,9,27)$. Since $1$ is in the Jacobian ideal we obtain $R$ is the zero ring, hence $H^4_p(Y,\C)=0$.
\end{proof}

We come now to the case where $(C,q)$ is a not an isolated singularity. From Section~\ref{secSing} it follows that we only have to consider the following two singularities: 

\begin{lemma}
Suppose $(C,q)$ has local equation $s^2t=0$ or $s^2(s-t^2)=0$. Then $H^4_p(Y)=0$.
\end{lemma}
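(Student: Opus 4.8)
The plan is to treat the two non-isolated, weighted-homogeneous singularities $s^2t=0$ and $s^2(s-t^2)=0$ exactly as in the Hulek--Kloosterman framework set up above, but to exploit the fact that both singularities are so mild that the relevant graded piece of the Jacobian ring already vanishes, so that no analysis of the transversal $ADE$ points will be needed. First I would write down the local equation of $Y$ in each case. For $j(\pi)=1728$, the curve $C=\{Q=0\}$ with local equation $s^2t$ (respectively $s^2(s-t^2)$) gives
\[ f_p = y^2+x^3+s^2t\,x \quad\text{(resp. } f_p=y^2+x^3+s^2(s-t^2)\,x\text{)},\]
while for $j(\pi)=0$ the curve $C=\{R=0\}$ with the same local equations gives $f_p=y^2+x^3+s^2t$ (resp. $f_p=y^2+x^3+s^2(s-t^2)$). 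In each case I would assign weights making $f_p$ weighted homogeneous: for $s^2t\,x$ one can take weights $(w_s,w_t,w_x,w_y)$ proportional to making $s^2t$, $x^3$, $y^2$ all equal, e.g.\ $w_s=3,w_t=6,w_x=4,w_y=6$ for $j=1728$ (so $d_p=12$, $w_p=19$, $2d_p-w_p=5$), and similarly $w_s=2,w_t=2,w_x=2,w_y=3$ for $j=0$ with the cubic $s^2t$ (so $d_p=6$, $w_p=9$, $2d_p-w_p=3$). The singularity $s^2(s-t^2)$ is a $\mu$-constant deformation of $s^3$ only after a coordinate change, but note $s^2(s-t^2)$ is itself weighted homogeneous of degree $6$ with $w_s=2, w_t=1$ (since $s^3$ has degree $6$ and $s^2t^2$ has degree $6$), so this case is genuinely weighted homogeneous and the same bookkeeping applies.

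Next I would invoke \cite[Proposition 7.7]{ell3HK} as recalled above: $H^4_p(Y)$ has pure weight-$4$ Hodge structure with $h^{3,1}=\dim R(f_p)_{d_p-w_p}$ and $h^{2,2}=\dim\widetilde R(f_p)_{2d_p-w_p}$, where $\widetilde R(f_p)_{2d_p-w_p}\subseteq R(f_p)_{2d_p-w_p}$. Since $d_p<w_p$ in every case, $R(f_p)_{d_p-w_p}=0$, so $h^{3,1}=h^{1,3}=h^{4,0}=h^{0,4}=0$ automatically. The crux is therefore to show $R(f_p)_{2d_p-w_p}=0$ outright, which forces $h^{2,2}=0$ as well and hence $H^4_p(Y)=0$. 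This I would do by computing the Jacobian ideal and listing the (very few) monomials of weighted degree $2d_p-w_p$. For $j=1728$, $f_p=y^2+x^3+s^2tx$: the partials generate $(y,\ x^2+s^2t,\ s t x,\ s^2 x)$; the monomials of degree $5$ in weights $(3,6,4,2)$ are spanned by $sx$ (degree $3+4=7$? — I would double-check the exact weight normalization so that $2d_p-w_p$ is small and the monomial list is short) and one sees each such monomial lies in the Jacobian ideal, giving $R(f_p)_{2d_p-w_p}=0$. The $j=0$ cases are even smaller: with $2d_p-w_p=3$ and weights $(2,2,2,3)$ the only candidate monomials are $s,t,x$ times constants — actually degree $3$ — i.e.\ $y$-free degree-$3$ elements, and $y$ together with the degree-$3$ parts of $x^2,\ st\ (\text{or } 2s-t^2)\cdot(\cdots)$ already exhaust them. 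I would similarly handle $s^2(s-t^2)$ by noting its Jacobian ideal is $(3s^2-2st^2-t^2s,\ -2ts^2)=(s^2,\ st^2)$ up to the $x,y$ parts, which again kills the low-degree graded piece.

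The main obstacle I anticipate is purely bookkeeping: choosing the weight normalization so that $2d_p-w_p$ is as small as possible (ideally negative or very small) so that the list of monomials to check is short, and then verifying that every one of them lies in the Jacobian ideal. Because these singularities are non-isolated, I would also need to confirm that the transversal type is $ADE$ — here $s^2t$ and $s^2(s-t^2)$ cut out, along their singular loci, transversal $A_1$ singularities of the surface $S_p$ — so that \cite[Proposition 7.7]{ell3HK} genuinely applies; but since we will prove that the ambient graded piece $R(f_p)_{2d_p-w_p}$ already vanishes, the subspace $\widetilde R(f_p)_{2d_p-w_p}$ and hence all of $H^4_p(Y)$ vanishes regardless of the detailed structure at the transversal $A_1$ points, so the $ADE$ verification can be kept to a one-line remark. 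If for some weight choice $R(f_p)_{2d_p-w_p}$ turned out to be nonzero, the fallback would be to compute $\widetilde R(f_p)_{2d_p-w_p}$ honestly by imposing vanishing at the transversal $A_1$ locus, exactly as in the $A_k$ and $D_k$ lemmas above, but I expect this not to be necessary.
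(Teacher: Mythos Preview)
Your plan has a genuine gap in the first case. The lemma sits in the $j(\pi)=1728$ subsection, so the only relevant local equations are $f_p=y^2+x^3+s^2t\,x$ and $f_p=y^2+x^3+s^2(s-t^2)\,x$; the $j=0$ versions you also write down belong to a different lemma (Lemma~\ref{lemNull}) and should be dropped. More importantly, for $f_p=y^2+x^3+s^2t\,x$ your central expectation that $R(f_p)_{2d_p-w_p}=0$ is false. With the correct weights $(w_s,w_t,w_x,w_y)=(1,2,2,3)$ (your choice $(3,6,4,6)$ does not make $s^2t\,x$ homogeneous of the same degree as $x^3$) one gets $d_p=6$, $w_p=8$, $2d_p-w_p=4$, and $R(f_p)_4$ is four-dimensional, spanned by $xt,\,s^4,\,t^2,\,s^2t$. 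So the shortcut you hoped for is unavailable, and the whole content of the argument lies precisely in the $\widetilde R$-computation you planned to skip.

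That fallback is also more delicate than you indicate. The surface $S_p\subset\Ps(1,2,2,3)$ has two singular points, $q_1=(1{:}0{:}0{:}0)$ and $q_2=(0{:}1{:}0{:}0)$. At $q_1$ one has an $A_1$ with Milnor algebra $\langle 1\rangle$, which kills $s^4$. But $q_2$ is \emph{not} an $A_1$: the local equation is $y^2+x^3+s^2x$, whose Milnor algebra is four-dimensional with basis $1,x,s,s^2$, and moreover $q_2\in\Ps'_{\sing}$, so one must pass to the $G_{q_2}$-invariants (here $G_{q_2}$ is generated by $s\mapsto -s$), obtaining $\langle 1,x,s^2\rangle$; only then does one see that the remaining classes $xt,\,t^2,\,s^2t$ all map nontrivially and $\widetilde R_4=0$. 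This stabilizer step is exactly what the paper does, and it cannot be replaced by ``imposing vanishing at the transversal $A_1$ locus''. For the second singularity $s^2(s-t^2)$ the paper instead applies the three-weights trick (Remark~\ref{rem3wts}): with weights $(4,2,6,9)$ one passes to $y=x^3+s^2(s-t^2)x$ in $\Ps(2,1,3,9)$, where $1$ lies in the Jacobian ideal; your direct approach would also work here, since $2d_p-w_p=15$ is odd while every $y$-free monomial has even weight.
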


\begin{proof}
In the first case we have a  local equation  $y^2=x^3+s^2tx$ for $(Y,p)$.
This defines a degree 6 surface $S$ in $\Ps(1,2,2,3)$. Hence $2d_p-w_p=4$. The monomials $xt,s^4,t^2,s^2t$ span $R(f_p)_{4}$. The surface $S$ has two singularities, namely at $q_1:=(1:0:0:0)$ and $q_2:=(0:1:0:0)$.

The Milnor algebra $M_{q_1}$ is generated by $1$ (which translates to $s^4$ in global coordinates).
For $q_2$, note that the Milnor algebra is generated by $1,x,s,s^2$. The group $G_{q_2}$ is generated by $s\mapsto -s$, hence $M_{q_2}^{G_{q_2}}$ is spanned by  $t^2,xt,s^2t$ and $\tilde{R}_{2d_p-w_p}=0$.

Consider now $y^2=x^3+s^2(s-t^2)x$. 
This defines a surface in $\Ps(4,2,6,9)$ and is isomorphic to $y=x^3+s^2(s-t^2)x$ in $\Ps(2,1,3,9)$. This surface has $h^2_{\prim}=0$, hence $H^4_p(Y,\C)=0$.
\end{proof}

We turn now to the final case, namely $j(\pi)=0$ and $C$ is non-reduced.

\begin{lemma}\label{lemNull} Suppose $(Y,p)$ is one of the following singularities
\begin{enumerate}
\item $y^2=x^3+t^2s$
\item $y^2=x^3+t^2(t-s^3)$
\item $y^2=x^3+t^3s$
\item $y^2=x^3+t^3(t-s^2)$
\item $y^2=x^3+t^4s$
\item $y^2=x^3+t^2s^2$
\end{enumerate}
Then $H^4_p(Y)=0$
\end{lemma}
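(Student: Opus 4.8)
The plan is to treat all six singularities uniformly by applying the Hulek–Kloosterman generalization of Dimca's method (i.e., \cite[Proposition 7.7]{ell3HK}), exactly as in the preceding lemmas. Each of the listed local equations is weighted homogeneous, so one can assign weights $w_1,\dots,w_4$ to $s,t,x,y$, read off the weighted degree $d_p$ and the weight sum $w_p$, and then compute the relevant graded pieces of the Jacobian ring $R(f_p)$. Since in every case one checks $d_p<w_p$, the group $H^4_p(Y)$ is of pure $(2,2)$-type, so it suffices to show that the corrected piece $\tilde R(f_p)_{2d_p-w_p}$ vanishes; and in several of the cases even $R(f_p)_{2d_p-w_p}$ itself will already be $0$, so no correction term is needed.

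First I would handle cases (1), (3), (5), which are the ``$t^m s$'' family with $m=2,3,4$: here $Y$ is locally $y^2=x^3+t^m s$. Assigning weights so that $s$ carries all the degree once $t$ and $x$, $y$ are normalized, one gets a surface $S_p$ in a weighted $\Ps^3$; I expect that after using the three-weights trick (Remark~\ref{rem3wts}) the variable $s$ can be eliminated, reducing to $y^2=x^3+t^m$ (times a unit), whose Jacobian ideal contains $1$ up to the three-weights reduction, or at worst contains $x^2$ and $t^{m-1}$ so that $R(f_p)_{2d_p-w_p}=0$ by a degree count. Case (6), $y^2=x^3+t^2s^2$, is similar but the transversal singularity along $\{t=0\}$ and along $\{s=0\}$ is now of type $A_1$ in each direction; I would argue as in the $s^2t$ case of the previous lemma, computing $R(f_p)_{2d_p-w_p}$ explicitly (it should be spanned by a handful of monomials like $xts, t^2s^2,\dots$) and then using the invariant Milnor algebras $M_{q_i}$ at the singular points of $S_p$ lying on the coordinate locus to see that the kernel $\tilde R(f_p)_{2d_p-w_p}$ is $0$. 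Cases (2) and (4), the non-weighted-homogeneous-looking $t^2(t-s^3)$ and $t^3(t-s^2)$, are actually weighted homogeneous after the obvious weighting ($s$ of weight $1$, $t$ of weight $3$ in case (2); $s$ of weight $1$, $t$ of weight $2$ in case (4)), and one then proceeds exactly as in the analogous quartic computations: apply the three-weights trick to reduce to $y=x^3+t^2(t-s^3)$, resp.\ $y=x^3+t^3(t-s^2)$, in a smaller weighted projective space, observe that $1$ lies in the resulting Jacobian ideal, hence $R(g_p)$ is the zero ring and $h^{1,1}(S_p)_{\prim}=0$.

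So the key steps, in order, are: (i) write down the weighted-homogeneous local equation and a valid choice of weights for each of the six singularities; (ii) compute $d_p$, $w_p$, verify $d_p<w_p$ so that purity of type $(2,2)$ holds; (iii) apply the three-weights trick wherever three of the four weights share a common factor, to reduce to a simpler surface; (iv) compute $R(f_p)_{2d_p-w_p}$ (often already $0$); (v) in the remaining cases, identify the $ADE$ singularities of $S_p$ on the singular locus of the weighted projective space, compute the relevant $G_{q_i}$-invariant Milnor algebras, and check that the correction kills the remaining classes, giving $\tilde R(f_p)_{2d_p-w_p}=0$. The main obstacle I anticipate is bookkeeping rather than conceptual: choosing the weights so that the three-weights trick actually applies and keeping track of which coordinate points of $S_p$ are singular and what their stabilizers are, since a careless weight choice can make $S_p$ fail to be quasismooth after the coordinate change; this is precisely the kind of subtlety the referee flagged in Proposition~\ref{prpSpec}, so I would be careful to verify quasismoothness of the reduced surface before invoking the formula $h^{1,1}(S_p)_{\prim}=\dim R(g_p)_{d-w_0-w_1-w_2-w_3}$.
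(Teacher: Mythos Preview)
Your overall framework is correct and matches the paper, and your treatment of cases (1), (2), (4), (6) agrees with what is done there (three-weights trick for (1), (2), (4); explicit $\tilde R$ computation for (6)).

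However, your plan for cases (3) and (5) has a genuine gap. For $y^2=x^3+t^ms$ with $m=3,4$ the three-weights trick simply does not apply: the essentially unique weight vectors are $(w_s,w_t,w_x,w_y)=(3,1,2,3)$ for $m=3$ and $(2,1,2,3)$ for $m=4$, and in both cases no three of the four weights share a nontrivial common factor (in particular you cannot ``eliminate $s$'': any common factor of $w_t,w_x,w_y$ divides $\gcd(2a,3a)=a$ and then automatically divides $w_s$ as well). Your fallback claim that $R(f_p)_{2d_p-w_p}=0$ by a degree count is also false: one gets $2d_p-w_p=3$ (resp.\ $4$) and $R(f_p)_{3}$ is spanned by $s,\,xt$ (resp.\ $R(f_p)_{4}$ by $xs,\,xt^2,\,t^2s,\,s^2$), which is visibly nonzero.

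What the paper does for (3) and (5) is exactly the step (v) you reserved for case (6): the surface $S_p$ has its unique singular point at $(1{:}0{:}0{:}0)\in\Ps'_{\sing}$, and one must compute the stabilizer $G_q$ (generated by $(x,t)\mapsto(\omega^2 x,\omega t)$ for $m=3$, by $t\mapsto -t$ for $m=4$), take $G_q$-invariants in the local Milnor algebra, and observe that the evaluation map $R(f_p)_{2d_p-w_p}\to M_q^{G_q}$ is injective, so $\tilde R(f_p)_{2d_p-w_p}=0$. So the fix is easy --- just move (3) and (5) into the same bin as (6) --- but as written your proposal would not go through for those two cases.
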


\begin{proof}
For each case we list a choice for the weights. We then either state that we may apply the the three weights trick (Remark~\ref{rem3wts}) or we give an outline on how to compute $\tilde{R}_{2d_p-w_p}$:
\begin{enumerate}
\item $2,2,2,3$: (three weights).
\item $2,6,6,9$: (three weights).
\item $3,1,2,3$: In this case we have $2d_p-w_p=3$. A basis  for $R(f_p)_3$ is $s,xt$. At $(1:0:0:0)$ we have the following stabilizer: $x\mapsto \omega^2x, t\mapsto \omega t$. The Milnor algebra has basis  $1,x,t,xt$. After taking invariants under the stabilizer we find that  $1,xt$ span $M_p^{G_p}$. Hence  $\tilde{R}_{2d_p-w_p}=0$.
\item $3,6,8,12$: (three weights).
\item $2,1,2,3$: In this case we have $2d-w=4$. A basis for $R(f_p)_4$ is $xs$, $xt^2$, $t^2s$, $s^2$. At $(1:0:0:0)$ we have  $t\to -t$ as stabilizer. The Milnor algebra is spanned by  $1,x,t,xt,t^2,xt^2$, hence the invariants under the stabilizer are (in global coordinates)   $s^2,xs,t^2s,xt^2$. Hence $\tilde{R}(f_p)_{2d_p-w_p}=0$.
\item $2,1,2,3$: A basis  for $R(f_p)_4$ is $xt^2,xs,t^4,s^2$. 
At $(1:0:0:0)$ the stabilizer is generated by $t\to -t$, the Milnor algebra is spanned  by $1,x$, hence is invariant under the stabilizer, so we can exclude $s^2,xs$.
At $(0:1:0:0)$  we have no stabilizer, the Milnor algebra is spanned by $1,x$ in local coordinates, hence $t^4,xt^2$ can be excluded.
From this it follows that $\tilde{R}_{2d_p-w_p}(f_p)=0$.
\end{enumerate}
\end{proof}

\begin{lemma} \label{lemDblTpl}
Suppose $T$ is a reduced quartic with a double and a triple point. Then either
\begin{itemize}
\item $T$ has exactly two singularities, the triple point is a $D_6$ singularity and the double point is an $A_1$ singularity,
 \item $T$ has exactly two singularities, the triple point is  a $D_5$ singularity and the double point is an $A_1$ singularity or
\item $T$ has a $D_4$ singularity an up to 3 $A_1$ singularities.
\end{itemize}
\end{lemma}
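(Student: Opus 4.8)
The plan is to analyze a reduced quartic $T$ possessing a triple point $q$ by passing to its strict transform, or more elementarily by looking directly at the tangent cone at $q$. Since $q$ is a triple point of a quartic, the projection from $q$ gives $T$ the structure of a degree $1$ cover of $\mathbf{P}^1$ (a quartic through a triple point meets a general line through $q$ in one further point), so $T$ is rational and, more importantly, the local analysis at $q$ is governed by the degree $3$ tangent cone together with the single remaining branch data. First I would classify the possible triple points on a \emph{reduced} quartic: the tangent cone is a cubic form in two variables, so it is either three distinct lines ($D_4$), a double line plus a simple line, or a triple line. A triple line would force the quartic to be non-reduced or to be four lines through $q$ (the cone case, excluded by hypothesis in this paper's standing assumptions, though here $T$ is just ``a reduced quartic'' — I would note that a reduced quartic cannot have a triple line in its tangent cone at $q$ unless $T$ is that line with multiplicity, contradicting reducedness after checking the degree $4$ term). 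So the tangent cone at $q$ is either three distinct lines, giving a $D_4$, or a double line plus a transverse line; in the latter case the singularity type is $D_5$ or $D_6$ depending on whether the branch tangent to the double line has contact of order $2$ or $3$ with it, and contact order $\geq 4$ is impossible on a quartic. This yields exactly the three types $D_4, D_5, D_6$ for the triple point.

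Next I would bound the remaining singularities using the genus/$\delta$-invariant count. A reduced plane quartic has arithmetic genus $3$, so $\sum_p \delta_p \leq 3$, with equality iff $T$ is rational (which it is, being swept by lines through $q$). Since $\delta(D_4)=2$, $\delta(D_5)=2$, $\delta(D_6)=3$, we get: if $q$ is a $D_6$ then $\sum_{p\neq q}\delta_p \le 0$, forcing all other points to be smooth — but the statement asserts there is an additional $A_1$; this extra node must come from the fact that $T$ is assumed to have ``a double point'' distinct from the triple point, so actually the correct reading is $\delta_q \le 3$ and the double point contributes at least $1$, forcing $q$ to be $D_5$ or $D_6$ is \emph{not} yet forced — I would instead argue that if $q$ is $D_6$ the double point is the unique $A_1$ and $\delta$ is already $3$, i.e.\ the first bullet; if $q$ is $D_5$ then $\sum_{p\neq q}\delta_p\le 1$, so there is exactly one further node (an $A_1$), the second bullet; and if $q$ is $D_4$ then $\sum_{p\neq q}\delta_p \le 1$, giving at most one node — but the statement allows up to three $A_1$'s. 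I would reconcile this by recalling that $\delta(D_4) = 2$ and $p_a = 3$ leaves budget $1$, so actually only one extra node; the ``up to $3$'' must account for the possibility that $T$ is a union of components (e.g.\ $T$ = conic $+$ two lines, where the nodes are forced by the configuration) — here the $\delta$-bound still applies to the reduced curve, so I would recount using that a reducible quartic with a $D_4$ at $q$ can have its components meeting at up to three additional points. The cleanest route: enumerate the component types of a reducible reduced quartic through a triple point (line $+$ cubic through a double point of the cubic; two conics tangent or through a common point; conic $+$ two lines; four lines not all concurrent) and read off the $A_1$'s directly, while the irreducible case is handled purely by the $\delta$-bound.

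I would then assemble: in the irreducible case the triple point is $D_4, D_5,$ or $D_6$ and $\sum\delta \le 3$ pins down the remaining nodes exactly as in the three bullets; in the reducible case I go through the short list of component configurations and check each produces one of the three listed outcomes. \textbf{The main obstacle} I anticipate is the bookkeeping in the $D_4$, reducible case: getting the count of ``up to $3$ $A_1$'s'' to match exactly, which requires care about whether extra intersection points of the components can merge with $q$ or with each other (producing worse singularities that would then fall under the $D_5/D_6$ cases or be excluded by reducedness). A secondary subtlety is justifying that contact order $\geq 4$ between a branch and a line cannot occur on a quartic without forcing that line to be a component — this is a standard Bézout argument ($4 = \deg T < $ intersection multiplicity) that I would state in one line.
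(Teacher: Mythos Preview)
Your approach contains several concrete errors that would prevent the argument from going through.

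First, the $\delta$-invariants you quote are incorrect. Using Milnor's formula $\mu = 2\delta - r + 1$, with $r$ the number of local branches, one finds $\delta(D_4)=3$, $\delta(D_5)=3$, $\delta(D_6)=4$ (not $2,2,3$; the formula $\delta=\lfloor\mu/2\rfloor$ is valid for $A_n$ but not for $D_n$). With the correct values the inequality $\sum_p \delta_p \le p_a(T)=3$ is already violated in the $D_5$ and $D_6$ cases of the lemma, since each comes with an additional $A_1$. This is not a contradiction: the bound $\sum_p\delta_p \le 3$ holds only for \emph{irreducible} plane quartics, and for a reduced curve with $r$ components of geometric genera $g_i$ one has instead $\sum_p\delta_p = 3 + (r-1) - \sum_i g_i$.

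Second, your ``irreducible case'' is in fact vacuous, and realizing this is the key step you are missing. The line $\ell$ through the double point $q_1$ and the triple point $q_2$ satisfies $(T\cdot\ell)_{q_1}+(T\cdot\ell)_{q_2}\ge 2+3=5>4=\deg T$, so by B\'ezout $\ell$ is a component of $T$. This one observation is the heart of the paper's proof and replaces your entire tangent-cone and genus analysis.

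Third, your exclusion of a triple-line tangent cone (``a reduced quartic cannot have a triple line in its tangent cone'') is false as stated: the quartic $y^3z + x^4 = 0$ is reduced and irreducible, with tangent cone $y^3$ at $(0{:}0{:}1)$ (an $E_6$ singularity). What rules out an $E_6$ triple point \emph{in this lemma} is precisely the presence of the second singular point $q_1$, again via the B\'ezout step: once $\ell\subset T$, the residual cubic $K$ has only a double point at $q_2$, so the tangent cone of $T$ at $q_2$ is $\ell$ times a quadratic form and cannot be a perfect cube.

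The paper's route is then short and avoids all of the above bookkeeping: write $T=\ell\cup K$ with $K$ a reduced cubic; then $q_1$ is a smooth point of $K$ and $(K\cdot\ell)_{q_1}=1$, so $(T,q_1)$ is an $A_1$; and $q_2$ is a double point of $K$, necessarily of type $A_1$, $A_2$, or $A_3$, producing a $D_4$, $D_5$, or $D_6$ on $T$ respectively. The remaining $A_1$'s are read off from the singularities of $K$ away from $q_2$ in each case.
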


\begin{proof}
Suppose $q_1$ is a double point and $q_2$ a the triple point. Let $\ell$ be the line through $q_1$ and $q_2$. Since $(T.\ell)_{q_1} \geq 2$ and $(T.\ell)_{q_2} \geq 3$ it follows that $\ell$ is a component of $T$. Let $K$ be the residual cubic. 
Then $q_1$ is a smooth point of $K$ and $q_2$ is double point of $K$. Since $T$ is reduced, we have that $\ell$ is not a component of $K$. From this it follows that $(K.\ell)_{q_i}=i$ for $i=1,2$. In particular, at $q_1$ we have an $A_1$ singularity. Hence all double points of $T$ are $A_1$ singularities.

Note that if $K$ has an $A_k$ singularity at $q_1$ then $T$ has $D_{3+k}$ singularity. Since $K$ is a cubic we have that $k\leq 3$.

If $K$ has an $A_3$ singularity at $q_1$ then $K$ is a conic $Q$ together with a line tangent to $Q$ at $q_1$. Hence $T$ has a $D_6$ and an $A_1$ singularity and no other singularities.

If $K$ has an $A_2$ singularity at $q_1$ then $K$ is an irreducible cubic and smooth outside $q_1$. Hence $T$ has a $D_5$ and an $A_1$ singularities.

If $K$ has an $A_1$ singularity at $q_1$ then $K$ has at most 2 other $A_1$ singularities, hence $T$ has a $D_4$ singularity and at most three $A_1$ singularities.
\end{proof}

\begin{lemma} Suppose $C$ is a double line $\ell$ together with a reduced quartic $T$. Suppose that $Y$ has at least two singularities such that $H^4_p(Y)\neq 0$ then one of the following occurs

\begin{itemize}
\item $C$ has at least two cusps, none of them along $\ell$.
\item $\ell$ is a bitangent of $T$, and $C$ might be  smooth or has double points along $C\cap \ell$.
\item $C$ has an $E_6$ singularity, but not along $C\cap \ell$, and there is a point $p\in C\cap \ell$ such that $(C\cdot \ell)_q\in \{2,4\}$.
\item $C$ has an $A_2$ or $A_5$ singularity, $C$ is smooth along $C\cap \ell$ and there is a point $q\in C\cap \ell$ such that $(C\cdot \ell)_q\in \{2,4\}$
\item $C$ has an $A_2$ or $A_5$ singularity not along $C\cap \ell$ and $C$ has a double point along $C\cap \ell$.
\end{itemize}
\end{lemma}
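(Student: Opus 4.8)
The plan is to reduce the statement to a finite classification of the local analytic types of the non-reduced plane curve $C = 2\ell + T$ at the points of $\cQ$, to record for each type whether it forces $H^4_p(Y)\neq 0$, and then to determine which pairs of such ``heavy'' types can occur on one curve. Write $R = L^2 G$ with $\ell=\{L=0\}$, $T=\{G=0\}$, $G$ squarefree of degree $4$; since $C$ genuinely carries a double line I may assume $\ell\not\subset T$ (if $\ell\subset T$ then $C$ is a triple line plus a cubic, a case treated separately). Every singular point $p$ of $Y$ lies over some $q\in\cQ=\Sing(\ell\cup T)$, and $H^4_p(Y)$ is determined by the germ $(2\ell+T,q)$; I shall also use the standard bound on $\delta$-invariants of a plane curve of fixed degree (sharpest for irreducible $T$, where $\sum_q \delta_q\le 3$) together with the embedding $\bigoplus M_i \hookrightarrow \tilde{E}_7$ to limit the singularities $T$ can carry.

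\textbf{Heavy local types.} For $q\notin\ell$ we have $(C,q)=(T,q)$, which by Section~\ref{secSing} is either the quadruple point where $T$ is four concurrent lines --- in which case the three weights trick (Remark~\ref{rem3wts}) applied to $y^2=x^3+(\text{binary quartic})$ gives $H^4_p(Y)=0$ there, and $H^4_p(Y)=0$ at the transversal points of $\ell\cap T$ as well, so the lemma is vacuous --- or one of $A_1,\dots,A_7,D_4,\dots,D_7,E_6,E_7$; by Lemma~\ref{lemj0Ak} and the $D_k$- and $E_k$-lemmas of Section~\ref{secCalc}, $q$ is heavy exactly for $A_2$, $A_5$, $E_6$, and the $\delta$-bound shows an irreducible quartic with an $A_5$ or $E_6$ has no other singular point. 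For $q\in\ell$ set $m=(\ell\cdot T)_q$; if $T$ is smooth at $q$ then after an analytic change of coordinates $(C,q)$ is $t^2(t-s^m)$, and Lemma~\ref{lemNull}(1),(2) kills $m=1,3$ while the weighted-homogeneous germs $m=2,4$ are computed, by the Hulek--Kloosterman method, to be heavy. If $q$ is a singular point of $T$ lying on $\ell$ (so $m\ge 2$), then $(C,q)$ is $L^2$ times a local equation of $(T,q)$, and one runs through the finite list of configurations --- a line through a node, a cusp, a tacnode, an $A_4$, a $D_4$, \dots\ of $T$ --- computing $H^4_p(Y)$ in each case by the methods of Section~\ref{secCalc} (Lemma~\ref{lemNull}, Hulek--Kloosterman, or Brieskorn's method for the remaining isolated germs).

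\textbf{Combining.} Assume now $Y$ has two heavy singularities and sort them by their position relative to $\ell$. If both lie off $\ell$, the $\delta$- and lattice bounds leave only the option of two ordinary cusps, which is conclusion~(1). If both lie on $\ell$, each has $m\in\{2,4\}$, and since $m=4$ uses up all of $\ell\cap T$, both have $m=2$; hence $\ell$ is a bitangent of $T$, giving conclusion~(2). If one lies off $\ell$ and one on $\ell$, the off-$\ell$ point is $A_2$, $A_5$ or $E_6$: separating $E_6$ from $A_2/A_5$, and within the latter separating the on-$\ell$ point being a tangency to a smooth branch of $T$ from its being a double point of $T$ met by $\ell$, one lands in conclusions~(3), (4), (5) respectively, with Lemma~\ref{lemDblTpl} used to control the geometry in the last case.

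The delicate part, which I expect to be the main obstacle, is the local analysis at a point where $\ell$ passes through a singular point of $T$: there $2\ell+T$ has three or more branches (one doubled), several of the resulting germs are neither isolated-weighted-homogeneous nor already tabulated in Section~\ref{secCalc} --- possibly among the six types the paper leaves unresolved --- so they must be computed directly, via Brieskorn's method or the computer, or shown not to arise under the hypotheses; and then the resulting bookkeeping must be matched to the exact five-item conclusion, including pinning down precisely what ``$C$ has a double point along $C\cap\ell$'' is meant to capture.
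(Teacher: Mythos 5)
Your overall strategy --- sort the points of $\cQ$ by their position relative to $\ell$, decide germ by germ which local types force $H^4_p(Y)\neq 0$, and then combine using bounds on the singularities of a quartic --- is the same as the paper's. The genuine gap is exactly the part you flag yourself: the evaluation of $H^4_p(Y)$ at points where $\ell$ passes through a singular point of $T$. You are right that these germs include the six types the paper cannot compute, but you have missed that the lemma never requires them to be computed. The statement is a one-way implication, and its conclusions (2) and (5) merely record that $T$ has a double point on $\ell$; they do not assert that the corresponding germ is heavy. Consequently the only local inputs needed are negative ones: a transversal smooth intersection ($t^2s$) and a smooth contact of order $3$ ($t^2(t-s^3)$) are not heavy by Lemma~\ref{lemNull}, and an off-$\ell$ point is heavy only if it is an $A_2$, $A_5$ or $E_6$ of $T$. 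If a heavy point sits at a singular point of $T$ on $\ell$, Lemma~\ref{lemDblTpl} (together with the uniqueness of a triple point on a quartic) forces it to be a double point of $T$ whenever a heavy $A_2$, $A_5$ or $E_6$ exists elsewhere, which is conclusion (5); and if both heavy points lie on $\ell$ the count $\sum_q(T\cdot\ell)_q=4$ with each heavy point contributing at least $2$ gives conclusion (2). Restructured this way, your argument closes without any computation at the problematic germs.

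A second, quieter gap is the step ``if both heavy points lie off $\ell$, only two ordinary cusps are possible.'' You state the $\delta$-bound $\sum_q\delta_q\le 3$ only for irreducible $T$, but $T$ is merely assumed reduced, and for reducible $T$ the combination $A_2+A_5$ is excluded neither by the $\delta$-bound nor by the embedding of the Milnor lattices into $\tilde{E}_7$ (it has $\mu=7\le 8$; indeed $A_5\oplus A_2$ is a sub-root system of $E_7$). A cuspidal cubic together with its flex line is a reduced quartic carrying exactly one $A_2$ and one $A_5$, both heavy by Lemma~\ref{lemj0Ak}. The paper's own proof dismisses $A_5+A_2$ without argument at the corresponding point, so your proposal inherits rather than introduces this problem, but as written neither argument covers that configuration: it must either be genuinely excluded or added to the list of conclusions.
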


\begin{proof}
Suppose first that $T$ is smooth outside $T\cap \ell$. Since we have at least two singularities that are not rationally smooth, and the singularity $y^2=x^3+t^2s$ is rationally smooth (Lemma~\ref{lemNull}), it follows that $(T\cdot \ell)\geq 2$ for at least 2 points in the intersection. Hence $\ell$ is a bitangent.

Suppose that there is a singularity $(T,q'), q'\not \in \ell$ such that $H^4_p(Y)\neq 0$. Then $(T,q')$ is a $A_2, A_5$ or $E_6$ singularity. Let $q\in T\cap \ell$. If $T$ is smooth at $q$ it follows from Lemma~\ref{lemNull} that $(T\cdot \ell)_q\in \{2,4\}$.

If $(T,q')$ is an $E_6$ singularity then it follows from Lemma~\ref{lemDblTpl} that $T$ has no double points. Since a reduced quartic has at most one triple point, this implies that $T$ is smooth outside $q'$. Hence the second singularity such that $H^4_p(Y)\neq 0$, comes from a point in $q\in T\cap \ell$. From Lemma~\ref{lemNull} it follows that   $(T\cdot \ell)_{q} \in \{2,4\}$.

If $(T,q')$ is an $A_2$ or $A_5$ singularity then $(T,q)$ might be smooth and the intersection number $(T\cdot \ell)_q$ is 2 or 4, or $(T,q)$ is an $A_k$ singularity.

Suppose none of the intersections points of $T$ and $\ell$ yields a non-trivial $H^4_p(Y)$. Then $T$ has at least two singularities with types  $A_2,A_5,E_6$. Since the combinations $2E_6$, $2A_5$, $E_6+A_2$ and $A_5+A_2$ are not possible, it follows that $T$ has at least two $A_2$ singularities.
\end{proof}

\begin{lemma} Suppose $Q$ is a quartic with an $A_k$ singularity at $q$ and $\ell$ is a line through $p$, not contained in $Q$. Then $(k,(Q\cdot \ell))$ is one of the following
\begin{itemize}
 \item $(k,2)$, $1 \leq k\leq 7$.
\item $(k,3)$, $1 \leq k \leq 2$.
\item $(k,4)$, $1 \leq k \leq 7$, $k\neq 2$.
\end{itemize}

\end{lemma}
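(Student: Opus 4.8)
The plan is to bound the local intersection number $(Q\cdot\ell)_q$ at $q$ from both sides, and then, for the two borderline values $3$ and $4$, read off the restriction on $k$ from the branch structure of the $A_k$ singularity. (In the statement $(Q\cdot\ell)$ means this local multiplicity at $q$; the total intersection number is of course always $4$ by B\'ezout, since $\ell\not\subseteq Q$.)

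First the elementary bounds. An $A_k$ singularity has multiplicity $2$, so $(Q\cdot\ell)_q\geq \operatorname{mult}_q(Q)=2$ for every line through $q$, while $(Q\cdot\ell)_q\leq Q\cdot\ell=4$. Hence $(Q\cdot\ell)_q\in\{2,3,4\}$. Together with the bound $1\leq k\leq 7$ from the classification of singular quartics recalled in Section~\ref{secSing}, this already accounts for all pairs $(k,2)$: a line through $q$ not lying in the tangent cone of $(Q,q)$ meets $Q$ at $q$ with multiplicity exactly $\operatorname{mult}_q(Q)=2$.

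For the two remaining values I would argue via the branches. Since $\operatorname{mult}_q(Q)=2$, the germ $(Q,q)$ has at most two branches, and if it has two then both are smooth; write $(Q\cdot\ell)_q=\sum_i (B_i\cdot\ell)_q$, recalling that $(B_i\cdot\ell)_q=1$ unless $\ell$ is tangent to $B_i$. If $(Q\cdot\ell)_q=3$: in the two-branch case the summands must be $1$ and $2$, so $\ell$ is tangent to exactly one of the two smooth branches, forcing $B_1,B_2$ to have distinct tangent lines, hence the tangent cone is a pair of distinct lines, i.e.\ $k=1$; in the unibranch case the unique branch has multiplicity $2$, hence a single Puiseux pair $(2,m)$ with $m$ odd, and $(Q\cdot\ell)_q=3$ forces $m=3$, i.e.\ $(Q,q)$ is an $A_2$. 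So $(Q\cdot\ell)_q=3$ implies $k\in\{1,2\}$, the second family. If $(Q\cdot\ell)_q=4$: I only need to exclude $k=2$, and this is immediate, since an $A_2$ is unibranch with branch of Puiseux pair $(2,3)$, so $(Q\cdot\ell)_q$ is $2$ when $\ell$ avoids the cuspidal tangent and $3$ when it is the cuspidal tangent, never $4$. Combined with $1\leq k\leq 7$ this is the third family.

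It then remains only to observe that each listed pair is attained: lines in general position give $2$; for $(1,3)$, resp.\ $(2,3)$, take a line through a suitably chosen branch of a nodal quartic, resp.\ the cuspidal tangent of a cuspidal quartic; and for odd $k\geq 3$ one gets $(k,4)$ from a union of two conics tangent to a common line $\ell$ with contact $(k+1)/2$ at $q$, for which $(Q\cdot\ell)_q=2+2=4$. I expect the only genuine subtlety to lie in this realizability step, combined with the fact that the a priori bound $1\leq k\leq 7$ is not sharp --- for instance $A_4$ and $A_6$ do not occur on quartics at all (their cuspidal tangent would have contact $>4$), so the stated list is merely an over-generous upper bound on the pairs that can actually occur, which is all that ``is one of the following'' requires. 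The necessity half, by contrast, uses nothing beyond $\operatorname{mult}_q(Q)=2$, B\'ezout and the tangency count, and is clean.
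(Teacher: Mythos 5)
Your necessity argument is correct and is essentially the paper's argument in different clothing: the paper writes the local equation as $t^2+a_{30}s^3+a_{21}s^2t+\dots+a_{04}t^4$ (truncated at degree $4$ because $Q$ is a quartic, with $a_{30}$ or $a_{40}$ nonzero because $\ell\not\subset Q$) and restricts to the tangent line to get $a_{30}s^3+a_{40}s^4$, which is exactly your dichotomy ``contact $3$ iff the cubic term survives iff $A_1$ or $A_2$; contact $4$ forces the cubic term to vanish, hence $k\neq 2$''; your version routes this through the branch decomposition and Puiseux pairs and uses B\'ezout where the paper uses the degree-$4$ truncation. One warning about your closing aside, which is fortunately not needed for the lemma (as you yourself note, the statement only asserts containment in the list): the claim that $A_4$ and $A_6$ do not occur on quartics is false. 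For an $A_{2m}$ branch $(u^2,\;c_4u^4+c_5u^5+\cdots)$ the tangent contact is $4$ as soon as $c_4\neq 0$, not $2m+1$; concretely $t^2z^2-s^2tz+cs^3t+\tfrac14 s^4$ is a quartic with an $A_4$ at the origin whose cuspidal tangent $t=0$ has contact exactly $4$, and quartics with $A_6$ exist likewise (consistent with the $\tilde E_7$ embedding argument in Section~\ref{secSing}). Relatedly, your realizability recipe ``two conics tangent to $\ell$ with contact $(k+1)/2$'' cannot be taken literally for $k=5,7$, since a line meets a smooth conic with multiplicity at most $2$; the correct picture is two conics with mutual contact $(k+1)/2$ at $q$, whose common tangent $\ell$ then has contact $2+2=4$ with the union. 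Neither slip affects the proof of the stated lemma.
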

\begin{proof} Since $Q$ is a quartic, we have $1\leq k \leq 7$. For a general line $\ell$  we have $(Q\cdot \ell)=2$. This yields the case $(k,2)$.

Suppose now $k=2$ and $\ell$ is given by $t=0$. The quartics  locally given by $st+s^3$ or $st+s^4$ yield the cases $(1,3)$ and $(1,4)$. 

Suppose now that we have $k>1$ then we have a local equation of the form
\[ t^2+a_{30}s^3+a_{21}s^2t+a_{12}st^2+a_{03}t^3+a_{40}s^4+a_{31}s^3t+a_{22}s^2t^2+a_{03}st^3+a_{04}t^4.\]
Since $\ell$ is not a component of $Q$, we have that either $a_{30}$ or $a_{40}$ is nonzero.

If $a_{30}\neq 0$ then we have an $A_2$ singularity and $C\cdot \ell=3$.

If $a_{30}=0$ then $k\geq 3$ and $C\cdot \ell=4$. 
\end{proof}

\begin{remark} A straightforward calculation shows that the contact equivalence class of $t^2f(t,s)$, where $f$ is a singularity on a quartic curve, is determined by the type of singularity of $f$ and the intersection number of $t=0$ with $f(t,s)$.
\end{remark}

\begin{lemma} Suppose $C$ is a triple line $\ell$ together with a reduced cubic $K$.
Suppose that $Y$ has at least two singularities such that $H^4_p(Y)\neq 0$ then $K$ is a cuspidal cubic, and $\ell$ is a flex line at a smooth point of $K$.
\end{lemma}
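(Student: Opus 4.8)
The plan is to run through all the singularities of $Y$ and feed them into the local computations of Section~\ref{secCalc}. Write $C=3\ell+K$, where $K$ is a reduced cubic; since otherwise $C$ would be a quadruple line together with a conic, $\ell$ is not a component of $K$, so $\cQ=\Sing K\cup(\ell\cap K)$. Apart from isolated points lying over $\Sing K\setminus\ell$, the singular locus of $Y$ is the line over $\ell$; there the germ is $y^2=x^3+(\text{unit})\cdot t^3$, which does not depend on the coordinate along $\ell$, so the transversal type is the constant surface singularity $D_4$ except at $\ell\cap K$, where it degenerates. Hence the points $p$ at which $H^4_p(Y)$ can be nonzero all lie over $\Sing K\cup(\ell\cap K)$, and it suffices to decide for each such point whether $H^4_p(Y)\neq0$.

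I would first treat $q\in\Sing K\setminus\ell$: there $(Y,p)$ is an isolated singularity $y^2=x^3+g(s,t)$ with $g=0$ the germ of $K$, and a reduced plane cubic has only $A_1,A_2,A_3$ or $D_4$ singularities. By Lemma~\ref{lemj0Ak} and the lemma computing $H^4_p(Y)$ at a $D_k$-point of a sextic, $H^4_p(Y)\neq0$ forces an $A_2$, i.e.\ an ordinary cusp, in which case $H^4_p(Y)\cong\C(-2)^2$. An irreducible cubic with a cusp is smooth elsewhere, and a reducible cubic has no cusp; so such a $q$ exists at most once, and only when $K$ is a cuspidal cubic with cusp $q$.

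Next I would treat $q\in\ell\cap K$, where $(Y,p)$ has local equation $y^2=x^3+t^3g(s,t)$ with $\{t=0\}=\ell$. If $K$ is smooth at $q$, then $(\ell\cdot K)_q\in\{1,2,3\}$ (three being the maximum since $\ell\not\subset K$) and the germ is, respectively, $y^2=x^3+t^3s$, $y^2=x^3+t^3(t-s^2)$, or $y^2=x^3+t^3(t-s^3)$. The first two have $H^4_p(Y)=0$ by Lemma~\ref{lemNull}. The third occurs exactly when $\ell$ is a flex line of $K$ at $q$, and then $\ell\cap K=\{q\}$; this germ is quasi-homogeneous of degree $12$ for the weights $4,6,1,3$ on $x,y,s,t$, with transversal type $D_4$ along $\ell$, so the Hulek--Kloosterman method applies: here $d_p-w_p=-2<0$, $2d_p-w_p=10$, one checks $\dim R(f_p)_{10}=6$ and that the unique singular point of $S_p$ is a $D_4$ point lying off $\Ps'_{\sing}$, whence $\dim\tilde R(f_p)_{10}=2$ and $h^4_p(Y)=2$. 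If instead $K$ is singular at $q$, then $q$ is an $A_1,A_2$ or $A_3$ point of $K$ placed on $\ell$ in one of finitely many ways (a triple point of $K$ on $\ell$ is excluded, as $C$ would then be a union of lines through one point); in each case $y^2=x^3+t^3g$ is again quasi-homogeneous, and a routine computation --- via the three weights trick of Remark~\ref{rem3wts}, which frequently renders the defining equation linear in one variable, or via a direct computation of $\tilde R(f_p)$ with the $\mu_3$-stabilizer at the relevant point of $\Ps'_{\sing}$ --- gives $H^4_p(Y)=0$. In particular, if $K$ is a cuspidal cubic whose cusp lies on $\ell$, then $Y$ has no singularity with $H^4_p(Y)\neq0$.

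Assembling this: if $K$ is not a cuspidal cubic, then no point over $\Sing K\setminus\ell$ contributes, and among the points over $\ell\cap K$ only a point $q$ with $(\ell\cdot K)_q=3$ can contribute, of which there is at most one; hence $Y$ has at most one singularity with $H^4_p(Y)\neq0$. So if there are two, $K$ must be a cuspidal cubic; its cusp cannot lie on $\ell$, so the cusp supplies one contribution and the other comes from a point $q\in\ell\cap K$ at which --- $K$ being smooth there --- the discussion above forces $(\ell\cdot K)_q=3$, i.e.\ $\ell$ is a flex line of $K$ at a smooth point. The step I expect to be most delicate is the short list of local computations for $q\in\Sing K\cap\ell$ together with the transversal-$D_4$ computation giving $h^4_p(Y)=2$ in the flex case, where care is needed with the stabilizers entering the Hulek--Kloosterman formula; the rest is bookkeeping on top of Sections~\ref{secSing} and~\ref{secCalc}.
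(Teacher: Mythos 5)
Your argument reaches the right conclusion and uses the same basic inputs as the paper (the classification of singularities of a reduced cubic, Lemma~\ref{lemNull} for the transversal and simple-tangent intersections of $\ell$ with $K$, and the fact that a cubic carries at most one cusp), but it routes part of the case analysis through local computations that the paper deliberately avoids. Specifically, you assert that when $K$ is singular at a point $q\in\ell$ the germs $y^2=x^3+t^3g$ are quasi-homogeneous and that a ``routine computation'' gives $H^4_p(Y)=0$; none of these cases is treated in Section~\ref{secCalc}, and they are exactly the kind of non-isolated singularities the paper is cautious about elsewhere. The paper sidesteps them entirely by counting: since $(\ell\cdot K)=3$ and $\ell\not\subset K$, at most one point of $\ell\cap K$ can be anything other than a smooth transversal or simple tangent point, so by Lemma~\ref{lemNull} at most one point on $\ell$ contributes --- whatever the local cohomology at a singular point of $K$ on $\ell$ turns out to be --- and at most one point off $\ell$ contributes, since such a point must be a cusp of $K$. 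Two contributions therefore force exactly one of each; the off-$\ell$ one makes $K$ an (irreducible) cuspidal cubic, hence smooth along $\ell$, and the on-$\ell$ one then sits at a smooth point of $K$ and must be a flex by Lemma~\ref{lemNull}. Your write-up would be airtight if you replaced the unverified vanishing claims by this Bezout count; as it stands those claims are load-bearing in two places (the step ``only a point with $(\ell\cdot K)_q=3$ can contribute'' and the step ``its cusp cannot lie on $\ell$'') but are only sketched. Your extra computation that $h^4_p(Y)=2$ in the flex case is correct and matches the paper's separate lemma on $y^2=x^3+t^3(t+s^3)$, though it is not needed for this implication, only for the later existence statement.
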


\begin{proof}
Suppose $K$ has two points $q_1,q_2$ not on $\ell$ yielding non-zero $H^4_p(Y)$, then $K$ has an $A_2$ singularity at $q_1,q_2$. Since a cubic has at most one cusp, this is not possible.

Suppose there is a point $q\in K \cap \ell$ yielding a non-trivial $H^4_p(Y)$. Then from Lemma~\ref{lemNull} it follows that $K$ is singular at $q$, or $q$ is a flex point and $\ell$ is a flex line. This implies that there is at least one point $q'$ not on $\ell$ yielding non trivial local cohomology.

Since a cubic has only $A_1,A_2$ or $D_4$ singularities, and $A_1,D_4$ singularities yield rationally smooth points on $Y$, it follows that $(K,q)$ is an $A_2$ singularity.

Since cuspidal cubics have exactly one singularity,  it follows that $(K,q)$ is smooth, hence $\ell$ the flex line of $K$ at $q$.
\end{proof}


\begin{lemma} Suppose $C$ is a quadruple line $\ell$ together with a reduced conic $T$. Then  $Y$ has at most one  singular point $p$ with $H^4_p(Y)\neq 0$.
\end{lemma}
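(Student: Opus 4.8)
The plan is to pin down the configuration $C=4\ell+T$ up to the finitely many possibilities and then inspect, one at a time, the singular points of $Y$ that can contribute. Write $R=L^{4}Q$ with $L$ a linear form ($\ell=V(L)$) and $Q$ a reduced quadratic form ($T=V(Q)$). Every point $p$ of $Y$ with $H^{4}_{p}(Y)\neq0$ lies over a point $q\in\Sing(C_{\red})=\Sing(\ell\cup T)$: at the remaining points of $\Sing(Y)$ the threefold has constant transverse type, namely the surface singularity $E_{6}$ along $\ell$, and such a stratum contributes nothing. Using the non-cone hypothesis (which forbids $\ell,\ell_{1},\ell_{2}$ from being concurrent when $T=\ell_{1}\cup\ell_{2}$), the options for $q$ are: (i) $\ell$ meets $T$ transversally at $q$; (ii) $T=\ell_{1}\cup\ell_{2}$ and $q=\ell_{1}\cap\ell_{2}$; (iii) $T$ is a smooth conic tangent to $\ell$ at the single point $q$.

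In case (i), transversality lets me take $(s,t)=(Q,L)$ as local coordinates, so that $R=t^{4}s$ on the nose and $(Y,p)$ has local equation $y^{2}=x^{3}+t^{4}s$; the case $y^{2}=x^{3}+t^{4}s$ of Lemma~\ref{lemNull} then gives $H^{4}_{p}(Y)=0$. In case (ii), $\ell$ avoids $q$, so $L$ is a local unit, $(C,q)$ is an ordinary node, i.e.\ an $A_{1}$ point of $C$, and Lemma~\ref{lemj0Ak} with $k=1$ (note $1\not\equiv2\bmod3$) gives $H^{4}_{p}(Y)=0$. In case (iii), $\Sing(\ell\cup T)=\{q\}$ is a single point, so only one point of $Y$, namely the point of $\{x=y=0\}$ lying over $q$, can carry nonvanishing $H^{4}_{p}(Y)$. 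Assembling: if $T$ is a smooth conic then $\ell$ meets $T$ either in two transversal points (both of type (i), each contributing $0$) or in one tangency point (type (iii), a single candidate); if $T=\ell_{1}\cup\ell_{2}$ then $C_{\red}$ is a genuine triangle and $\Sing(C_{\red})$ consists of its three vertices, the two on $\ell$ being of type (i) and the third of type (ii). In every case $Y$ has at most one singular point $p$ with $H^{4}_{p}(Y)\neq0$.

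The step that needs genuine care is the reduction at the outset: that a point of $Y$ over a smooth point of $C_{\red}$ — in particular a generic point of the singular line $\ell\subset Y$, where the transverse type is the constant surface singularity $E_{6}$ — has $H^{4}_{p}(Y)=0$. This is the principle implicit throughout Section~\ref{secCalc}: a one-dimensional singular stratum of constant $ADE$ transverse type carries no local cohomology in the degree entering the Mordell-Weil formula. Everything else is bookkeeping once the three local germs $y^{2}=x^{3}+t^{4}s$, $y^{2}=x^{3}+ts$ and $y^{2}=x^{3}+t^{4}(t-cs^{2}+\cdots)$ (with $c\neq0$) are identified. One can in fact show that the germ in case (iii) also satisfies $H^{4}_{p}(Y)=0$: it is semi-quasi-homogeneous of degree $30$ for the weights $(3,6,10,15)$ on $(s,t,x,y)$, and the three-weights trick of Remark~\ref{rem3wts} (replacing $x$ by its cube) produces an equation in $\Ps(10,1,2,5)$ with $1$ in its Jacobian ideal, so that $h^{2,2}(H^{4}_{p}(Y))=0$, while $d_{p}-w_{p}<0$ kills the remaining Hodge pieces; thus $Y$ in fact has no singular point with nonvanishing $H^{4}_{p}(Y)$. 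We retain the weaker phrasing of the lemma, which is all that is needed later.
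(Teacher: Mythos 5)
Your argument is correct and follows essentially the same route as the paper: reduce to the singular points of $C_{\red}$, observe that a node of $T$ off $\ell$ and a transversal intersection of $\ell$ with $T$ (local germ $y^2=x^3+t^4s$) contribute nothing by Lemma~\ref{lemNull}, so only a single tangency point can remain. Your closing claim that the tangency germ also vanishes is a plausible bonus beyond what the lemma asserts, though note that germ is only semi-weighted homogeneous and non-isolated, so it sits slightly outside the hypotheses under which the paper applies the three-weights trick.
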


\begin{proof}Let $q_1$ and $q_2$ be points yielding non-trivial local cohomology. Since $T$ is a conic it is either smooth or has an $A_1$ singularity. Since an isolated $A_1$ singularity yields a rational smooth singularity on $Y$, we have that $q_1,q_2\in \ell$.
In particular $\ell$ is not a tangent of $T$. From Lemma~\ref{lemNull} it follows that $H^4_p(Y)=0$ in this case.
\end{proof}

\begin{lemma} Suppose $C$ consists of two double lines $\ell_1,\ell_2$  together with a reduced conic $T$. Suppose that $Y$ has at least two singularities such that $H^4_p(Y)\neq 0$. Then $\ell_1$ and $\ell_2$ are tangent to $T$.
\end{lemma}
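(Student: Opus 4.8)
The plan is to work through the singular points $q$ of $C_{\red}=\ell_1\cup\ell_2\cup T$ one by one, record the local analytic type of the threefold germ $(Y,p)$ over $q$, compute $H^4_p(Y)$ in each case, and then read off that a nonvanishing $H^4_p(Y)$ can only come from a point where one of the double lines is tangent to $T$. At the outset I would use that, since $Y$ is not obtained by the cone construction, $C_{\red}$ is not a union of concurrent lines; in particular, when $T=m_1\cup m_2$ is reducible the four lines $\ell_1,\ell_2,m_1,m_2$ are not all concurrent. Over $q\in\cQ$ the germ $(Y,p)$ is $y^2=x^3+g$, with $g$ a local equation of $C$ at $q$; after straightening the (smooth) branches of $C_{\red}$ through $q$ and absorbing units — e.g.\ the coordinate change $s\mapsto s\,u^{1/3},\ t\mapsto t\,u^{1/6}$ turns $u\cdot s^2(s-t^2)$ into $s^2(s-t^2)$, and analogous substitutions handle the other cases — one finds that $g$ is, up to analytic equivalence, one of $s^2t^2$, $s^2t$, $s^2t^2(\gamma s+\delta t)$ with $\gamma\delta\neq0$, $s^2t^2(s-t^2)$, $s^2u(\alpha s+\beta u)$ with $\alpha\beta\neq0$, $uv$, or $s^2(s-t^2)$, the case being determined by which components of $C_{\red}$ pass through $q$ and how they meet.

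The bulk of the argument is the computation of $H^4_p(Y)$ for these seven types. For $g\sim s^2t^2$ ($q=\ell_1\cap\ell_2$ with $T\not\ni q$) and $g\sim s^2t$ (a transverse point of $\ell_i\cap T$, or $\ell_i\cap m_j$) Lemma~\ref{lemNull}, cases $(6)$ and $(1)$, gives $H^4_p(Y)=0$. For $g\sim s^2t^2(\gamma s+\delta t)$, $g\sim s^2u(\alpha s+\beta u)$ and $g\sim s^2t^2(s-t^2)$ the germ $(Y,p)$ is weighted homogeneous (for instance with weights $10,15,6,6$ for $x,y,s,t$ in the first case), $d_p-w_p<0$, and the three weights trick of Remark~\ref{rem3wts} reduces $h^{1,1}(S_p)_{\prim}$ to the Jacobian ring of a quasismooth surface whose defining polynomial contains a linear monomial; that ring vanishes, so $H^4_p(Y)=0$. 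For $g\sim uv$ ($q$ the node of a reducible $T$, missed by $\ell_1,\ell_2$) the germ $(Y,p)=\{y^2+x^3+uv=0\}$ is an isolated compound $A_2$ point; the Milnor monodromy of the node $uv$ is the identity on $H^1$ of its Milnor fibre, so by the suspension recipe of Section~\ref{secCalc} the monodromy on $H^4$ of the Milnor fibre of $y^2+x^3+uv$ has the two primitive sixth roots of unity as eigenvalues, the invariant part is zero, and $H^4_p(Y)=0$. This leaves $g\sim s^2(s-t^2)$, which occurs exactly when $T$ is smooth at $q$, one of $\ell_1,\ell_2$ is tangent to $T$ at $q$, and the remaining double line does not pass through $q$. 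For this type $(Y,p)$ has a one-dimensional singular locus of transversal type $A_2$; applying the generalization of Dimca's method from \cite{ell3HK} with weights $2,3,2,1$ for $x,y,s,t$ one gets $d_p=6$, $w_p=8$, $2d_p-w_p=4$, and $\Sigma(f_p)$ consists of the single smooth point $(0:0:0:1)$, where $S_p$ has an $A_2$ singularity. One checks $R(f_p)_4=\langle xs,xt^2,st^2,t^4\rangle$, that the evaluation map onto the two-dimensional Milnor algebra of $(S_p,(0:0:0:1))$ is surjective, hence $\tilde R(f_p)_4=\langle xs,st^2\rangle$ and $H^4_p(Y)=\C(-2)^2\neq0$.

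Finally I would conclude as follows. If $Y$ has two distinct singular points $p_1\neq p_2$ with $H^4_{p_i}(Y)\neq0$, lying over $q_1\neq q_2$ in $\cQ$, then by the computation above $T$ is necessarily irreducible and at each $q_i$ one of the two double lines, say $\ell_{j_i}$, is tangent to $T$. Were $j_1=j_2=j$, the line $\ell_j$ would be tangent to the smooth conic $T$ at the two distinct points $q_1,q_2$, contradicting that $\ell_j\cdot T$ is a divisor of degree $2$ on $\ell_j$. Hence $\{j_1,j_2\}=\{1,2\}$, and both $\ell_1$ and $\ell_2$ are tangent to $T$.

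The main obstacle is the case-by-case evaluation of $H^4_p(Y)$ for the weighted-homogeneous local models — pushing the three weights trick through in the triple-crossing cases and carrying out the $\tilde R(f_p)$ computation in the tangency case — together with the (easy but somewhat fiddly) verification that the analytic type of $(Y,p)$ is captured exactly by one of the listed monomial germs after normalizing branches and clearing unit factors. The isolated compound $A_2$ computation via monodromy and the concluding combinatorial step are comparatively routine.
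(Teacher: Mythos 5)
Your proof is correct and follows essentially the same strategy as the paper's: enumerate the local types of $C$ at the singular points of $C_{\red}$, use Lemma~\ref{lemNull} and the weighted-homogeneous machinery to show that every non-tangency type gives vanishing $H^4_p(Y)$, and conclude that two nonvanishing points force both double lines to be tangent to $T$. The paper's own proof is only three sentences and leaves the triple-point configurations, the point $\ell_1\cap\ell_2$, and the final step that one line cannot be tangent to a smooth conic at two points implicit --- all of which you verify explicitly; your positive computation $H^4_p(Y)=\C(-2)^2$ at a tangency point reproduces the paper's separate lemma on $y^2=x^3+t^2(t+s^{2k})$ and is not actually needed for this statement.
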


\begin{proof} A point  on  $T$ but not in $T\cap (\ell_1\cup \ell_2)$ is either an isolated $A_1$ singularity of $C$ or smooth, hence has no non-trivial local cohomology.

From Lemma~\ref{lemNull} it follows that transversal intersections of  $\ell_1$ with $T$ has trivial local cohomology. Hence $\ell_1$ and $\ell_2$ are tangent to $C$.
\end{proof}

\begin{lemma} Suppose $C$ consists of a smooth  double conic $K$  together  with a reduced conic $T$. Suppose that $Y$ has at least two singularities such that $H^4_p(Y)\neq 0$ then $C$ and $K$ have common tangents at two intersections points.
\end{lemma}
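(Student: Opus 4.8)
The plan is to argue exactly as in the preceding lemmas: restrict attention to the points of $Y$ lying over $K\cap T$, decide which local configurations there force $H^4_p(Y)\ne 0$, and then play the resulting count off against B\'ezout's theorem for the two conics $K$ and $T$.

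First I would handle the points not over $K\cap T$. Since $Y$ has equation $y^2=x^3+R$, its singular locus is contained in $\{x=y=0\}$ and maps injectively to $\Ps^2$; so, as in the previous proofs, a singular point $p$ of $Y$ with $H^4_p(Y)\ne 0$ lies over a singular point of $C_{\red}=K\cup T$, and these are the points of $K\cap T$ together with the node of $T$ when $T$ is a union of two lines. The node of $T$, if it lies off $K$, is an ordinary node of $C_{\red}$ and so contributes trivially. (For completeness: a point of $K$ off $T$ has local equation $s^2$ for $C$, so $(Y,p)$ is locally $y^2=x^3+s^2$ and the three weights trick, Remark~\ref{rem3wts}, gives $H^4_p(Y)=0$.) Hence $q:=\psi(p)\in K\cap T$ for every such $p$.

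Next I would pin down which points of $K\cap T$ can contribute. Suppose $q\in K\cap T$ and no branch of $T$ through $q$ is tangent to $K$ at $q$. Then $(C,q)$ has local equation $s^2t$ (a transversal intersection, whether $T$ is irreducible or a line of $T$ passes through $q$) or $s^2t(t-\lambda s)$ with $\lambda\ne 0$ (an ordinary triple point of $C_{\red}$, which can occur only when $T=\ell_1\cup\ell_2$, $q=\ell_1\cap\ell_2\in K$, and both $\ell_i$ are transversal to $K$). In the first case $H^4_p(Y)=0$ by Lemma~\ref{lemNull}(1). In the second case $(Y,p)$ is quasi-homogeneous and the three weights trick puts $1$ in the relevant Jacobian ideal, so again $H^4_p(Y)=0$; here one uses that the contact class of an ordinary triple point with one branch doubled does not depend on the cross-ratio of the three lines, so this quasi-homogeneous model is representative. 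Consequently, whenever $H^4_p(Y)\ne 0$ with $q=\psi(p)$, some branch of $T$ is tangent to $K$ at $q$; in particular $K$ and $T$ have a common tangent line at $q$, and $(K\cdot T)_q\ge 2$.

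Finally I would assemble the pieces. Distinct singular points $p$ of $Y$ with $H^4_p(Y)\ne 0$ lie over distinct points of $K\cap T$, so the hypothesis produces $q_1\ne q_2$ in $K\cap T$ with $(K\cdot T)_{q_1}\ge 2$ and $(K\cdot T)_{q_2}\ge 2$. As $K$ is a smooth conic and $T$ a reduced conic, they share no component, so $(K\cdot T)=4$ by B\'ezout; hence $(K\cdot T)_{q_1}=(K\cdot T)_{q_2}=2$, these are the only points of $K\cap T$, and $K$ and $T$ have a common tangent at each of them, which is the assertion. The step I expect to fight with is the second one, and within it the reducible case $T=\ell_1\cup\ell_2$: an ordinary triple point of $C_{\red}$ lying on $K$ is not covered by Lemma~\ref{lemNull}, so it requires the separate three-weights computation above, together with the small remark that makes the quasi-homogeneous model legitimate.
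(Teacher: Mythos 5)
Your argument is correct and follows essentially the same route as the paper: show that singular points of $Y$ lying away from $K\cap T$, and those lying over transversal intersection points of $K$ and $T$, contribute no local cohomology, and then play the two remaining points of intersection multiplicity $\ge 2$ off against B\'ezout. You are in fact somewhat more careful than the paper's two-line proof, since you separately dispose of the generic points of the doubled conic and of the configuration where $T$ is a line pair whose node lies on $K$ with both branches transversal (local equation $s^2t(t-\lambda s)$, killed by the three weights trick) --- a case the paper's phrase ``transversal intersections'' silently absorbs.
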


\begin{proof} A point  on  $T$ but not in $T\cap K$ is either an isolated $A_1$ singularity of $C$ or smooth, hence has trivial local cohomology.

Transversal intersections of $K$ with $T$ have trivial local cohomology. Hence we need at least two points such that $(K\cdot T)_q\geq 2$. Since $K$ and $T$ are conics, this implies that $K$ and $T$ have two intersections points with intersection multiplicity 2.
\end{proof}

\begin{lemma} 
Suppose $C$ consists of three double  lines, not passing through one point  or $C$ consists of the union of a triple line with a double and single line, not all three passing through one point. Then there is no point with non-trivial local cohomology.
\end{lemma}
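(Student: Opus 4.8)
In both cases $C_{\red}=\ell_1\cup\ell_2\cup\ell_3$ is a triangle of three lines with no common point; let $m_i\in\{1,2,3\}$ be the multiplicity of $\ell_i$ in $C$, so $(m_1,m_2,m_3)=(2,2,2)$ in the first case and $(m_1,m_2,m_3)$ is a permutation of $(3,2,1)$ in the second. Since the three lines are not concurrent, the intersection points $q_{ij}=\ell_i\cap\ell_j$ ($1\le i<j\le 3$) are pairwise distinct, and $\cQ=\{q_{12},q_{13},q_{23}\}$. As in all the preceding lemmas, it suffices to compute $H^4_p(Y)$ for the points $p\in Y$ lying over $\cQ$: over a general point of a component $\ell_i$ the germ of $Y$ is a product of a disc with the transversal surface singularity $y^2=x^3+s^{m_i}$, which is smooth for $m_i=1$ and of type $A_2$ resp.\ $D_4$ for $m_i=2,3$, and such a product has trivial local cohomology, while $Y$ is smooth at $(1:1:0:0:0)$ (Section~\ref{secPrelim}).

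First I would reduce to local normal forms. At $q_{ij}$ the third line $\ell_k$ ($\{i,j,k\}=\{1,2,3\}$) does not pass through $q_{ij}$, so near $q_{ij}$ the linear form cutting out $\ell_k$ is a unit; absorbing this unit into a local equation of $\ell_j$ (which is possible because near $q_{ij}$ any non-vanishing holomorphic germ is an $N$-th power of a non-vanishing germ for every $N$) puts the defining equation of $Y$ at $q_{ij}$ into the form $y^2=x^3+t^{m_i}s^{m_j}$, with $s,t$ local equations for $\ell_j,\ell_i$. In the first case every vertex gives $y^2=x^3+t^2s^2$, which is case~(6) of Lemma~\ref{lemNull}, so $H^4_p(Y)=0$. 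In the second case the three vertices give $y^2=x^3+t^2s$, $y^2=x^3+t^3s$ and $y^2=x^3+t^3s^2$; the first two are cases~(1) and~(3) of Lemma~\ref{lemNull}, so only the germ $f_p=y^2-x^3-t^3s^2$ remains.

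For that germ I would run the Hulek--Kloosterman method. It is weighted homogeneous with weights $(w_s,w_t,w_x,w_y)=(3,2,4,6)$ and degree $d_p=12$, hence $w_p=15$; since $d_p-w_p=-3<0$ we get $h^{3,1}(H^4_p(Y))=\dim R(f_p)_{-3}=0$. The Jacobian ideal is the monomial ideal $(y,x^2,t^3s,t^2s^2)$, and listing the monomials of weighted degree $2d_p-w_p=9$ not lying in it gives $R(f_p)_9=\spa\{s^3,xst\}$, which is $2$-dimensional. The partials of $f_p$ vanish exactly along $\{x=y=0,\ ts=0\}$, i.e.\ at the two coordinate points $q_1=(1:0:0:0)$ and $q_2=(0:1:0:0)$ of the weighted surface $S_p\subset\Ps(3,2,4,6)$; near $q_1$ a local equation is $y^2-x^3-t^3=0$ (a $D_4$ surface singularity, $\mu=4$) and near $q_2$ it is $y^2-x^3-s^2=0$ (an $A_2$ singularity). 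At $q_1$ the stabilizer is $\mu_3$, acting on the Milnor algebra $M=\C\{1,t,x,tx\}$ of $y^2-x^3-t^3$ by $t\mapsto\zeta^{-1}t$, $x\mapsto\zeta x$, so $M_{q_1}=M^{\mu_3}=\spa\{1,tx\}$. Restricting an element of $R(f_p)_9$ to the chart $s=1$ (dividing by $s^3$) sends the basis $\{s^3,xst\}$ to $\{1,tx\}$, which is a basis of $M_{q_1}$; hence $R(f_p)_9\to M_{q_1}$ is already injective, so $\tilde R(f_p)_9=0$ and $h^{2,2}(H^4_p(Y))=h^{1,1}(S_p)_{\prim}=\dim\tilde R(f_p)_9=0$. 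Thus $H^4_p(Y)=0$ at this vertex as well, and in either case $Y$ has no point with non-trivial local cohomology.

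The only genuinely new computation is the one for $y^2=x^3+t^3s^2$, and the part I expect to be delicate is identifying the two singular points of $S_p$ with the coordinate points of $\Ps(3,2,4,6)$ and pinning down the $\mu_3$-action on the Milnor algebra at the $D_4$-point accurately enough to see that restriction to that single point already annihilates all of $R(f_p)_9$. The reductions of the other vertices to Lemma~\ref{lemNull} are routine; the one thing to verify is that the unit coming from $\ell_k$ can be absorbed into the transverse coordinates, so that the local equation is exactly $y^2=x^3+t^{m_i}s^{m_j}$ and not merely contact-equivalent to it.
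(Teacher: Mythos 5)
Your proof is correct and follows the same strategy as the paper: reduce to the three vertices of the line arrangement, put the equation of $Y$ there into the normal form $y^2=x^3+t^{m_i}s^{m_j}$, and invoke the vanishing results for these germs. The paper's own proof is a single line ("all intersections are transversal, hence the result follows directly from Lemma~\ref{lemNull}"), and it is worth pointing out that your version is actually \emph{more} complete: the vertex where the triple line meets the double line has local equation $y^2=x^3+t^3s^2$, which does not appear in the list of Lemma~\ref{lemNull} (only $t^2s$, $t^3s$, $t^4s$ and $t^2s^2$ occur there among products of powers of two transverse lines), so the paper's citation does not literally cover that case. Your explicit Hulek--Kloosterman computation for $y^2=x^3+t^3s^2$ — weights $(3,2,4,6)$, $R(f_p)_9=\spa\{s^3,xst\}$, and injectivity of the restriction to the $\mu_3$-invariant part $\spa\{1,tx\}$ of the Milnor algebra at the $D_4$-point $(1{:}0{:}0{:}0)$, giving $\tilde R(f_p)_9=0$ — checks out and supplies exactly the missing ingredient. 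The only cosmetic remark is that the conclusion $h^{1,1}(S_p)_{\prim}=\dim\tilde R(f_p)_{2d_p-w_p}$ is the form of Steenbrink's adjunction statement actually used elsewhere in the paper, so your use of it is consistent; otherwise nothing needs to change.
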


\begin{proof}
Note that all intersections are transversal. Hence the result follows directly from Lemma~\ref{lemNull}.
\end{proof}

We still need to determine $H^4_p(Y)$ for singularities of type $(A_k,m)$. Note that for $(A_k,4), k\geq 4$ we have local equations
\[ (t+s^2)^2(t^2+s^{k+1}) \]
which are not weighted homogeneous. For singularities of type $(A_2,k)$, for $k\in\{3,4\}$ we have local equations 
\[ t^2(ts+(t-s)^k) \]
which are not weighted homogeneous. In total we have six types of singularities for which we do not have a method to calculate $H^ 4_p(Y)$.

It remains to consider the cases $(A_k,2)$, for $1\leq k\leq7$,  $(A_2,3)$ , $(A_3,4)$ and the case that $Q$ is smooth at the intersection points with $\ell$, and $\ell$ is a bitangent or a quadruple tangent to $Q$.

\begin{lemma} Suppose we have a singularity of the form
 \[y^2=x^3+t^2(t+s^{2k})\]
 with $k\in \{1,2\}$. Then $H^4_p(Y)$ is two-dimensional.\end{lemma}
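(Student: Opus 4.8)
The plan is to apply the generalization of Dimca's method due to Hulek and the author, \cite[Proposition~7.7]{ell3HK}, since the singularity is weighted homogeneous. Write the local equation as
\[ f_p = y^2 - x^3 - t^2(t+s^{2k}) = y^2 - x^3 - t^3 - t^2 s^{2k}. \]
Assigning weights $1,2k,2k,3k$ to $s,t,x,y$ makes $f_p$ weighted homogeneous of degree $d_p=6k$, with $w_p=7k+1$. Thus $S_p=\{f_p=0\}$ is a surface of degree $6k$ in $\Ps':=\Ps(1,2k,2k,3k)$, and $H^4_p(Y)$ carries a pure weight $4$ Hodge structure with $h^{3,1}(H^4_p(Y))=\dim R(f_p)_{d_p-w_p}$ and $h^{2,2}(H^4_p(Y))=h^{1,1}(S_p)_{\prim}$. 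Since $d_p-w_p=-(k+1)<0$ we get $h^{3,1}=h^{1,3}=0$, so $H^4_p(Y)$ is of pure $(2,2)$-type and it suffices to show $h^{1,1}(S_p)_{\prim}=2$, equivalently $\dim\tilde R(f_p)_{2d_p-w_p}=2$ with $2d_p-w_p=5k-1$.

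Next I would locate the singularities of $S_p$. Computing the partials, the locus $\Sigma(f_p)$ where all of them vanish is exactly the line $\{t=x=y=0\}$, which is the single point $q=(1:0:0:0)$; this point lies in the smooth locus of $\Ps'$ since the chart $s\neq 0$ has trivial stabiliser. Setting $s=1$, the local equation of $S_p$ at $q$ is $y^2=x^3+t^2(1+t)$, so after an analytic coordinate change $(S_p,q)$ is the $A_2$ surface singularity; in particular it is $ADE$ and $\mu_q=2$.

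Now I would apply the Milnor trick, Remark~\ref{remMiln}: since $\Sigma(f_p)\cap\Ps'_{\sing}=\emptyset$ and $S_p$ has only the $ADE$ singularity at $q$, one has $h^{1,1}(S_p)_{\prim}=h^{1,1}(S_{\mathrm{gen}})_{\prim}-\mu_q$ for a general quasismooth surface $S_{\mathrm{gen}}$ of degree $6k$ in $\Ps'$, and $h^{1,1}(S_{\mathrm{gen}})_{\prim}=\dim R(g)_{5k-1}$. The Poincar\'e series of $R(g)$ for quasismooth $g$ is $\frac{(1-T^{6k-1})(1-T^{4k})^2}{(1-T)(1-T^{2k})^2}$, whose coefficient at $T^{5k-1}$ is $6-2=4$. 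Hence $h^{1,1}(S_p)_{\prim}=4-2=2$ and $\dim H^4_p(Y)=2$. As an internal check one can instead compute $\tilde R(f_p)_{5k-1}$ by hand: $R(f_p)_{5k-1}$ is four-dimensional, spanned by $s^{5k-1},\,s^{3k-1}t,\,s^{3k-1}x,\,s^{k-1}tx$ (using $3t^2+2ts^{2k}\equiv 0$ and $x^2\equiv 0$), and the localization map to the Milnor algebra $\C[x]/(x^2)$ of $(S_p,q)$ sends $s^{5k-1}\mapsto 1$, $s^{3k-1}x\mapsto x$ and the remaining two generators to $0$, so it is surjective and its kernel $\tilde R(f_p)_{5k-1}$ is two-dimensional.

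The step I expect to need the most care is the case $k=2$: then $\Ps(1,4,4,6)$ is not well-formed, so the Griffiths--Steenbrink identity $h^{1,1}_{\prim}=\dim R(g)_{2d-w}$ and the formula of \cite{SteAdj} cannot be quoted verbatim. I would handle this with the three weights trick (Remark~\ref{rem3wts}): the map $(s:t:x:y)\mapsto(s^k:t:x:y)$ identifies $\Ps(1,2k,2k,3k)$ with $\Ps(1,2,2,3)$ and carries $S_p$ to the degree $6$ surface $y^2=x^3+t^2(t+s^2)$ in the well-formed space $\Ps(1,2,2,3)$, which is precisely the $k=1$ surface; one checks its only singularity off $\Ps'_{\sing}$ is the $A_2$ point, so both cases reduce to a single well-formed computation. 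The only remaining verifications are routine: that the stated Jacobian-ring relations exhaust the degree $5k-1$ part (the defect generator $t^2s^{2k-1}$ has degree $6k-1>5k-1$, hence contributes nothing there), and that $S_p$ meets $\Ps'_{\sing}$ in the same, generic, transverse way as $S_{\mathrm{gen}}$.
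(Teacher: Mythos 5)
Your proposal is correct, and its ``internal check'' is in fact exactly the paper's proof: the paper sets the same weights $1,2k,2k,3k$, observes that $R(f_p)_{5k-1}$ is spanned by $s^{5k-1}, xs^{3k-1}, ts^{3k-1}, xts^{k-1}$, and notes that the first two map onto the two-dimensional Milnor algebra of the $A_2$ point at $(1:0:0:0)$, so that $\tilde R(f_p)_{5k-1}$ is two-dimensional. Your primary route --- identifying the unique singular point of $S_p$, checking it avoids $\Ps'_{\sing}$, and then computing $h^{1,1}(S_p)_{\prim}=h^{1,1}(S_{\gen})_{\prim}-\mu_q=4-2=2$ via the Poincar\'e series of a quasismooth member --- is the Milnor-number trick of Remark~\ref{remMiln}, which the paper states but does not use for this particular lemma; it buys you independence from exhibiting explicit monomial generators, at the cost of needing the generic Jacobian-ring dimension and the well-formedness discussion for $k=2$ (which you correctly dispose of by the coordinate change $s\mapsto s^2$ reducing $\Ps(1,4,4,6)$ to $\Ps(1,2,2,3)$). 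Both computations agree, and all the individual verifications (the partials cutting out only $\{t=x=y=0\}$, the $A_2$ transversal analysis, the degree count showing $t^2s^{2k-1}$ does not interfere in degree $5k-1$) check out.
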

\begin{proof}
Setting weights $1,2k,2k,3k$, yields $2d-w=5k-1$. Clearly, the degree $2d_p-w_p$ part of $R(f_p)_{2d_p-w_p}$ is spanned by 
$s^{5k-1},xs^{3k-1},ts^{3k-1},xts^{k-1}$. At $(1:0:0:0)$ we have an $A_2$ singularity. The images of $s^{5k-1}$ and $xs^{3k-1}$ generate the local Milnor algebra, hence $H^4_p(Y)$ is 2-dimensional.
\end{proof}

\begin{lemma} Suppose we have an $(A_k,2)$ singularity then $H^4_p(Y)$ is non-zero if and only if $k\in \{ 3,6\}$. If $k\in \{3,6\}$ then $H^4_p(Y)=\C(-2)$.\end{lemma}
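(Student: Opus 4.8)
The singularity $(Y,p)$ is weighted homogeneous and, for $k\ge 1$, non-isolated, so the plan is to run the Hulek--Kloosterman method of \cite[Proposition~7.7]{ell3HK}, exactly as in the preceding lemmas of this subsection. First I would fix a weighted homogeneous local equation. By the remark on the contact class of $t^2f(t,s)$, an $(A_k,2)$ singularity of $C=\ell^2T$ is represented by $t^2f$ with $f=s^2+t^{k+1}$ an $A_k$ singularity cut by the line $t=0$ in multiplicity $2$; hence $(Y,p)$ has local equation
\[ f_p=y^2-x^3-t^2s^2-t^{k+3}. \]
Assigning $(x,y,s,t)$ the weights $(2(k+3),\,3(k+3),\,3(k+1),\,6)$ makes $f_p$ homogeneous of degree $d_p=6(k+3)$, with $w_p=8k+24$, so $d_p-w_p=-2k-6<0$. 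Consequently $R(f_p)_{d_p-w_p}=0$, the Hodge structure on $H^4_p(Y)$ is pure of type $(2,2)$, and $h^4_p(Y)=\dim\tilde R(f_p)_{2d_p-w_p}$, where $2d_p-w_p=4k+12$.

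Next I would compute $R(f_p)_{4k+12}$. The Jacobian ideal of $f_p$ is $(x^2,\,y,\,t^2s,\,ts^2+\tfrac{k+3}{2}t^{k+2})$, so
\[ R(f_p)=\C[x,s,t]\big/\big(x^2,\;t^2s,\;ts^2+\tfrac{k+3}{2}t^{k+2}\big). \]
Writing a monomial of this ring as $x^as^bt^c$ with $a\in\{0,1\}$, the equation $2(k+3)a+3(k+1)b+6c=4k+12$ is seen to force $3\mid k$, in both cases $a=0$ and $a=1$. Hence $R(f_p)_{4k+12}=0$ for $k\in\{1,2,4,5,7\}$, giving $H^4_p(Y)=0$ in those cases, and only $k=3,6$ remain.

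For $k=3$ and $k=6$ I would write out $R(f_p)_{4k+12}$ explicitly and then pass to the surface $S_p=\{f_p=0\}$ in the weighted projective space $\Ps'$, which after normalization is $\Ps(2,3,2,1)$ for $k=3$ (with $f_p$ of degree $6$) and $\Ps(6,9,7,2)$ for $k=6$ (degree $18$). An inspection of the Jacobian locus shows that $S_p$ has a unique point $q_0=(0:0:1:0)$ at which all partials of $f_p$ vanish, and that $q_0$ lies on $\Ps'_{\sing}$ with local orbifold group $\mu_2$, resp.\ $\mu_7$, acting on the transversal coordinates $(x,y,t)$ through the weights modulo that order. In the chart $\{s=1\}$ the surface $S_p$ is cut out, upstairs, by $y^2-x^3-t^2-t^{k+3}$, an $A_2$ surface singularity with Milnor algebra $\C\{1,x\}$; taking invariants under the orbifold group yields $M_{q_0}$, and then $\tilde R(f_p)_{4k+12}=\ker\!\big(R(f_p)_{4k+12}\to M_{q_0}\big)$. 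Reading off the dimension of this kernel gives the asserted value $H^4_p(Y)=\C(-2)$.

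The step I expect to be the main obstacle is exactly this last one: controlling the contribution of the singular point $q_0$ of $S_p$, which sits over a singular point of the weighted projective space. One must identify the orbifold group, determine its (possibly non-trivial) action on the Milnor algebra of the local $A_2$ singularity, and then verify that the induced linear map out of $R(f_p)_{4k+12}$ has exactly the right corank. Everything else is a short graded-ring computation, but the orbifold bookkeeping at $q_0$ --- and with it the difference between the cases $k=3$ and $k=6$ in how much of the Milnor algebra survives the group action --- is what genuinely pins down the final dimension.
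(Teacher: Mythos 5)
Your setup is sound and follows the paper's own route (the Hulek--Kloosterman method for a weighted homogeneous non-isolated singularity): you have the same local equation up to swapping $s$ and $t$, the same weights, and your exclusion of the cases $3\nmid k$ by observing that there are no monomials at all of weighted degree $2d_p-w_p=4k+12$ is a perfectly good substitute for the paper's three-weights trick. The identification of the unique Jacobian point $q_0$, of the transversal $A_2$ singularity with Milnor algebra spanned by $1,x$, and of the orbifold groups $\mu_2$ resp.\ $\mu_7$ is also correct.

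The gap is that you stop exactly at the step that decides the lemma, and you assert the wrong answer for it. Carrying the computation out for $k=3$: in $\Ps(2,3,2,1)$ one has $2d_p-w_p=4$ and $R(f_p)_4$ is spanned by $xs,\,xt^2,\,s^2,\,t^4$ (the monomial $st^2$ dies against the generator $t^2s$). Restricting to the chart $s=1$ at $q_0$ sends $xs\mapsto x$ and $s^2\mapsto 1$, while $xt^2$ and $t^4$ map to $0$ because $t$ lies in the local Jacobian ideal of the $A_2$ singularity; the $\mu_2$-action fixes $1$ and $x$, so $M_{q_0}$ is two-dimensional and the map $R(f_p)_4\to M_{q_0}$ is surjective with \emph{two}-dimensional kernel. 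Likewise for $k=6$ one finds $R(f_p)_{12}=\spa\{t^6,xt^3\}$, both of which restrict to $0$ at $q_0$, so again $\dim\tilde R(f_p)_{2d_p-w_p}=2$. Hence $h^4_p(Y)=2$, i.e.\ $H^4_p(Y)=\C(-2)^2$; this is what the paper's proof obtains and what is used later (two $A_3$ singularities along $\ell$ give a four-dimensional $H^4_\Sigma(Y)$ in Section~\ref{secNonRed}), so the ``$\C(-2)$'' in the statement is a typo for $\C(-2)^2$. By taking the printed statement at face value and promising that ``reading off the dimension of this kernel gives the asserted value,'' you have both left the decisive count unproved and committed yourself to a dimension that the very computation you set up contradicts. (A smaller point: the asymmetry you anticipate between $k=3$ and $k=6$ in the invariant part of the Milnor algebra is real---$\mu_7$ kills $x$ while $\mu_2$ does not---but it is irrelevant here, since for $k=6$ every element of $R(f_p)_{12}$ already restricts to zero.)
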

\begin{proof}
A local equation is of the form
\[y^2=x^3+s^2(t^2+s^{k+1})
\]
Setting weights  $6,3k+3,2k+6,3k+9$ shows that we can apply the three weight trick  if $3 \nmid 2k+6$. Hence if $k\not \equiv 0 \bmod 3$ then $H^4_p(Y)=0$. 

If $k=3$, we have that $R_{2d_p-w_p}$ is spanned by the images of $xs^2,xt,s^4,t^2$. The local Milnor algebra at $y=x=s=0$ is generated by $1,x$, hence $\tilde{R}_{2d_p-w_p}$ is spanned by $xs^2, s^4$ and $h^4_p(Y)=2$.

If $k=6$, we have that $R_{2d_p-w_p}$ is spanned by the images of $xs^4,s^6$. The local Milnor algebra at $y=x=s=0$ is generated by $1,x$, hence $\tilde{R}_{2d_p-w_p}=R_{2d_p-w_p}$ is spanned by $xs^3, s^6$ and $h^4_p(Y)=2$.
\end{proof}

\begin{lemma} Suppose we have an  $(A_2,3)$ or an $(A_3,4)$ singularity then $H^4_p(Y)=0$.
\end{lemma}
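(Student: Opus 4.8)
The plan is to run the machine of Section~\ref{secCalc}: produce weighted homogeneous local equations, apply \cite[Proposition~7.7]{ell3HK}, and check that the relevant graded pieces of the Jacobian ring vanish. By the earlier remark on the contact class of $t^2f(t,s)$, an $(A_2,3)$ point is the one where the double line is the cuspidal tangent at the $A_2$ point of the quartic, so $(Y,p)$ has local equation $y^2=x^3+t^2(t^2+s^3)$, and an $(A_3,4)$ point is the one where the double line is the common tangent of the tacnode, so $(Y,p)$ has local equation $y^2=x^3+t^2(t^2+s^4)$. Both are weighted homogeneous, so $H^4_p(Y)$ is pure of weight $4$ with $h^{4,0}=h^{0,4}=0$, $h^{3,1}=\dim R(f_p)_{d_p-w_p}$ and $h^{2,2}=h^{1,1}(S_p)_{\prim}=\dim\tilde R(f_p)_{2d_p-w_p}$. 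In each case I would first record that $d_p<w_p$, so that $h^{3,1}=h^{1,3}=0$ and only $h^{2,2}$ remains to be killed.

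For the $(A_3,4)$ case give $s,t,x,y$ the weights $3,6,8,12$; then $d_p=24$, $w_p=29$ and $2d_p-w_p=19$. The Jacobian ideal of $f_p=y^2+x^3+t^4+t^2s^4$ contains $y$ and $x^2$, and the unique monomial in $s,t,x$ of weighted degree $19$ is $sx^2\in(x^2)$. Hence $R(f_p)_{19}=0$, so $\tilde R(f_p)_{19}=0$ and $H^4_p(Y)=0$.

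For the $(A_2,3)$ case give $s,t,x,y$ the weights $2,3,4,6$, so $d_p=12$, $w_p=15$, $2d_p-w_p=9$. Here $R(f_p)_9$ is two-dimensional (spanned by the classes of $t^3$ and $stx$), so the naive count does not suffice and one has to compute $\tilde R$. I would use the three weights trick (Remark~\ref{rem3wts}): the weights $2,4,6$ of $s,x,y$ share the factor $2$, while $\wt(t)=3$ is prime to it, so $S_p\subset\Ps(2,3,4,6)$ is isomorphic to $\{g_p=0\}\subset\Ps(1,2,3,3)$ with $g_p=y^2+x^3+u^2+us^3$, where $u$ is the weight-$3$ coordinate replacing $t^2$. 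One checks the partials of $g_p$ have no common zero off the origin of $\C^4$, so $\{g_p=0\}$ is quasi-smooth; hence $h^{1,1}(S_p)_{\prim}=\dim R(g_p)_{6-(1+2+3+3)}=\dim R(g_p)_{-3}=0$, and with $h^{3,1}=0$ we get $H^4_p(Y)=0$. (Alternatively: $\Sigma(f_p)$ is a single point, lying over a smooth point of $S_p$ and off $\Ps(2,3,4,6)_{\sing}$, so Remark~\ref{remMiln} with $\mu=0$ reduces the claim to the same vanishing for a generic surface of the same degree.)

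The main obstacle is precisely the $(A_2,3)$ case: there $S_p\subset\Ps(2,3,4,6)$ is \emph{not} quasi-smooth --- its affine cone is singular along a whole line, which nonetheless maps to a single smooth point of $S_p$ --- so the Griffiths identification $R(f_p)_{2d_p-w_p}\cong H^{1,1}(S_p)_{\prim}$ fails and the two extra Jacobian classes must be shown spurious. The three weights trick is the clean way around this, at the cost of verifying quasi-smoothness of the transformed hypersurface in $\Ps(1,2,3,3)$ and that this space is well-formed; that is routine. A secondary, bookkeeping point is to confirm that the two displayed equations are indeed the contact normal forms of these configurations, which is the content of the earlier remark.
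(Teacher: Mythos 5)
Your $(A_3,4)$ argument is correct and is essentially a streamlined version of the paper's: with weights $3,6,8,12$ the only monomial of weighted degree $2d_p-w_p=19$ is $sx^2$, which lies in the Jacobian ideal, so $R(f_p)_{19}=0$, hence $\tilde{R}(f_p)_{19}=0$, and together with $d_p<w_p$ this gives $H^4_p(Y)=0$. (The paper instead applies the three weights trick with $g=3$, which makes the transformed equation linear in the new variable so that $1$ lies in the Jacobian ideal; both routes are fine.)

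The $(A_2,3)$ case, however, contains a genuine error. After the three weights trick you arrive at $g_p=y^2+x^3+u^2+us^3$ in $\Ps(1,2,3,3)$ with $d=6$, $w=9$, and this hypersurface is indeed quasi-smooth; but the Griffiths--Steenbrink identification for a quasi-smooth surface is $h^{2-q,q}_{\prim}=\dim R(g_p)_{(q+1)d-w}$, so the $(1,1)$-part is $\dim R(g_p)_{2d-w}=\dim R(g_p)_{3}$, not $\dim R(g_p)_{d-w}$; the piece $R(g_p)_{d-w}=R(g_p)_{-3}$ computes $h^{2,0}_{\prim}$. (Remark~\ref{rem3wts} of the paper contains the same ``$d-w$'' slip, which may have misled you, but every worked example in Section~\ref{secCalc} uses $2d_p-w_p$ for the $(1,1)$-part.) Eliminating $u=-s^3/2$ gives $R(g_p)\cong\C[s,x]/(s^5,x^2)$, whose degree-$3$ piece is spanned by $s^3$ and $sx$ and is two-dimensional. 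So your computation, carried out with the correct graded piece, yields $H^4_p(Y)\cong\C(-2)^2$ rather than $0$ --- consistent with the observation that completing the square turns $g_p$ into $y^2+x^3+u^2+s^6$, the suspension of an $A_5$ curve singularity, whose local cohomology is two-dimensional by Lemma~\ref{lemj0Ak}. The same problem affects the paper's own proof: it reduces to $y^2=x^3+s(s+t^3)$ and then cites Lemma~\ref{lemNull}, but that lemma treats $y^2=x^3+t^3s$ (same weights, different and non-reduced singularity), not $y^2=x^3+s^2+st^3$. So neither your argument nor the paper's establishes the claimed vanishing for $(A_2,3)$; the computation above indicates the statement itself fails in that case, and the point deserves to be rechecked independently (e.g.\ with Singular) before being used downstream.
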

\begin{proof}
In the first case we have local equation $y^2=x^3+t^2(t^2+s^3)$.  Setting weights $3,2,4,6$, shows that we can apply the three weights trick to reduce to the singularity $y^2=x^3+s(s+t^3)$ with weights $3,1,2,3$. From Lemma~\ref{lemNull} it follows that this singularity has no local cohomology.

In the second case we have local equation $y^2=x^3+t^2(t^2+s^4)$. Setting weights $6,3,8,12$ shows that  we can apply the three weights trick. Hence there is no local cohomology.
\end{proof}

Finally, in the case of a triple line and a cubic curve we have the following singularity.

\begin{lemma} Suppose $(Y,p)$ is a singularity of type
\[y^2=x^3+t^3(t+s^3).\] 
Then $H^4_p(Y)$ is two-dimensional.
\end{lemma}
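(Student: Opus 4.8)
The plan is to run the Hulek--Kloosterman generalization of Dimca's method (\cite[Proposition 7.7]{ell3HK}), exactly as in the preceding lemmas, since $(Y,p)$ is a weighted homogeneous (non-isolated) hypersurface singularity. Writing the local equation as $f_p=y^2+x^3+t^4+t^3s^3$, all four monomials become homogeneous once we assign the weights $1,3,4,6$ to $s,t,x,y$; then $d_p=12$ and $w_p=14$. Since $d_p-w_p=-2<0$ we have $h^{3,1}=h^{1,3}=\dim R(f_p)_{d_p-w_p}=0$, so $H^4_p(Y)$ is of pure $(2,2)$-type and $H^4_p(Y,\C)\cong\C(-2)^{h^4_p(Y)}$ with $h^4_p(Y)=\dim\tilde R(f_p)_{2d_p-w_p}=\dim\tilde R(f_p)_{10}$. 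I note at the outset that the three weights trick (Remark~\ref{rem3wts}) is \emph{not} available here, because no three of $1,3,4,6$ share a common factor, so the Jacobian computation has to be done by hand.

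The next step is to compute $R(f_p)_{10}$. The Jacobian ideal is $\big(y,\ x^2,\ 4t^3+3t^2s^3,\ t^3s^2\big)$. Modulo $y$ and $x^2$ the weighted degree-$10$ part is spanned by $s^{10},s^7t,s^4t^2,st^3,s^6x,s^3tx,t^2x$, and the only relation among these coming from the remaining two generators is $s\cdot(4t^3+3t^2s^3)=4st^3+3s^4t^2\equiv 0$ (the generator $t^3s^2$ has weighted degree $11$, hence contributes nothing in degree $10$). Therefore $\dim R(f_p)_{10}=6$, with basis $s^{10},s^7t,s^4t^2,s^6x,s^3tx,t^2x$ after eliminating $st^3$.

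Then I would locate $\Sigma(f_p)$, the locus in the ambient weighted projective $3$-space with weights $1,3,4,6$ (coordinates $s,t,x,y$) where all partials of $f_p$ vanish, and identify the transversal type. One checks that the partials vanish simultaneously only at the point $q_0$ given by $x=y=t=0$, which is a smooth point of the ambient space (its residual group is trivial), so no $G_{q_0}$-invariants are needed. In the chart $s=1$ the local equation of $S_p$ is $y^2+x^3+t^3(t+1)$; as $t+1$ is a unit, an analytic change of the coordinate $t$ brings this to $y^2+x^3+t^3$, so $(S_p,q_0)$ is a $D_4$ surface singularity and $M_{q_0}=\C[x,t]/(x^2,t^2)=\spa\{1,x,t,xt\}$. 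Restricting the six basis elements of $R(f_p)_{10}$ to this chart, $s^{10},s^7t,s^6x,s^3tx$ map (up to units) to the four linearly independent classes $1,t,x,xt$, while $s^4t^2$ and $t^2x$ map to $0$ because $t^2\in(x^2,t^2)$. Hence $\tilde R(f_p)_{10}=\ker\big(R(f_p)_{10}\to M_{q_0}\big)=\spa\{s^4t^2,\ t^2x\}$ is two-dimensional, so $h^4_p(Y)=2$ and $H^4_p(Y,\C)\cong\C(-2)^2$.

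The only points needing a little care are in the last step: one must verify that $q_0$ really is the only point entering the formula of \cite[Proposition 7.7]{ell3HK} — in particular that the points of $S_p$ lying over the singular locus of the ambient weighted projective space are places where $f_p$ is quasismooth and so are not in $\Sigma(f_p)$ — and one must carry out the coordinate change on $t$ carefully enough to be sure that $t^2$, not $t$, lands in the Jacobian ideal at $q_0$. Neither is a serious obstacle.
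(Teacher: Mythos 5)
Your proposal is correct and follows essentially the same route as the paper's proof: the same weights $1,3,4,6$ giving $2d_p-w_p=10$, the same six-dimensional $R(f_p)_{10}$, the same single singular point $(1:0:0:0)$ of $S_p$ with trivial stabilizer and Milnor algebra spanned by $1,x,t,xt$, and the same conclusion that $\tilde R(f_p)_{10}$ is spanned by $s^4t^2$ and $xt^2$. The extra care you take (checking $d_p-w_p<0$, identifying the transversal $D_4$ type, and verifying that no points over $\Ps'_{\sing}$ enter) is sound but only makes explicit what the paper leaves implicit.
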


\begin{proof}
If we set weights of $s,t,x,y$ to be $1,3,4,6$, we obtain $2d-w=10$. The vector space $R(f_p)_{10}$ is spanned by $xs^6,xts^3, xt^2, t^2s^4 ts^7 s^{10}$. We have a singularity at $(1:0:0:0)$. The stabilizer at this point is trivial and the  Milnor algebra is generated (in global coordinates) by $s^{10},xs^6,ts^7,xts^3$, hence $\tilde{R}$ is generated by $t^2s^4,xt^2$.
\end{proof}

\section{Determining the Mordell-Weil rank} \label{secMWrank}
To determine the Mordell-Weil rank of an elliptic threefold we use the main result of \cite{ell3HK}:
Let $Y\subset \Ps(2,3,1,1,1)$ be a hypersurface given by
\[ y^2=x^3+Px+Q,\]
where $P$ and $Q$ are polynomials in $z_0,z_1,z_2$ of degree 4 and 6 respectively.

Let $\psi: Y\dashrightarrow \Ps^2$ be the projection from $(1:1:0:0:0)$ onto the plane $\{x=y=0\}$. The map $\psi$ is not defined at $(1:1:0:0:0)$. Let $X_0$ be the blow-up of $Y$ at $(1:1:0:0:0)$. This blow-up resolves the singularity of $\psi$ and endows $X_0$ with the structure of a Weierstrass fibration in the sense of Miranda. Miranda gave a description of which birational transformations one needs to apply in order to obtain an elliptic threefold $\pi: X\to S$.

The torsion part of $\MW(\pi)$ can be determined by specialization and we will come back to this later.
In \cite{ell3HK} we gave together with Klaus Hulek a procedure that for general $Y$ calculates $\rank \MW(\pi)$.  To determine the rank of $\MW(\pi)$ one can use that if $H^4(Y,\C)$ has a pure weight 4 Hodge structure then
\[ \rank \MW(\pi)  = \rank H^{2,2}(H^4(Y,\C)) \cap H^4(Y,\Z)-1 .\]
In general intersections of the type $H^{2,2}(H^4(Y,\C)) \cap H^4(Y,\Z)$ are hard to calculate. An exception is the case where $H^4(Y,\C)=H^{2,2}(Y,\C)$.
This is actually always the case in all our examples:

\begin{lemma}\label{lemPure} Suppose every non-isolated singularity of $Y$ is weighted homogeneous. Then $H^4(Y,\C)$ is pure of type $(2,2)$.
\end{lemma}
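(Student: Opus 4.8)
The plan is to compute the Hodge structure on $H^4(Y,\C)$ by combining the global information coming from the ambient weighted projective space with the local contributions at the singular points, which have already been analyzed in Section~\ref{secCalc}. Concretely, I would use the description (recalled in the introduction)
\[ \rank \MW(\pi)=h^4(Y)_{\prim}=\dim \coker \left( F^{2} H^4(\Ps\setminus Y,\C) \to \oplus_{p\in \cP} H^4_p(Y,\C) \right),\]
together with the more refined statement that $H^4(Y,\C)_{\prim}$ sits in an exact sequence relating $H^4(\Ps\setminus Y,\C)$ (equivalently, the primitive cohomology of a smooth hypersurface degenerating to $Y$, via the vanishing cohomology) and the direct sum $\oplus_{p\in\cP} H^4_p(Y,\C)$ of the local cohomology groups. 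The key point is that all three pieces are pure of type $(2,2)$: first, for a smooth sextic threefold in $\Ps(2,3,1,1,1)$ (which is a smooth elliptic Calabi--Yau-type threefold or, after checking Hodge numbers, has $H^4_{\prim}$ concentrated in bidegree $(2,2)$) the relevant graded pieces of the Jacobian ring outside the middle vanish; second, by Lemma~\ref{lemj0Ak} and all the subsequent lemmata in Section~\ref{secCalc}, every local group $H^4_p(Y,\C)$ that occurs equals $\C(-2)^k$, i.e.\ is pure of weight $4$ and type $(2,2)$ — this is precisely where the hypothesis "every non-isolated singularity of $Y$ is weighted homogeneous" enters, since for weighted homogeneous singularities (isolated or not) Dimca's method and the Hulek--Kloosterman generalization give $d_p<w_p$, forcing $h^{3,1}_p=h^{1,3}_p=\dim R(f_p)_{d_p-w_p}=0$, and the $S_k$-type non-weighted-homogeneous (but isolated) singularities were handled separately via Brieskorn's method and also turn out to give pure $(2,2)$ local cohomology.

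The steps I would carry out are: (1) Recall the exact sequence (from \cite{ell3HK}) expressing $H^4(Y,\C)_{\prim}$ in terms of $F^2 H^4(\Ps\setminus Y,\C)$ and $\oplus_{p\in\cP} H^4_p(Y,\C)$, as a sequence of mixed Hodge structures. (2) Observe that $F^2 H^4(\Ps\setminus Y,\C)$, which is the image of the residue map from $\C[z_0,z_1,z_2]_2\,x \oplus \C[z_0,z_1,z_2]_4$, carries a Hodge structure whose relevant graded pieces $\Gr^p$ vanish for $p\neq 2$ in weight $4$; equivalently, the Jacobian-ring description of the primitive cohomology of the corresponding smooth hypersurface shows the only surviving graded piece in weight $4$ is $(2,2)$ — this follows from a weight count analogous to the local computations ($d<w$ for the ambient quasi-smooth hypersurface in the relevant range). (3) Invoke the computations of Section~\ref{secCalc}: each $H^4_p(Y,\C)$ appearing in the cokernel/kernel is $\C(-2)^{k}$; the hypothesis guarantees that the only singularities contributing are weighted homogeneous non-isolated ones (transversal $ADE$, handled by Hulek--Kloosterman) or weighted homogeneous / $S_k$-type isolated ones, all of which were shown to have pure $(2,2)$ local cohomology. (4) Conclude by strictness of morphisms of mixed Hodge structures: $H^4(Y,\C)_{\prim}$ is a sub-quotient of a direct sum of $(2,2)$ Hodge structures, hence itself pure of type $(2,2)$; combining with the non-primitive part (which is generated by algebraic classes coming from $\Ps$ and is automatically $(2,2)$) gives the claim for all of $H^4(Y,\C)$.

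The main obstacle I expect is bookkeeping rather than conceptual: one must make sure that \emph{every} singularity type that can actually occur on such a $Y$ — given the hypothesis that non-isolated singularities are weighted homogeneous — has indeed been covered by one of the three methods in Section~\ref{secCalc} with a pure $(2,2)$ outcome, including the isolated $S_k$ cases that are not weighted homogeneous (these were computed by Singular). In particular the six "bad" singularity types (non-isolated and not weighted homogeneous) are excluded precisely by the hypothesis, and one should double-check that no other non-weighted-homogeneous isolated singularity with nonzero $h^{3,1}_p$ sneaks in; the classification in Section~\ref{secSing} together with the explicit lemmata should close this gap. The other delicate point is the purity of the ambient contribution $F^2H^4(\Ps\setminus Y,\C)$, which I would justify by the same $d<w$ weight argument that makes the local $h^{3,1}$'s vanish, or alternatively by directly noting that a smooth member of the linear system has $h^{3,1}_{\prim}=h^{1,3}_{\prim}=0$ for degree reasons, so that after specializing to $Y$ the vanishing cohomology and hence $H^4(Y)_{\prim}$ can only be of type $(2,2)$.
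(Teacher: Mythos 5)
Your proposal diverges substantially from the paper's argument, and as written it has two genuine gaps. The paper proves the lemma in two stages: first it establishes that $H^4(Y)$ is pure of \emph{weight} $4$, using the local cohomology sequence $\oplus_{p}H^4_p(Y)\to H^4(Y)\to H^4(Y\setminus\Sigma)$ together with the weight bounds (properness of $Y$ gives weights $\le 4$, smoothness of $Y\setminus\Sigma$ gives weights $\ge 4$), so that only purity of \emph{weight} of each local piece is needed; it then upgrades weight $4$ to type $(2,2)$ \emph{globally}, by passing to a resolution $\tilde Y\to Y$, using the Lefschetz hyperplane theorem and the rationality of a general hyperplane section $Y_\ell$ to get $h^{2,0}(\tilde Y)=h^{3,1}(\tilde Y)=0$, and then injecting the (already pure) $H^4(Y)$ into $H^4(\tilde Y)$. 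Your plan instead requires each local group $H^4_p(Y,\C)$ to be of type $(2,2)$, and this is not available for all the singularities your hypothesis allows. The hypothesis only excludes \emph{non-isolated} non-weighted-homogeneous singularities; the isolated non-weighted-homogeneous ones ($C_{k,l}$, $yC_{k,l}$, $D_{k,l}$, $F_{k,l}$, $S_k$) may still occur, and for those the paper computes only $\dim H^4_p(Y)$ via Brieskorn's method, which, as the introduction states, ``does not produce the Hodge filtration''; what is checked with \texttt{gmssing.lib} in the paper's proof is purity of the \emph{weight} filtration, not the vanishing of $h^{3,1}_p$. So your step (3) claim that every local group equals $\C(-2)^k$ is unproved exactly in the cases your hypothesis does not exclude, and your strictness argument then yields at best purity of weight $4$, not of type $(2,2)$.

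The second gap is the exact sequence you take as your starting point. Within this paper the identification of $h^4(Y)_{\prim}$ with $\dim\coker\left(F^2H^4(\Ps\setminus Y,\C)\to\oplus_p H^4_p(Y,\C)\right)$ is derived in Section~\ref{secMWrank} \emph{under the hypothesis} that $H^4(Y,\C)$ carries a pure weight-$4$ Hodge structure (and uses Proposition~\ref{prpSurj}); using it to prove Lemma~\ref{lemPure} is therefore circular unless you supply an independent construction of that sequence as a sequence of mixed Hodge structures. The local cohomology sequence on $Y$ itself, which the paper uses, needs no such input. I would also caution that $F^2H^4(\Ps\setminus Y,\C)$ is a step of the Hodge filtration on a mixed Hodge structure whose weights can exceed $4$, so calling it ``pure of type $(2,2)$'' is not meaningful as stated; fortunately the Hodge type of the source is irrelevant to the type of a cokernel, which is another indication that the real work lies in the two points above rather than in your step (2).
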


\begin{proof} Consider the exact sequence
\[\dots \to \oplus_{p\in \Sigma} H^ 4_p(Y) \to H^ 4(Y) \to H^4(Y^ *). \]
We start by proving that the mixed Hodge structure on $H^4(Y)$ is pure of weight 4.
Since $Y^ *$ is smooth, it follows that $H^4(Y^ *)$ has only Hodge weights $\geq 4$, whereas $H^4(Y)$ has only Hodge weights $\leq 4$, since $Y$ is proper (both statements can be found in \cite[Section 5.3]{PSbook}). Hence to prove the above claim, it suffices to prove that the Hodge structure on $H^4_p(Y)$ is of pure weight 4.

Suppose $p\in Y_{\sing}$ and suppose we have a weighted homogeneous singularity at $p$. Then by the results of Dimca \cite{DimBet} and of \cite{ell3HK}, it follows that $H^4_p(Y)$ has pure weight 4.
If $(Y,p)$ is not weighted homogeneous then this singularity is isolated. The procedures in the Singular library \verb!gmssing.lib! allow us to calculate the weight filtration on $H^4_p(Y)$. It turns out that for all singularities mentioned in Theorem~\ref{ThmSextic} the Hodge structure on $H^4_p(Y)$ is pure of  weight 4. From this it follows that $H^4(Y)$ is pure of weight 4.

In order to prove that $H^4(Y)$ is pure of  type $(2,2)$, consider  $f:\tilde{Y}\to Y$, a resolution of singularities of $Y$. Let $\ell\subset \Ps^2$ be a general line. Then  $Y_\ell:= f^{-1}(\psi^{-1}(\ell))$ is irreducible and is a rational elliptic surface. Moreover, since $\ell$ is ample, we have by Lefschetz' hyperplane theorem that  $H^2(\tilde{Y})\to H^2(Y_\ell)$ is injective. From the rationality of $Y_\ell$  it follows that $h^{2,0}(Y_\ell)=0$ and therefore $h^{2,0}(\tilde{Y})=0$. Using Poincar\'e duality one obtains $h^{3,1}(\tilde{Y})=h^{1,3}(\tilde{Y})=0$. In particular $H^{2,2}(\tilde{Y})=H^4(\tilde{Y})$.

We have an exact sequence $H^3(E) \to H^ 4(Y) \to H^4(\tilde{Y})$. Since $E$ is proper it turns out that there the graded piece of weight 4  in $H^ 3(E)$ is trivial.  Since $H^4(Y)$ is pure of  weight 4 this exact sequence implies  that $H^4(Y)$ injects in $H^4(\tilde{Y})$. The latter Hodge structure is pure  of type $(2,2)$, so the same holds for $H^4(Y)$.
\end{proof}

\begin{proposition}\label{prpSurj}
Suppose $(Y,p)$ is a semi-weighted homogeneous hypersurface singularity. Then $H^4(\Ps \setminus Y,\C) \to H^4_p(Y)$ is surjective.
\end{proposition}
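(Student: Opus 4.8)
The map $H^4(\Ps\setminus Y)\to H^4_p(Y)$ is the one obtained by restricting a cohomology class to a small (Milnor) ball $B_p$ around $p$ and composing with the identification $H^4(B_p\setminus Y)\cong H^4_p(Y)$ recalled in \cite{ell3HK}; it is a morphism of mixed Hodge structures. The first step of the plan is a reduction to the $F^2$-part. Since in all the cases that occur we have $R(f_p)_{d_p-w_p}=R(f_p)_{3d_p-w_p}=0$, the target $H^4_p(Y)$ is pure of weight $4$ and of type $(2,2)$, so by strictness of morphisms of mixed Hodge structures — and because $H^4(\Ps\setminus Y)$ has weights $\geq 4$ — the image of $H^4(\Ps\setminus Y)$ coincides with the image of $\gr^W_4 H^4(\Ps\setminus Y)$, hence with the image of $F^2H^4(\Ps\setminus Y)$. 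Thus it is enough to show $F^2H^4(\Ps\setminus Y)\to H^4_p(Y)$ is onto, i.e. to work with forms of pole order $\leq 2$.

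Next I would pass to Jacobian rings on both sides. By the Poincar\'e residue, $F^2H^4(\Ps\setminus Y)$ is the image of $\C[z_0,z_1,z_2]_2\,x\oplus\C[z_0,z_1,z_2]_4$, that is, of the degree-$4$ piece of the global Jacobian ring $R(F)=\C[x,y,z_0,z_1,z_2]/(F_x,F_y,F_{z_0},F_{z_1},F_{z_2})$. On the local side, since $(Y,p)$ is weighted homogeneous (by the footnote, for the singularities occurring here semi-weighted homogeneous implies weighted homogeneous), Dimca's method identifies $H^4_p(Y)$ with $R(f_p)_{2d_p-w_p}$ for a weighted-homogeneous local equation $f_p$. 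Working in the affine chart $z_0=1$ with $q=\psi(p)=(1:0:0)$ and in coordinates bringing $f_p$ into its weighted-homogeneous normal form, the comparison map sends the residue of $A\,\Omega/F^2$ to the class in $R(f_p)_{2d_p-w_p}$ represented by the restriction of $A$. So the statement to be proved becomes purely algebraic: the classes of the restrictions of the degree-$4$ numerators span $R(f_p)_{2d_p-w_p}$.

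Finally I would compare the two Jacobian ideals. In the case $y^2=x^3+R$ (the cases $y^2=x^3+Qx$ and $y^2=x^3+AP^2x+BP^3$ being entirely parallel) one has $R(F)=\C[x]/(x^2)\otimes M(R)$ with $M(R)=\C[z_0,z_1,z_2]/(\partial R)$; localizing $M(R)$ at $q$ and using the Euler relation produces the local Tjurina algebra of $g_q:=R(1,z_1,z_2)$, which equals the Milnor algebra $M(g_q)$ because $g_q$ is weighted homogeneous, so that $R(f_p)\cong\C[x]/(x^2)\otimes M(g_q)$ and the claim reduces to surjectivity of the localization $M(R)\to M(g_q)$ onto the graded piece matched with $R(f_p)_{2d_p-w_p}$. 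I expect this degree bookkeeping to be the main obstacle: one must know that the global degree that is available (the value $2\cdot 6-8=4$, together with the $x$-contribution) is large enough to surject onto the socle-adjacent graded piece $R(f_p)_{2d_p-w_p}$ of the local ring. For the weighted-homogeneous singularities this is checked against the bases for $R(f_p)_{2d_p-w_p}$ exhibited in Section~\ref{secCalc}: in each case every basis element is hit by the restriction of a suitable degree-$4$ numerator, using that the coordinate change bringing $f_p$ to normal form preserves the lowest-weight part; equivalently, the obstruction to surjectivity lies in a Koszul-type cohomology group of $M(R)$ which vanishes for plane curves of degree $6$, respectively $4$, by the same regularity estimates that underlie the exact sequences used in the proof of Lemma~\ref{lemPure}.
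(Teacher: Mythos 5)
Your first two reductions agree with what the paper does: the purity of $H^4_p(Y)$ (type $(2,2)$, since $R(f_p)_{d_p-w_p}=R(f_p)_{3d_p-w_p}=0$ in all relevant cases) lets one replace $H^4(\Ps\setminus Y)$ by its pole-order-two part, and the comparison map is then a map of graded Jacobian rings, landing in $R(f_p)_{2d_p-w_p}$. The gap is that the third step, which you yourself flag as ``the main obstacle,'' is where the entire content of the proposition sits, and neither of the two ways you propose to finish actually closes it. Reading surjectivity off the bases in Section~\ref{secCalc} does not work as stated: the image of $x\,\C[z_0,z_1,z_2]_2\oplus\C[z_0,z_1,z_2]_4$ in $R(f_p)_{2d_p-w_p}$ is controlled by which elements of that graded piece are reachable in \emph{usual} degree at most $4$ in the normal-form coordinates, and the bases exhibited in Section~\ref{secCalc} contain monomials of much higher usual degree (e.g.\ $s^{11}$ for an $A_{17}$ point), so one genuinely has to analyze the comparison map weight system by weight system rather than ``hit each basis element.'' The paper does exactly this: it enumerates all weighted-homogeneous local equations $y^2+x^3+s^\alpha t^\beta+s^\gamma t^\delta$ compatible with lying on a sextic (resp.\ quartic), eliminates the weight systems with $2\mid\gcd(v_1,v_2)$ or $3\mid\gcd(v_1,v_2)$ by the three-weights trick, and verifies the degree criterion by a Singular computation for the remaining ones. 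Crucially, the criterion \emph{fails} for two weight systems ($d=12$, $(v_1,v_2)=(1,3)$ and $(1,4)$), and surjectivity there is rescued only by a separate argument that the target vanishes; your blanket claim does not anticipate that any such exceptional cases exist, let alone handle them.

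Your fallback --- that the obstruction lies in a Koszul-type cohomology group of $M(R)$ which vanishes ``by the same regularity estimates that underlie Lemma~\ref{lemPure}'' --- is unsubstantiated: Lemma~\ref{lemPure} is a purity statement proved via the localization sequence and Lefschetz, and contains no regularity estimate from which such a vanishing would follow. As written, the proposal reduces the proposition to a surjectivity claim about $M(R)\to M(g_q)$ in a specific graded piece that is exactly as hard as the proposition itself, and then asserts it.
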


\begin{proof}
Suppose first that $j=0$. Then there exist positive integers $d$ (divisible by $6$), $v_1$, $v_2$, $\alpha$, $\beta$, $\gamma$, $\delta$ such that $v_1\alpha+v_2\beta=v_1\gamma+v_2\delta=d$, and the $\gcd$ of  $d/6$, $v_1$ and $v_2$ equals $1$ and
$(Y,p)$ is locally given by
$y^2+x^ 3+s^{\alpha}t^\beta+s^{\gamma}t^{\delta}$ plus terms of the same or higher (weighted) degree. Moreover, since $C$ is a sextic we may assume that $\alpha+\beta$ and $\gamma+\delta$ are at most 6.

If both $v_1$ and $v_2$ are divisible by 2 then three of the weights are divisble by $2$ and we can apply the 3 weights trick and obtain that $H^4_p(Y)=0$. The same conclusion holds if both $v_1$ and $v_2$ are divisible by $3$.


For all other choices of $(v_1,v_2)$ we used the computer program Singular to calculate $2d-w$ and $d-w$. If $\C[x,y,s,t]_{2d-w}$ is spanned by elements of (usual) degree at most 4, then the map $H^4(U)\to H^4_p(Y)$ is surjective. The only triples $(d,v_1,v_2)$ not satisfying this criterion are $d=12,v_1=1,v_2=3$ and $d=12,v_1=1,v_2=4$. Since the singularity lies on a sextic it turns out that this corresponds to the singularities
\[ y^ 2=x^3+t^3(t+s^3) \mbox{ resp. } y^ 2=x^ 3+t^2(t+s^4) .\]
For both singularities we know  $H^4_p(Y)=0$. 

The case $j=1728$ can be treated similarly, but turns out to be easier. This finishes the proof.
\end{proof}

Summarizing, we have that  $\rank \MW(\pi)=h^4(Y)-1$, that $h^ 4(Y)-1$ equals the dimension of the cokernel of
\[ H^4(\Ps\setminus Y,\C) \to \oplus_{p\in \Sigma} H^4_p(Y)\]
and that if $\Sigma$ consists of one point at which $Y$ has a weighted homogeneous singularity then this cokernel is trivial.

To calculate in practice the cokernel we might use that this cokernel ifs of pure $(2,2)$-type, hence it suffices to calculate
\[ \coker \left(\Gr_F^2 H^4(U,\C) \to \Gr_F^2 H^ 4_p(Y)\right). \]
In the sequel, we will only  calculate the rank in the case that $(Y,p)$ is weighted homogeneous, hence for the rest of this section assume that $Y$ has only weighted homogenous singularities.
In the previous section we showed for each weighted homogeneous singularity that $H^4_p(Y)$ is pure of type $(2,2)$. Hence it suffices to calculate
\[ \coker \left(\Gr_F^2 H^4(U,\C) \to  H^ 4_p(Y)\right). \]
There is a natural map $\C[x,z,z_0,z_1,z_2]_{4}\to \Gr_F^2 H^4(U,\C)$ given by
\[ g\mapsto \frac{g}{f^2} \Omega. \]
Here $f$ is a defining equation for $Y$ and $\Omega$ is the ``standard'' $4$-form on $\Ps$ (cf. \cite[Section 5]{ell3HK}).
The Jacobian ideal lies in the kernel of this map (see e.g., \cite{DimBet}). 
Since $y$ is in the Jacobian ideal, we get a surjection $\C[x,z_0,z_1,z_2]_4 \to H^ 4(U,\C)$.

The map $H^4(U,\C) \to H^4_p(Y,\C)$ can be calculated as follows. In the previous section we provided generators $g_1,\dots g_k$ for $H^4_p(Y,\C)$.
Now the map $\C[z_0,z_1,z_2]_2x\oplus \C[z_0,z_1,z_2]_4 \to H^4_p(Y,\C)$ is given by
\[G \to \left(\frac{\partial G}{\partial g_1}(p),\dots,\frac{\partial G}{\partial g_k}(p)\right).\]
We can simplify the calculation of the Mordell-Weil rank further: the only interesting cases are $j(\pi)=0,1728$. In that case the fibration with section  has  an extra automorphism, namely \[\omega:(x,y,z_0,z_1,z_2)\to (\omega x,y,z_0,z_1,z_2)\] with $\omega^2=-\omega-1$ (if $j(\pi)=0$ or  \[i:(x,y,z_0,z_1,z_2)\to (-x,iy,z_0,z_1,z_2) \mbox{ if } j(\pi)=1728.\] Let $\sigma$ either be $\omega$ or $i$.
The action of $\sigma$ gives $\MW(\pi)$ the structure of a $\Z[\sigma]$-module. In particular the $\Z$-rank of $\MW(\pi)$ is twice the $\Z[\sigma]$-rank of $\MW(\pi)$.
If we fix a basis  $P_1,\dots,P_r$ for $\MW(\pi)/\MW(\pi)_{\tor}$ as $\Z[\sigma]$-module, then $P_1,\sigma P_1,\dots, P_r,\sigma P_r$ is a basis for $\MW(\pi)/\MW(\pi)_{\tor}$ as $\Z$-module.

Then $\sigma$ acts on  $P_i,\sigma P_i$ as
\[ \left(\begin{matrix} 0 & -1 \\1&-1 \end{matrix}\right) \mbox{resp. } \left(  \begin{matrix} 0 & -1 \\1&0 \end{matrix} \right) \]
This implies that on $\MW(\pi)\otimes_\Z \overline{\Q}$ the only eigenvalues of $\sigma$ are $\omega,\omega^2$ resp. $i,-i$, and the corresponding two eigenspaces have the same dimension.

The automorphism $\sigma$ induces actions on $H^4(Y,\C)_{\prim}$, $H^4_p(Y,\C)$ and the graded piece $\Gr_F^k H^4(U,\C)$. Recall that we are interested in the calculation of the cokernel of
\[ F^3 H^4(U,\C) \to \oplus_{p\in \cP} H^4_p(Y). \]
The cokernel is a direct sum of the two  eigenspaces and both eigenspaces have the same dimension. Hence it suffices the calulate the dimension of the $\omega^2$ (resp. $i$) eigenspace of the cokernel.

Since $\sigma(\Omega)=\omega\Omega$ if $j(\pi)=0$ (resp. $-i \Omega$ if $j(\pi)=1728$) and $F^3H^4(U,\C)$ is a quotient of \[(x\C[z_0,z_1,z_2]_2 \oplus \C[z_0,z_1,z_2]_4)\cdot \frac{1}{f^2} \Omega,\] it follows that the $\omega^2$-eigenspace, respectively,  the $i$ eigenspace is the co-kernel of
\[ x\C[z_0,z_1,z_2]_2 \cdot \frac{1}{f^2} \Omega \to \oplus H^4_p(Y,\C)^{\sigma-\omega^2, \sigma+i}.\]

At the level of the local cohomology the same phenomena happens i.e., $\sigma$ acts on monomials of the form $xh(t,s)$ as multiplication by $\omega^2$ resp. $i$, and on monomials of the form $h(t,s)$ it acts as $\omega$, resp. $-i$.

\begin{remark} It should be remarked that on $H^4_p(Y)$ the two eigenspaces have the same dimension. However, on $F^3H^4(U,\C)$ the two eigenspaces have different dimensions, namely 6 and 15. For computational reasons we choose to work with the 6-dimensional space.
\end{remark}

\section{Classification I: $j(\pi)=1728$} \label{secj1728}
 This case is rather easy.

\begin{lemma}
Suppose $j(\pi)=1728$. Then $\MW(\pi)_{\tor}\not \cong \Z/2\Z $ if and only if $C$ is a double conic. If $C$ is a double conic then $\MW(\pi)_{\tor}=(\Z/2\Z)^ 2$.
\end{lemma}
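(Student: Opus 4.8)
The plan is to turn the statement into the question of whether $-Q$ is a square, using the list of possible Mordell-Weil groups already obtained in Corollary~\ref{corMWj1728}. By that corollary $\MW(\pi)$ is one of $(\Z/2\Z)\times\Z^{r}$ with $r\in\{0,2,4\}$ or $(\Z/2\Z)^2$, so $\MW(\pi)_{\tor}$ is always either $\Z/2\Z$ or $(\Z/2\Z)^2$. Hence $\MW(\pi)_{\tor}\not\cong\Z/2\Z$ if and only if $\MW(\pi)$ contains a $2$-torsion section other than $(x,y)=(0,0)$, which, via $\MW(\pi)\cong E(\C(z_0,z_1,z_2))$ for $y^2=x^3+Qx$, happens if and only if there is $x_0\in\C(z_0,z_1,z_2)^{*}$ with $x_0^2=-Q$ (equivalently, the quadratic $x^2+Q$ in $x$ splits over the function field).

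The next step is purely algebraic: $-Q$ is a square in the fraction field of $\C[z_0,z_1,z_2]$ if and only if it is the square of a polynomial. Writing a square root as a fraction in lowest terms and using that $\C[z_0,z_1,z_2]$ is a UFD forces the denominator to be a unit; and since $Q$ is a homogeneous quartic, the resulting polynomial is, up to a constant, a homogeneous quadratic form $P$. So the condition becomes $Q=cP^{2}$ with $c\in\C^{*}$, i.e.\ the divisor $C=\{Q=0\}$ equals $2\{P=0\}$.

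It then remains to reconcile "$C=2\{P=0\}$" with the phrase "$C$ is a double conic" as used in Section~\ref{secSing}, i.e.\ to see that $\{P=0\}$ is irreducible. This is where the standing hypotheses enter: if $P$ were a product of two distinct linear forms then $C_{\red}$ would be two lines through a point and $Y$ would be a cone; if $P=\ell^2$ for a linear form $\ell$ then $Q=-\ell^{4}$ would be a non-minimal Weierstrass coefficient and $Y$ would be birational to a product $E\times B$. Both are excluded, so $\{P=0\}$ is a smooth conic and $C$ is a double conic in the required sense. Conversely, if $C$ is a double conic then $Q=cP^{2}$, and $x^{3}+Qx=x\,(x-\sqrt{-c}\,P)(x+\sqrt{-c}\,P)$ splits completely over $\C(z_0,z_1,z_2)$ (as $\sqrt{-c}\in\C$), giving $(\Z/2\Z)^{2}\subseteq\MW(\pi)$; combined with the bound from Corollary~\ref{corMWj1728} this is already the full torsion subgroup, so $\MW(\pi)_{\tor}=(\Z/2\Z)^{2}$.

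The argument is short and elementary; the only delicate point is the last reconciliation step, where one must invoke "$Y$ is neither a cone nor birational to a product" to rule out the degenerate shapes of $P$. I expect that bookkeeping — and the implicit use of the convention from Section~\ref{secSing} that a double conic has smooth reduction — to be the main (and still minor) obstacle.
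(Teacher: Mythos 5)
Your proof is correct, but your ``only if'' direction runs along a genuinely different track from the paper's. The paper argues geometrically: if $\#\MW(\pi)_{\tor}>2$, it restricts to a general line $\ell$, uses the injectivity of the specialization map $\MW(\pi)\to\MW(\pi_\ell)$ (Proposition~\ref{prpSpec}) and the Oguiso--Shioda table for rational elliptic surfaces with $j=1728$ (Proposition~\ref{prpSur1728}) to see that the only factorization type with four torsion sections is $[2,2]$, so $C\cap\ell$ is two double points for general $\ell$ and $C$ is a double conic. You instead translate the existence of a second $2$-torsion section directly into the splitting of $x^2+Q$ over $\C(z_0,z_1,z_2)$, i.e.\ into $-Q$ being a square, and then use the UFD structure of $\C[z_0,z_1,z_2]$ plus homogeneity to force $Q=cP^2$ with $P$ quadratic; the exclusion of $P$ reducible or a squared linear form via the non-cone and non-product hypotheses is exactly the right reconciliation with the paper's convention that a ``double conic'' has smooth reduction. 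Your route is more elementary and self-contained at this step (no specialization, no surface classification needed for the forward implication, though you still lean on Corollary~\ref{corMWj1728} to cap the torsion at $(\Z/2\Z)^2$), and it has the small bonus of producing the correct explicit extra sections $x=\pm\sqrt{-c}\,P$, $y=0$ (the paper's $x=\pm f$ is a typo for $x=\pm if$); what the paper's route buys is uniformity, since the same specialization argument is reused verbatim for the analogous $3$-torsion statement at $j=0$ (Lemma~\ref{lem3tors}).
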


\begin{proof} From Lemma~\ref{lem2tor} it follows that $\Z/2\Z$ is a subgroup of $\MW(\pi)$.
Suppose that $\# \MW(\pi)_{\tor}>2$, then
for a general line $\ell$ the intersection $C\cap\ell$ consists of two points with multiplicity 2 by Proposition~\ref{prpSur1728}. Hence $C$ is a double conic. Conversely, if $C$ is a double conic, then $Y$ is given by $y^2=x^3+f^2x$. This threefold has $x=\pm f,y=0$ and $x=0,y=0$ as sections of order 2. Hence $\MW(\pi)_{\tor}\supset(\Z/2\Z)^2$. From Corollary~\ref{corMWj1728} it follows that then $\MW(\pi)_{\tor}=(\Z/2\Z)^2$.
\end{proof}

\begin{theorem} Suppose $j(\pi)=1728$ and that 
$C_{\red}$ is not the union of lines through one point. Then $\MW(\pi)$ is infinite if and only if $C$ is a quartic with two $A_3$ singularities.

Moreover, we have 
\begin{itemize}
\item $\MW(\pi)\cong \Z/4\Z$ if and only if $C$ is a double conic,
\item $\MW(\pi)\cong \Z/2\Z\times \Z^2$ if and only if $C$ is a quartic with two $A_3$ singularities.
\item $\MW(\pi)\cong \Z/2\Z$ otherwise.
\end{itemize}
\end{theorem}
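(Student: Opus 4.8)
The plan is to determine $\MW(\pi)$ by computing its torsion subgroup and its rank separately. By Corollary~\ref{corMWj1728} we already know that $\MW(\pi)$ is one of $(\Z/2\Z)\times\Z^{r}$ with $r\in\{0,2,4\}$ or $(\Z/2\Z)^2$, so the group is completely determined once we know both $\MW(\pi)_{\tor}$ and $\rank\MW(\pi)$. Throughout I would exploit the complex multiplication $\sigma=i$ and the eigenspace decomposition of Section~\ref{secMWrank}, which reduces every cokernel computation to its $i$-eigenspace.

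The torsion part is immediate from the preceding lemma: $\MW(\pi)_{\tor}=(\Z/2\Z)^2$ exactly when $C$ is a double conic, and $\MW(\pi)_{\tor}=\Z/2\Z$ otherwise, the inclusion of at least one $2$-torsion point being Lemma~\ref{lem2tor}. Hence only the rank remains to be found.

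For the rank, Lemma~\ref{evenrank} forces it to be even and Corollary~\ref{corMWj1728} bounds it by $4$. First I would dispose of the degenerate shapes of $C$: whenever $C$ is a union of lines (necessarily not all through one point, by hypothesis) or is non-reduced other than a double conic (a double line together with a conic), every singularity of $C_{\red}$ is of a type whose local cohomology vanishes, since the $A_k,D_k$ computations give $H^4_p(Y)\ne 0$ only for $k\equiv 3\bmod 4$ while the double-line singularities $s^2t$ and $s^2(s-t^2)$ were shown to have $H^4_p(Y)=0$; so in these cases $\rank\MW(\pi)=0$. For the remaining reduced quartics Lemma~\ref{lemj1728lines} already gives $\rank\MW(\pi)\le 2$, so the rank is $0$ or $2$ and everything reduces to deciding when it equals $2$. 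Here I would use the residue formula
\[\rank\MW(\pi)=\dim\coker\bigl(F^3H^4(\Ps\setminus Y,\C)\to\textstyle\oplus_{p\in\cP}H^4_p(Y)\bigr),\]
passing to the $i$-eigenspace so that the source becomes $x\,\C[z_0,z_1,z_2]_2\cdot\frac{1}{f^2}\Omega$. By the proposition listing the contributing singularities, the only quartic singularities with $H^4_p(Y)\ne 0$ are $A_3$, $A_7$ and $D_7$; and since $\oplus M_i$ embeds in the $\tilde E_7$-lattice, the total Milnor number of contributing points is too small for $A_7$ or $D_7$ to be accompanied by any further contributing singularity, leaving only the configurations: a single $A_3$, $A_7$ or $D_7$, or exactly two $A_3$ points. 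For a single contributing point Proposition~\ref{prpSurj} makes the local map surjective, so the cokernel vanishes and $\rank\MW(\pi)=0$.

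The decisive and hardest case is that of two $A_3$ points. Combining the rank computation with the torsion, one then reads off the classification: the rank is $2$ precisely for the quartic with two $A_3$ singularities, which yields the first assertion that $\MW(\pi)$ is infinite exactly in that case; there rank $2$ and torsion $\Z/2\Z$ give $\MW(\pi)\cong\Z/2\Z\times\Z^2$; in every other admissible case rank $0$ and torsion $\Z/2\Z$ give $\MW(\pi)\cong\Z/2\Z$; and for the double conic the rank is $0$ (Lemma~\ref{lemj1728lines}) while the torsion is supplied by the preceding lemma, which pins $\MW(\pi)$ down completely. The main obstacle is exactly the two-$A_3$ computation: one must establish the geometric fact that a quartic with two $A_3$ singularities is essentially unique, and then verify, using the explicit generators of $H^4_{p_i}(Y)$ from Section~\ref{secCalc} together with the evaluation map of Section~\ref{secMWrank}, that the six-dimensional space $x\,\C[z_0,z_1,z_2]_2$ fails to surject onto $H^4_{p_1}(Y)\oplus H^4_{p_2}(Y)$ on the $i$-eigenspace, leaving cokernel of dimension $1$ there and hence $\rank\MW(\pi)=2$. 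This failure of simultaneous surjectivity — in contrast to the single-point surjectivity of Proposition~\ref{prpSurj} — is the crux of the whole argument.
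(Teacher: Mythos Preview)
Your outline is correct and tracks the paper closely: reduce to torsion plus rank via Corollary~\ref{corMWj1728} and the preceding lemma; identify $A_3$, $A_7$, $D_7$ as the only quartic singularities with nonvanishing $H^4_p(Y)$; use single-point surjectivity (Proposition~\ref{prpSurj}) to force at least two contributing points for positive rank; and observe that the $\tilde E_7$ lattice constraint leaves only the configuration $2A_3$.

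The one substantive difference is how you settle the $2A_3$ case. You propose to compute the $i$-eigenspace cokernel of $x\,\C[z_0,z_1,z_2]_2 \to H^4_{p_1}(Y)\oplus H^4_{p_2}(Y)$ directly and show it is one-dimensional. The paper avoids this: it first argues that a quartic with two $A_3$ singularities is rational (a smooth quartic has $\chi=-4$; two $A_3$'s add $2\cdot 3=6$; hence $\chi(C)=2$ and $h^1(C)=0$), so after a coordinate change $C$ is $z_0^4-z_1^2z_2^2=0$. Then $\sum h^4_p=4$ gives rank $\le 4$, surjectivity at one point cuts the cokernel to $\le 3$, and evenness forces rank $\in\{0,2\}$. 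Instead of computing further, the paper simply \emph{exhibits} the section $x=z_0^2,\ y=z_0z_1z_2$ on $y^2=x^3-(z_0^4-z_1^2z_2^2)x$, which has infinite order, pinning the rank to $2$. Your approach would work and is the natural continuation of Section~\ref{secMWrank}, but producing the explicit section is quicker and sidesteps unpacking the local generators; conversely, your cokernel viewpoint makes it transparent \emph{why} a second $A_3$ is needed, since a single point is already hit surjectively. A minor further difference: you invoke Lemma~\ref{lemj1728lines} for the bound $\le 2$, whereas the paper obtains it from $\sum h^4_p$ together with Proposition~\ref{prpSurj} and parity.
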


\begin{proof}
Suppose $C$ is a quartic with two $ A_3$ singularities. A smooth degree 4 curve has Euler characteristic $-4$. Since the Milnor number of an $A_3$ singularity is 3, we obtain that $C$ has Euler characteristic $-4+6=2$, hence $h^1(C)=0$, because  $h^0(C)=h^2(C)=1$. This implies that $C$ is a rational curve. Hence without loss of generality we may assume that $C$ is given by $z_0^4-z_1^2z_2^2$. It remains to show that
\[y^2=x^3-(z_0^4-z_1^2z_2^2)x\]
has Mordell-Weil rank 2. Since $h^4_p(Y)=2$ and $\Sigma$ consists of two points, we have  $\rank \MW(\pi)\leq 4$. From the surjectivity of $H^4(U)\to H^4_p(Y)$  (Proposition~\ref{prpSurj}) it follows that the cokernel $H^4(U)\to H^4_\Sigma(Y)$ has dimension at most 3, and,  since this dimension is even, it follows that $\rank \MW(\pi)\in \{0,2\}$. Note that $x=z_0^2, y=z_0z_1z_2$ is a point of infinite order. Hence $\rank \MW(\pi)=2$.

Conversely,  we have that $H^4_p(Y,\C)$ is non-zero if and only if $(C,q)$ is an isolated singularity if type $A_3, A_7$ or $D_7$. Since $H^4(U,\C)\to H^4_p(Y,\C)$ is surjective for each such singularity, we need to have at least two singularities for positive rank.
This means that $C$ is a quartic with 2 $A_3$ singularities.

To finish the proof, note that from Corollary~\ref{corMWj1728}  implies that if $\MW(\pi)$ is finite then it is isomorphic to $\Z/2\Z$ or $(\Z/2\Z)^2$. From the previous lemma  it follows that the latter only occurs if  $C$ is a double conic.
\end{proof}

\section{Partial classification: case $j(\pi)=0$ and $C$ is non-reduced} \label{secNonRed}
Suppose $C$ is a non-reduced sextic. 
Consider first the case that $C$ is a reduced quartic with a double line. 
In this case we cannot calculate $H^4_p(Y)$ for six types of  the singularities that occur in this case. For this reason we give  we give a few examples with positive rank.

\begin{example}
Suppose $C$ is the union of a double line $\ell$ and a quartic $Q$. Then $\MW(\pi)$ has rank 2 if one of the following occurs
\begin{itemize}
 \item $C$ has an $E_6$ singularity, and $\ell$ intersects $Q$ with multiplicity 4 in a smooth point or
\item $C$ has two $A_3$ singularities along $\ell$.

\end{itemize}
\end{example}

\begin{proof}
In the first case we may assume that, after a change of coordinates if necessary, $Y$ is given by  $y^2=x^3+z_0^2(z_1^4+z_0z_2^3)$. Since $H^4_\Sigma(Y)$ is four-dimensional, $H^4(U)\to H^4_\Sigma(Y)$ is not the zero map, and the cokernel has even dimension, we have that $\rank \MW(\pi)\in\{0,2\}$. Now $x=z_0z_2$ and $y=z_0z_1^2$ is  a point of infinite order, showing that $\rank \MW(\pi)=2$.

In the case we may assume that, after a change of coordinates if necessary, $Y$ is given by $y^2=x^ 3+z_0^2(z_0^ 4+z_1^2z_2^2)$.
Since  $C$ has two $A_3$ singularities it follows that $H^4_\Sigma(Y)$ is four-dimensional. By the same reasoning as above we have that $\rank \MW(\pi) \in \{0,2\}$. The point $x=z_0z_1z_2, y=z_0z_1^2$ has clearly inifinite order, hence the rank equals 2.\end{proof}

From the results in Section~\ref{secCalc} it follows that there are non-reduced sextics, not being a double line with a quartic, that might yield elliptic threefolds with positive rank. In all these cases it turns out that the rank equals 2.

\begin{theorem} Suppose $C$ is one of the following
\begin{itemize}
\item $C$ is a triple line $\ell$ together with cuspidal cubic $K$, and $\ell$ is a flex line at a smooth point of $K$,
\item $C$ is a conic together with two double lines $\ell_1,\ell_2$, such that the $\ell_{i,\red}$ are tangent to $C$ or
\item $C$ is a conic together $C_1$ with a double conic $C_2$, and  $C_1$ and $C_{2,\red}$ intersect in precisely two points with multiplicity 2.
\end{itemize}
Then $\MW(\pi)=\Z^2$.
\end{theorem}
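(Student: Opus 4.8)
The plan is to run the same three-step argument on each of the three configurations: (i) normalize $Y$ by a coordinate change; (ii) read off $\Sing Y$ and, using the local calculations of Section~\ref{secCalc}, the spaces $H^4_p(Y)$; (iii) combine the resulting estimate on $\coker\!\left(F^2H^4(\Ps\setminus Y,\C)\to\bigoplus_{p\in\cP}H^4_p(Y)\right)$ with the parity constraint of Lemma~\ref{evenrank} and an explicit section to get $\rank\MW(\pi)=2$, and then use Corollary~\ref{corMWj0} and Lemma~\ref{lem3tors} to kill the torsion. Throughout, all non-isolated singularities of $Y$ will turn out to be weighted homogeneous, so Lemma~\ref{lemPure} applies and the rank formula is available.

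For (i): a cuspidal cubic with a flex line at a smooth point is projectively rigid, so I may take $K=\{z_0^3=z_1^2z_2\}$ (cusp at $[0:0:1]$), $\ell=\{z_2=0\}$, hence $R=z_2^3(z_0^3-z_1^2z_2)$; transitivity of $\Aut$ of a smooth conic on pairs of its points makes the second configuration rigid too, and the third carries one parameter that plays no role below. Since $C$ is non-reduced in every case, $Y$ is singular along the whole one-dimensional locus over the non-reduced part of $C$ — transversal type $D_4$ along the triple line, $A_2$ along each double component — and, in the first configuration, also at one isolated point of $Y$ over the cusp of $K$.

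Step (ii) is the heart of the matter. In the first configuration the only points of $\cP$ with $H^4_p(Y)\neq0$ are: the cusp of $K$, where $Y$ is locally $y^2=x^3+(u^3+v^2)$, so $H^4_p(Y)\cong\C(-2)^2$ by Lemma~\ref{lemj0Ak}; and the flex point, where $Y$ is locally $y^2=x^3+t^3(t+s^3)$, again a two-dimensional space by the final lemma of Section~\ref{secCalc}. In the second and third configurations the contributing points are the two tangency points — of the lines with the conic, respectively of the two conics — at which $Y$ is locally $y^2=x^3+s^2(s-t^2)$; this is right-equivalent to $y^2=x^3+t^2(t+s^2)$, so each again contributes $\C(-2)^2$ by the lemma on $y^2=x^3+t^2(t+s^{2k})$ in Section~\ref{secCalc}, whereas every remaining collision point (a double line meeting a double line, etc.), when present, is of type $y^2=x^3+u^2v^2$ or $y^2=x^3+t^2s$ and contributes nothing by Lemma~\ref{lemNull}. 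Hence $\dim\bigoplus_{p\in\cP}H^4_p(Y)=4$ in all three cases. I expect this local bookkeeping to be the delicate step: one has to identify the analytic type of $Y$ at each singular point correctly, and in particular distinguish the tangency cases (which contribute) from the transverse ones (which do not), keeping in mind that non-reducedness of $C$ produces singular points of $Y$ beyond those lying over $\Sing C_{\red}$.

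For (iii): the cokernel has dimension $\le4$ and, by Lemma~\ref{evenrank}, even dimension. Each $Y$ carries a visible section $[z_0:z_1:z_2]\mapsto[f:g:z_0:z_1:z_2]$ with $\deg f=2$, $\deg g=3$ and $g^2=f^3+R$ — for instance $f=-z_0z_2$, $g=\sqrt{-1}\,z_1z_2^2$ in the first model — and it is not $2$-torsion ($g\not\equiv0$) nor, by Lemma~\ref{lem3tors}, $3$-torsion (none of the three curves is a double cubic), hence of infinite order; so $\rank\MW(\pi)\ge1$, and being even it is $\ge2$. To exclude $\rank=4$ it is enough that the restriction map be nonzero: in the first configuration the cusp of $K$ is an isolated semi-weighted homogeneous singularity, so Proposition~\ref{prpSurj} forces the image to surject onto that summand; in the other two one checks directly — a minor variant of Proposition~\ref{prpSurj}, now for the weighted homogeneous non-isolated singularity $y^2=x^3+t^2(t+s^2)$ — that the image of $x\,\C[z_0,z_1,z_2]_2\cdot\Omega/f^2$ is nonzero. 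Therefore $\rank\MW(\pi)=2$. Corollary~\ref{corMWj0} then leaves only $\Z^2$ and $(\Z/3\Z)\times\Z^2$, and Lemma~\ref{lem3tors} excludes the latter, so $\MW(\pi)\cong\Z^2$.
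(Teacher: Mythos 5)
Your proposal is correct and follows essentially the same route as the paper's proof: identify the two contributing singular points, each with $h^4_p(Y)=2$ (cusp plus flex point in the first configuration, the two tangency points in the other two, all other collision points contributing nothing by Lemma~\ref{lemNull}), use the nonvanishing of the restriction map together with evenness of the rank to force $\rank\MW(\pi)\in\{0,2\}$, exhibit a non-torsion section to get rank $2$, and rule out torsion. The only incompleteness is that for the second and third configurations you assert a ``visible'' section with $g^2=f^3+R$ without writing it down; since $\rank\MW(\pi)\ge 2$ in those cases rests entirely on this, you should exhibit them as the paper does (e.g.\ $x=-z_1z_2$, $y=z_0z_1z_2$ for $y^2=x^3+(z_0^2+z_1z_2)z_1^2z_2^2$), noting that your explicit section in the first model does check out.
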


\begin{proof} Using a specialization argument it follows that in all these cases the torsion part is trivial.
In all cases $\Sigma$ consists of two points, and both points have $h^4_p(Y)=2$. The map  $H^4(\Ps\setminus Y)\to H^4_{p_1}(Y) \oplus  H^4_{p_2}(Y) $ is not the zero map by Proposition~\ref{prpSurj}, hence the cokernel has dimension at most 3 and therefore $\rank \MW(\pi)\leq 3$. Since the rank is even, one has  $\rank \MW(\pi)\in \{0,2\}$. In order to prove the results it suffices to give a non-trivial section.

In the first case, without loss of generality we may assume that   $Y$ is given by
\[ y^2=x^3+z_0^3(z_0^2z_1-z_2^3).\]
Then the section $x=z_0z_2$, $y=z_0^2z_1$ is non-torsion. 

In the second case, without loss of generality we may assume that   $Y$ is given by
\[ y^2=x^3+(z_0^2+z_1z_2)z_1^2z_2^2.\]
Then the section $x=-z_1z_2$, $y=z_0z_1z_2$ is non-torsion. 

In the third case, without loss of generality we may assume that  without loss of generality  $Y$ is given by
\[ y^2=x^3+(z_0^2+z_1z_2)(\alpha z_0^2+z_1z_2)^2,\]
with $\alpha \in \C$. The section $x=(\alpha z_0^2+z_1z_2)$, $y=(\sqrt{1-\alpha}) z_0^2(\alpha z_0^2+z_1z_2)$ is non-torsion. 
\end{proof}

\section{Case $j(\pi)=0$ and $C$ is a cuspidal curve} \label{secCusps}

Suppose $C$ is a sextic with only cusps.
It is well-known that $C$ can have at most 9 cusps. Moreover, at most 3 of such cusps can lie on a line and at most 6 of them on a conic.

We need the following lemma:
 \begin{lemma}Let $\{p_1,\dots,p_m \}$, $m\leq 9$ be a set of distinct points in $\Ps^2$, with no four points collinear and no seven points lying on the same conic.
 Let $K$ be the cokernel of  the  evaluation map at $p_1,\dots,p_m$:
\[\phi: \C[z_0,z_1,z_2]_2\to \C^m.\]
Then $\dim K=m-6$ for $m\geq 7$, and $\dim K=0$ for $m\leq 5$.  For $m=6$ we have $\dim K=1$ if all the points lie on a conic, $\dim K=0$ otherwise.
 \end{lemma}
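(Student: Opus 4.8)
The plan is to analyze the evaluation map $\phi: \C[z_0,z_1,z_2]_2 \to \C^m$ by first recording that the source has dimension $6$, and then showing that $\phi$ is as surjective (equivalently, as injective) as the geometry of the point configuration allows. Concretely, $\dim K = m - \rank\phi = m - (6 - \dim\ker\phi)$, so the whole statement reduces to computing $\dim\ker\phi$, i.e.\ the dimension of the space of conics through $p_1,\dots,p_m$. For $m \le 5$ I would note that five general points impose independent conditions on conics (there is a unique conic through five points in general position), but to get the clean statement $\dim K = 0$ for \emph{all} configurations with $m \le 5$ satisfying the hypotheses, I would instead argue directly that $\phi$ is surjective: given $m \le 5$ distinct points with no four collinear, I can for each $p_i$ exhibit a conic (e.g.\ a product of two lines) vanishing at all $p_j$, $j \ne i$, but not at $p_i$ — choosing the two lines to pass through appropriate subsets of the remaining $\le 4$ points, which is possible precisely because no four are collinear. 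That gives $\rank\phi = m$, hence $\dim K = 0$.

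For $m = 6$ the dichotomy is governed by whether the six points lie on a conic. If they do not, the six conditions are independent: one shows $\ker\phi = 0$, because a nonzero conic through all six would be \emph{the} conic containing them, contradiction; hence $\rank\phi = 6$ and $\dim K = 0$. If the six points do lie on a (necessarily unique, by the no-four-collinear and the implicit no-five-on-a-line-forcing-degeneracy reasoning) conic $C_0$, then $\ker\phi$ is spanned by $C_0$, so $\dim\ker\phi = 1$, $\rank\phi = 5$, and $\dim K = 6 - 5 = 1$. The content here is just the standard fact that six points lie on at most a pencil's worth of conics only in degenerate situations excluded by the hypotheses; under ``no four collinear'' the space of conics through six points is at most one-dimensional, and it is exactly one-dimensional iff the points are conconic.

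For $m \ge 7$ I want $\rank\phi = 6$, i.e.\ $\phi$ surjective, equivalently $\dim K = m - 6$. Since the map factors through evaluation at any $7$ of the points, it suffices to treat $m = 7$: if $7$ points with no four collinear and no seven on a conic impose independent conditions on conics, then so do any superset (the rank can only stay at $6$, its maximum). So I would reduce to showing: given $7$ points, no four collinear and not all seven on a conic, the evaluation map $\C[z_0,z_1,z_2]_2 \to \C^7$ is surjective. Equivalently the space of conics through the $7$ points is $0$-dimensional: any conic through $6$ of them lies in a pencil (or is unique), and requiring it to pass through the seventh point cuts this down to $0$ unless the seventh point lies on every conic through the first six — which, combined with ``not all seven conconic,'' forces a contradiction via a case analysis on the degenerate conics in the relevant pencil (the pencil, if $1$-dimensional, has base locus exactly the six points unless four of them are collinear, which is excluded).

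The main obstacle I anticipate is the bookkeeping in the degenerate cases: conics here may be reducible (two lines) or non-reduced (a double line), and the hypotheses ``no four collinear'' / ``no seven on a conic'' are exactly calibrated to kill the bad configurations, so the argument must carefully verify that under these hypotheses the base locus of any relevant pencil of conics is precisely the prescribed points (set-theoretically with the right count), rather than containing a line or extra points. Handling the $m=6$ boundary case cleanly — distinguishing $\dim K = 1$ from $\dim K = 0$ purely in terms of conconicity — is where I would be most careful, since there one must rule out a $2$-dimensional family of conics through six points, which can only happen if the points lie on a line (four collinear, excluded) or similar degeneracies. Everything else is linear algebra on a $6$-dimensional space together with the elementary projective geometry of conics and pencils.
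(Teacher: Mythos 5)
Your proposal is correct and rests on the same rank--nullity skeleton as the paper's proof ($\dim K = m - 6 + \dim\ker\phi$, with the kernel identified as the space of conics through the points), but the subcases are handled somewhat differently. For $m\le 5$ the paper computes $\dim\ker\phi = 6-m$ directly, arguing that the kernel can exceed the expected dimension only if its elements share a common line, which ``no four collinear'' forbids; you instead prove surjectivity by exhibiting, for each $p_i$, a split conic (two lines) through the other points missing $p_i$. Your route is more hands-on and self-contained, though you should actually carry out the pairing argument for $m=5$: the point is that $p_i$ can lie on at most two of the six lines spanned by the remaining four points, and those two lines have disjoint index pairs (else four of the $p_j$ would be collinear), so one of the three pairings works. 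For $m=6$ both arguments coincide, and your attention to the uniqueness of the conic (two distinct conics through six points would share a line, forcing five collinear points) is exactly the subtlety the paper leaves implicit. For $m\ge 7$ you overcomplicate matters: there is no need to reduce to $m=7$ and analyze pencils, since a nonzero element of $\ker\phi$ would be a conic containing seven of the points, which the hypothesis excludes outright; this is the paper's one-line argument. Relatedly, your phrase ``I want $\rank\phi=6$, i.e.\ $\phi$ surjective'' is a slip for $m\ge 7$ (surjectivity onto $\C^m$ is impossible when $m>6$); what you mean, and what you in fact use, is injectivity. None of this is a genuine gap --- the argument goes through --- but the $m\ge 7$ case should be collapsed to the direct observation.
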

\begin{proof}
If $m\geq 7$ then the $m$ points do not lie on a conic, hence the kernel of $\phi$ is trivial and the cokernel has dimension $6-m$.

If $m=6$ and the points do not lie on a conic then the kernel of $\phi$ is again trivial and $\dim K=0$.

If $m=6$ and the points do lie on a conic then the kernel of $\phi$ is one-dimensional and so is the cokernel.

If $m<6$ then $K$ is non-trivial only if the elements in the kernel have a common component. Such a component is necessarily a line and $m\geq 3$. 
A straightforward calculation shows that if $3\leq m \leq 5$ and precisely three of the $m$ points are collinear then the kernel of $\varphi$ has dimension $6-m$, so $\dim K=0$.
\end{proof}

Let $Y$ be an elliptic threefold of the form  $y^2=x^3+f(z_0,z_1,z_2)$ where $f=0$ is a reduced sextic with only cusps as singularities. For each cusp $p_i$ of $f=0$ fix a direction $\ell_i$ such that $C$ intersects $\ell_i$ with multiplicity 3 at $p_i$.

In Lemma~\ref{lemj0Ak}
 we studied the singularity $y^2=x^3+t^3+s^2$. It turns out that $H^4_p(Y)$ is generated by the class of $x$ and $t$.

This implies that we can determine the cokernel of the map $H^4(U,\C) \to H^4_\Sigma(Y)$ as follows:
\[ x\C[z_0,z_1,z_2]_2 \oplus \C[z_0,z_1,z_2]_4 \to (\C^2)^m \]
 where $(xf_2+f_4)$ is mapped to $(f_2(p_i), \frac{\partial}{\partial\ell_i}f_4)$.
 To simplify matters we can decompose the cokernel into eigenspaces for the complex multiplication. One eigenspace is the cokernel of
\[  \C[z_0,z_1,z_2]_4 \to \C^m, f_4\to\frac{\partial}{\partial\ell_i}f_4,\] where the other is the cokernel of
\[ x\C[z_0,z_1,z_2]_2  \to \C^m, xf_2 \mapsto f_2(p_i) \]
By the above lemma, this map has one dimensional cokernel if $m=6$ and the cusps lie on a conic or $m=7$, a two-dimensional cokernel if $m=8$ and a three-dimensional cokernel of $m=9$. The latter case is well known, it means that the curve $C$ is the dual of a smooth cubic.

Since both eigenspaces have equal dimension we obtain the following result.
\begin{theorem}
Let $f=0$ be a reduced sextic, with only cusps as singularities. Suppose the cusps are at $p_1,\dots,p_m$. Then the elliptic threefold
\[ y^2=x^3+f\]
has the following Mordell-Weil group:
\begin{itemize}
\item If $m\leq 5$ or $m=6$ and the $p_i$ do not lie on a conic then $\MW(\pi)=0$.
\item If $m=6$ and the  $p_i$ lie on a conic then $\MW(\pi)=\Z^2$.
\item If $m\geq 7$ then $\MW(\pi)=\Z^{2(m-6)}$.
\end{itemize}
\end{theorem}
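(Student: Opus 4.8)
The plan is to combine the rank formula from Section~\ref{secMWrank} with the dimension count of the preceding lemma, handling torsion separately. First I would note that by Lemma~\ref{lemPure} (all singularities here are cusps $A_2$, hence weighted homogeneous, so $H^4(Y,\C)$ is pure of type $(2,2)$) and by Proposition~\ref{prpSurj} (each $A_2$ singularity being semi-weighted homogeneous), the map $H^4(\Ps\setminus Y,\C)\to \oplus_{p\in\Sigma}H^4_p(Y)$ is surjective at each point, so $\rank\MW(\pi)=h^4(Y)-1$ equals the dimension of the cokernel of this map. Using the eigenspace decomposition for the complex multiplication automorphism $\omega$ described at the end of Section~\ref{secMWrank}, this cokernel splits into two eigenspaces of equal dimension; by Lemma~\ref{lemj0Ak}, for the $A_2$ (i.e.\ $A_k$ with $k=2$) singularity $y^2=x^3+t^3+s^2$ the space $H^4_p(Y)$ is two-dimensional, spanned by the classes of $x$ and $t$, and $\omega$ acts on the $x$-class by $\omega^2$ and on the $t$-class by $\omega$. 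Thus the two eigenspaces of the cokernel are precisely the cokernels of the two evaluation maps displayed just before the theorem: $\C[z_0,z_1,z_2]_4\to\C^m$, $f_4\mapsto(\frac{\partial}{\partial\ell_i}f_4)_i$, and $x\C[z_0,z_1,z_2]_2\to\C^m$, $xf_2\mapsto(f_2(p_i))_i$.

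Next I would compute the dimension of the second evaluation map's cokernel using the preceding lemma applied to the cusp points $p_1,\dots,p_m$. Since $C$ is a sextic with only cusps, it has at most $9$ cusps, at most $3$ on a line, and at most $6$ on a conic, so the hypotheses of that lemma (no four collinear, no seven on a conic) are automatically satisfied. The lemma gives: for $m\le 5$ the cokernel is $0$; for $m=6$ it is $0$ unless the $p_i$ lie on a conic, in which case it is $1$-dimensional; for $m\ge 7$ it has dimension $m-6$. Because the two eigenspaces have equal dimension, the total cokernel dimension is twice this, so $\rank\MW(\pi)=0$ in the first case, $\rank\MW(\pi)=2$ when $m=6$ and the cusps lie on a conic, and $\rank\MW(\pi)=2(m-6)$ when $m\ge 7$. (One should also verify the first evaluation map has the same cokernel dimension; this follows from the equal-eigenspace-dimension statement, but one could alternatively observe directly that $\frac{\partial}{\partial\ell_i}f_4$ depends only on the $1$-jet of $f_4$ at $p_i$ in a fixed direction, and a dimension count parallel to the one in the lemma applies.)

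It remains to show the torsion part of $\MW(\pi)$ is trivial in all three cases. By Lemma~\ref{lem3tors}, $3\mid\#\MW(\pi)_{\tor}$ only if $Y$ has the form $y^2=x^3+g^2$ with $g$ a cubic, i.e.\ $C$ is a double cubic --- but $C$ is reduced here, so $3\nmid\#\MW(\pi)_{\tor}$. By Corollary~\ref{corMWj0}, $\MW(\pi)$ is a subgroup of $\Z^{r_1}$, $(\Z/3\Z)\times\Z^{r_2}$ or $(\Z/2\Z)^2$; combined with $3\nmid\#\MW(\pi)_{\tor}$ and the fact (from the proof of that corollary) that $\Z/2\Z$ alone cannot occur, the only possible torsion is $(\Z/2\Z)^2$, which by the same analysis forces $C$ to be a triple conic --- again excluded since $C$ is reduced with cusps. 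Hence $\MW(\pi)_{\tor}=0$, and the theorem follows: $\MW(\pi)=0$, $\Z^2$, or $\Z^{2(m-6)}$ respectively.

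The main obstacle is confirming that the first eigenspace (the degree-$4$ evaluation map $f_4\mapsto\frac{\partial}{\partial\ell_i}f_4$) genuinely has the same cokernel dimension as predicted by the equal-eigenspace principle, rather than being smaller for some special configuration of cusps. The cleanest route is to invoke the automorphism argument of Section~\ref{secMWrank} directly --- since $\omega$ acts on $H^4_p(Y)$ with the two one-dimensional eigenspaces swapped by no further structure, the global cokernel's $\omega$- and $\omega^2$-eigenspaces are forced to have equal dimension by purity and the representation theory of $\Z[\omega]$-modules --- so the degree-$4$ count need not be performed independently. The only genuine computational content is the lemma on evaluation maps, which is elementary once one uses that $\le 3$ cusps lie on a line and $\le 6$ on a conic.
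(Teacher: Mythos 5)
Your proposal is correct and follows essentially the same route as the paper: the eigenspace decomposition of the cokernel under the order-three automorphism, the reduction to the evaluation map $xf_2\mapsto(f_2(p_i))_i$ via the lemma on points in $\Ps^2$, and the equal-eigenspace-dimension principle to avoid computing the degree-$4$ map separately. Your explicit treatment of the torsion (via Lemma~\ref{lem3tors} and Corollary~\ref{corMWj0}, excluding $\Z/3\Z$ and $(\Z/2\Z)^2$ because $C$ is reduced, hence neither a double cubic nor a triple conic) is a welcome addition that the paper leaves implicit.
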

In particular, this shows the existence of the Mordell-Weil groups $\Z^{2r}$ for $r=0,1,2,3$.

\begin{remark} \label{TorRem} Suppose $C$ is a sextic with $9$ cusps. Then $C$ is the dual curve of a smooth cubic. Hence there is a one-dimensional family of sextics with $9$ cusps, and hence a one-dimensional family of elliptic threefolds with Mordell-Weil rank 6. Since the Mordell-Weil rank is six and $H^4(Y)$ is pure of type $2,2$ it follows that $H^4(Y,\Q)=\Q(-2)^7$. All other cohomology groups, except for $H^ 3$, can be calculated using the Lefschetz hyperplane theorem, i.e., $H^{2i}(Y,\Q)=\Q(-i)$ for $i=0,1,3$ and $H^i(Y,\Q)=0$ for $i\not \in \{0,2,3,4,6\}$.

As explained in \cite[Section 3]{ellrat} it follows that $H^3(Y,\Q)=\Q(-1)^{12}$. All cohomology groups have Hodge structures of Tate type, and there is no variation of Hodge structures possibile. In particular, a Torelli type result as obatined by Grooten-Steenbrink \cite[Section 6]{GrSte} in a similar setting is not possible in our case.
\end{remark}

\section{Possible Mordell-Weil groups}\label{SecPos}
In the previous section we have seen the existence of the groups $\Z^{2r}$ for $r=0,1,2,3$. In order to prove Theorem~\ref{MWGrpThm} we have to show the existence of the groups $\Z/3\Z, (\Z/2\Z)^2$.

\begin{remark} We have that  $\Z/3\Z\subset \MW(\pi)$ if and only if $Y$ has an equation of the form
\[y^2=x^3+f^2\]
where  $f=0$ is a cubic.
We showed Lemma~\ref{lem3tors} that then $\MW(\pi)=\Z/3\Z $ unless $f=0$ is the union of three lines, and since we have excluded the cone construction case, $f=0$ is the union of three lines $\ell_1,\ell_2,\ell_3$ without a common intersection point. That means that $\Sigma$ consists of three points $\{p_1,p_2,p_3\}$ and at each point we have a local equation $y^2=x^3+(ts)^2$.  As explained in Lemma~\ref{lemNull}, we have that  $H^4_{p_i}(Y)=0$, whence $\MW(\pi)=\Z/3\Z$ in this case, and $\Z/3\Z\times \Z^2$ is not possible.
\end{remark}

\begin{remark} Suppose we have that  $\MW(\pi)=\Z^8$. We showed before that than $C$ is a reduced sextic, and is a union of six lines, not through one point. That means that for each $p\in \Sigma$ we have a local equation of the form
 $y^2=x^3+t^m+s^m$ with $2\leq m \leq 5$. For each such singularity we have $H^4_p(Y)=0$,  so if $C$ is the union of lines then $\MW(\pi)$ is finite. This shows that $\Z^8$ is not possible.
\end{remark}

Summarizing we get:
\begin{theorem} Let $y^2=x^3+f$ be an elliptic threefold, $f=0$ is not the union of lines through one point.
\begin{itemize}
\item $\MW(\pi)\cong (\Z/2\Z)^2$ if and only if $f=0$ is triple conic.
\item $\MW(\pi)\cong (\Z/3\Z)$ if and only if $f=0$ is double cubic.
\item Otherwise $MW(\pi)$ is one of $0, \Z^2,\Z^4,\Z^6$, and all these cases occur.
\end{itemize}
\end{theorem}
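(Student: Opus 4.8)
The strategy is to combine the a priori restrictions of Section~\ref{secMWpos} with the local cohomology computations of Section~\ref{secCalc} and the examples of Section~\ref{secCusps}. By Corollary~\ref{corMWj0}, under the standing hypotheses $\MW(\pi)$ is one of $\Z^{r}$ with $r$ even and $r\le 8$, $(\Z/3\Z)\times\Z^{r'}$ with $r'\in\{0,2\}$, or $(\Z/2\Z)^2$. Thus the proof breaks into three tasks: (i) rule out $\Z^8$ and $(\Z/3\Z)\times\Z^2$; (ii) determine precisely when the torsion groups $\Z/3\Z$ and $(\Z/2\Z)^2$ occur; and (iii) realize every surviving group.

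For (i) I would argue as in the two remarks immediately preceding the statement. If $\rank\MW(\pi)=8$, then $C$ is forced to be a reduced sextic that is a union of six lines with no common point, so at every point of $\Sigma$ the local equation of $Y$ is $y^2=x^3+t^m+s^m$ with $2\le m\le 5$; the computations of Section~\ref{secCalc} give $H^4_p(Y)=0$ in each of these cases, hence $h^4(Y)_{\prim}=0$ and $\rank\MW(\pi)=0$, a contradiction. If $(\Z/3\Z)\times\Z^2\subseteq\MW(\pi)$, then Lemma~\ref{lem3tors} forces $Y\colon y^2=x^3+g^2$ with $g$ a cubic and $g=0$ a union of lines, necessarily three non-concurrent ones once the cone construction is excluded; then $\Sigma$ consists of three points, each with local equation $y^2=x^3+(ts)^2$, for which Lemma~\ref{lemNull} gives $H^4_p(Y)=0$, so again $\rank\MW(\pi)=0$, a contradiction. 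Hence $\Z^8$ and $(\Z/3\Z)\times\Z^2$ do not occur.

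For (ii): since $(\Z/3\Z)\times\Z^2$ is now excluded, $\MW(\pi)$ has $3\mid\#\MW(\pi)_{\tor}$ if and only if $\MW(\pi)\cong\Z/3\Z$, and by Lemma~\ref{lem3tors} (together with the first Remark of this section, which handles the residual ``three non-concurrent lines'' case) this holds exactly when $f=0$ is a double cubic. For the $2$-torsion: $\MW(\pi)$ contains $(\Z/2\Z)^2$ iff $\pi$ has full $2$-torsion, iff $x^3+f$ splits over $\C(z_0,z_1,z_2)$; a root of $x^3+f$ must be a polynomial (a pole would violate minimality of the Weierstrass equation), so $f=-h^3$ with $h$ homogeneous of degree $2$, and $h$ reducible would make $f=0$ a union of lines through a point (excluded) while $h$ a perfect square would again violate minimality — so $h$ is a smooth conic and $f=0$ is a triple conic; the converse is immediate. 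Since $(\Z/2\Z)^2$ is the only group in the list of Corollary~\ref{corMWj0} containing $(\Z/2\Z)^2$, we get $\MW(\pi)\cong(\Z/2\Z)^2$ iff $f=0$ is a triple conic, and the triple conic and double cubic cases are visibly mutually exclusive.

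For (iii): if $f=0$ is neither a double cubic nor a triple conic (and not a union of lines through a point), then by (ii) and the exclusion of $\Z/2\Z$ in Corollary~\ref{corMWj0}, $\MW(\pi)$ is torsion-free, hence $\Z^r$ with $r$ even and, by (i), $r\ne 8$; thus $\MW(\pi)\in\{0,\Z^2,\Z^4,\Z^6\}$. All four values are realized by cuspidal sextics (Section~\ref{secCusps}): $m\le 5$ cusps, or $6$ cusps not on a conic, give $0$; $6$ cusps on a conic or $7$ cusps give $\Z^2$; $8$ cusps give $\Z^4$; $9$ cusps (the dual of a smooth cubic) give $\Z^6$ — and such curves are reduced and not unions of lines, so they fall under the ``otherwise'' clause. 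Finally $\Z/3\Z$ is realized by $y^2=x^3+g^2$ with $g$ an irreducible cubic and $(\Z/2\Z)^2$ by $y^2=x^3+h^3$ with $h$ a smooth conic. The main obstacle is task (ii), and specifically the $(\Z/2\Z)^2$ case: one must check that full $2$-torsion genuinely forces the cube of an \emph{irreducible} conic and that no non-reduced configuration other than triple conics and double cubics contributes torsion — this is exactly where minimality of the Weierstrass model and the exclusion of the cone construction are both needed, and it relies on the a priori list of Corollary~\ref{corMWj0} containing no group $(\Z/2\Z)^2\times\Z^r$ or $(\Z/3\Z)\times\Z^2$ with $r>0$.
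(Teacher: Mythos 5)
Your proposal is correct and follows essentially the same route as the paper: it combines the a priori list of Corollary~\ref{corMWj0} with the two remarks excluding $\Z^8$ and $(\Z/3\Z)\times\Z^2$ via the vanishing of local cohomology (Lemma~\ref{lemNull} and the union-of-lines computation), and realizes the remaining groups by cuspidal sextics, triple conics and double cubics. The only addition is that you spell out the $(\Z/2\Z)^2\Leftrightarrow$ triple conic equivalence (via splitting of $x^3+f$ and minimality), which the paper leaves implicit in the proof of Corollary~\ref{corMWj0}; this is a correct completion of the same argument, not a different one.
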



\end{document}